\documentclass[11pt]{amsart}

\usepackage{amssymb}
\usepackage{enumerate}
\usepackage{hyperref}
\usepackage{textcomp}
\usepackage{filecontents}

\begin{filecontents}{jobname.bbl}

@Article{ABT,
author = {Akalan, E. and Birkenmeier, G.F. and Tercan, A.},
title = {Goldie extending modules},
journal = {Comm. Algebra.},
year = {2009},
volume = {37},
pages = {663--683},
note = {Corrigendum.  38:4747-4748, 2010.  Corrigendum.  41: 2005, 2013.}
}

@Article{ABT1,
author = {Akalan, E. and Birkenmeier, G.F. and Tercan, A.},
title = {Characterizations of extending and $\mathcal{G}$-extending generalized triangular matrix rings},
journal = {Comm. Algebra.},
year = {2012},
volume = {40},
pages = {1069--1085},
}

@Article{ABT2,
author = {Akalan, E. and Birkenmeier, G.F. and Tercan, A.},
title = {Goldie extending modules},
journal = {Comm. Algebra.},
year = {2009},
volume = {37},
pages = {663--683},
note = {Corrigendum.  38:4747-4748, 2010.}
}

@Article{ABT3,
author = {Akalan, E. and Birkenmeier, G.F. and Tercan, A.},
title = {Goldie extending modules},
journal = {Comm. Algebra.},
year = {2009},
volume = {37},
pages = {663--683},
note = {Corrigendum.  41: 2005, 2013.}
}

@book{Ba,
author = {Baer, R.},
title = {Linear Algebra and Projective Geometry},
publisher = {New York, Academic Press},
year = {1952},
}

@misc{Be,
      author = {Berberian, S.},
      title = {Baer and {B}aer *-rings},
      howpublished={\url{http://www.ma.utexas.edu/mp_arc/c/03/03-181.pdf}},
      note = {Third English Version. 2003},
}

@Article{Bern,
author = {Bernhardt, R.L.},
title = {Splitting hereditary torsion theories over semiperfect rings},
journal = {Proc. Amer. Math. Soc.},
year = {1969},
volume = {22},
pages = {681--687},
}

@book{BKN,
author = {Bican, L. and Kepka, T. and Nemec, P.},
title = {Rings, Modules, and Preradicals},
publisher = {New York, Marcel Dekker},
year = {1982},
}

@Article{BKP,
author = {Birkenmeier, G.F. and Kim, J.Y. and Park, J.K.},
title = {Quasi-{B}aer Ring Extensions and Biregular Rings},
journal = {Bull. Austral. Math. Soc.},
year = {1999},
volume = {61},
pages = {39--53},
}

@Article{BL,
author = {Birkenmeier, G.F. and LeBlanc, R.L.},
title = {s.Baer and s.Rickart Modules},
journal = {J. Algebra Appl.},
year = {2015},
volume = {14},
note = {DOI: 10.1142/S0219498815501315},
}

@book{BM,
author = {Blecher, D.P. and Le Merdy, C.},
title = {Operator Algebras and Their Modules: An Operator Space Approach},
publisher = {New York, Oxford University Press},
series = {London Mathematical Society Monographs, New Series 30},
year = {2004},
}

@Article{BMR,
shorthand = {BMR},
author = {Birkenmeier, G.F. and Mueller, B.J. and Rizvi, S.T.},
title = {Modules in which every fully invariant submodule is essential in a direct summand},
journal = {Comm. Algebra},
year = {2002},
volume = {30},
pages = {1395--1415},
}

@book{BPR,
author = {Birkenmeier, G.F. and Park, J.K. and Rizvi, S.T.},
title = {Extensions of Rings and Modules},
publisher = {New York, Birkh\"{a}user},
year = {2013},
}

@Article{CK,
author = {Chatters, A.W. and Khuri, S.M.},
title = {Endomorphism rings of modules over non-singular CS rings},
journal = {Bull. Lond. Math. Soc.(2).},
year = {1980},
volume = {23},
pages = {434--444},
}

@Article{Dick,
author = {Dickson, S.E.},
title = {A torsion theory for abelian categories},
journal = {Trans. Amer. Math. Soc.},
year = {1966},
volume = {121},
pages = {223--235},
}

@book{DHSW,
author = {Dung, N.V. and Huynh, D.V. and Smith, P.F. and Wisbaur, R.},
title = {Extending Modules},
publisher = {Essex, Longman Scientific \& Technical},
series = {Pitman Research Notes in Mathematics},
year = {1994},
}

@Article{E,
author = {Endo, S.},
title = {Note on p.p. rings},
journal = {Nagoya Math. J.},
year = {1960},
volume = {17},
pages = {167--170},
}

@Article{Ev,
author = {Evans, M.W.},
title = {On commutative p.p. rings},
journal = {Pacific J. Math.},
year = {1972},
volume = {41},
pages = {687--697},
}

@book{G,
author = {Goodearl, K.R.},
title = {Ring Theory: Nonsingular Rings and Modules},
publisher = {New York, Marcel Dekker},
year = {1976},
}

@Article{G1,
author = {Goodearl, K.R.},
title = {Von Neumann regular rings: connections with functional analysis},
journal = {Bull. Amer. Math. Soc. (N.S.)},
year = {1980},
volume = {4},
pages = {125--134},
}

@book{G2,
author = {Goodearl, K.R.},
title = {Introduction to Noncommutative Noetherian Rings},
publisher = {New York, Cambridge University Press},
series = {London Mathematical Society, Student Texts 61},
year = {2001},
}

@Article{H,
author = {Hattori, A.},
title = {A foundation of torsion theory for modules over general rings},
journal = {Nagoya Math. J.},
year = {1960},
volume = {17},
pages = {147--158},
}

@book{Hu,
author = {Hungerford, T.W.},
title = {Algebra},
publisher = {New York, Springer},
series = {Graduate Texts in Mathematics 73},
year = {2003},
}

@Article{K,
author = {Kaplansky, I.},
title = {Modules over operator algebras},
journal = {Amer. J. Math.},
year = {1953},
volume = {4},
pages = {839--853},
}

@book{K1,
author = {Kaplansky, I.},
title = {Rings of Operators},
publisher = {New York, W.A.Benjamin},
year = {1968},
}

@book{L,
author = {Lam, T.Y.},
title = {Lectures on Modules and Rings},
publisher = {New York-Heidelberg-Berlin, Springer-Verlag},
series = {Graduate Texts in Mathematics 189},
year = {1999},
}

@book{La,
author = {Lambek, J.},
title = {Lectures on Rings and Modules},
publisher = {New York, AMS Chelsea Publishing},
year = {2009},
}

@Article{Law,
author = {Lawrence, J.},
title = {A singular primitive ring},
journal = {Proc. Amer. Math. Soc.},
year = {1974},
volume = {45},
pages = {59--62},
}

@Article{LC,
author = {Liu, Q. and Chen, J.},
title = {C.P. modules and their applications},
journal = {Turk. J. Math.},
year = {2012},
volume = {40},
pages = {1069--1085},
}

@Article{LRR,
author = {Lee, G. and Rizvi, S.T. and Roman, C.S.},
title = {Rickart modules},
journal = {Comm. Algebra.},
year = {2010},
volume = {38},
pages = {4005--4027},
}

@book{LZ,
author = {Lee, T.K. and Zhou, Y.},
title = {Reduced Modules, Rings, Modules, Algebras, and Abelian Groups},
publisher = {New York, Dekker},
series = {Lecture Notes in Pure and Appl. Math. 236},
year = {2004},
}

@book{P,
author = {Passman, D.S.},
title = {The Algebraic Structure of Group Rings},
publisher = {New York, Dover},
year = {2011},
}

@Article{Rick,
author = {Rickart, C.E.},
title = {Banach algebras with an adjoint operation},
journal = {Ann. of Math. (2)},
year = {1946},
volume = {47},
pages = {528--550},
}

@book{Ro,
author = {Roman, S.},
title = {Advanced Linear Algebra},
publisher = {New York, Springer},
series = {Graduate Texts in Mathematics 135},
year = {2008},
}

@Article{RR,
author = {Rizvi, S.T. and Roman, C.S.},
title = {Baer and quasi-{B}aer modules},
journal = {Comm. Algebra.},
year = {2004},
volume = {32},
pages = {103--123},
}

@Article{RR1,
author = {Rizvi, S.T. and Roman, C.S.},
title = {On $\mathcal{K}$-nonsingular modules and applications},
journal = {Comm. Algebra.},
year = {2007},
volume = {35},
pages = {2960--2982},
}

@Article{RR2,
author = {Rizvi, S.T. and Roman, C.S.},
title = {On direct sums of {B}aer modules},
journal = {J. Algebra},
year = {2009},
volume = {321},
pages = {682--696},
}

@Article{S,
author = {Sandomierski, F.L.},
title = {Semisimple maximal quotient rings},
journal = {Trans. Amer. Math. Soc.},
year = {1967},
volume = {128},
pages = {225--230},
}

@Article{Sm,
author = {Small, L.W.},
title = {Semihereditary rings},
journal = {Bull. Amer. Math. Soc. (N.S.)},
year = {1967},
volume = {73},
pages = {656--658},
}

@book{St,
author = {Stenstr\"{o}m, B.},
title = {Rings of Quotients},
publisher = {New York, Springer},
series = {Grundlehren Der Mathematischen Wissenschaften 217},
year = {1975},
}

@Article{T,
author = {Teply, M.L.},
title = {Some aspects of {G}oldie's torsion theory},
journal = {Pacific J. Math.},
year = {1969},
volume = {29},
pages = {447--459},
}

@Article{T1,
author = {Teply, M.L.},
title = {Torsionfree projective modules},
journal = {Proc. Amer. Math. Soc.},
year = {1971},
volume = {27},
pages = {29--34},
}

@Article{Turn,
author = {Turnidge, D.},
title = {Torsion theories and semihereditary rings},
journal = {Proc. Amer. Math. Soc.},
year = {1970},
volume = {24},
pages = {137--143},
}

@Article{YZ,
author = {Yi, Z. and Zhou, Y.},
title = {Baer and Quasi-{B}aer Properties of Group Rings},
journal = {J. Aust. Math. Soc.},
year = {2007},
volume = {83},
pages = {285--296},
}

\end{filecontents}

\newcommand{\ideal}{\trianglelefteq}
\newcommand{\iso}{\simeq}
\newcommand{\ess}{\leq^{ess}}
\newcommand{\ness}{\nleq^{ess}}
\newcommand{\dsum}{\leq^{\oplus}}
\newcommand{\tor}{\mathcal{T}}
\newcommand{\torf}{\mathcal{F}}
\newcommand{\stor}{\mathcal{T}_\beta}
\newcommand{\storfree}{\mathcal{F}_\beta}
\newcommand{\sB}{\mathfrak{sB}}
\newcommand{\sR}{\mathfrak{sR}}
\newcommand{\sbcore}{\mathfrak{C}_{\mathfrak{sB}}}
\newcommand{\srcore}{\mathfrak{C}_{\mathfrak{sR}}}
\newcommand{\B}{\beta}

\newcommand{\Q}{\mathbb{Q}}

\newcommand{\Z}{\mathbb{Z}}
\newcommand{\modr}{\mathcal{M}od_R}
\newcommand{\rmod}{_R\mathcal{M}od}
\newcommand{\semicentral}{\mathcal{S}_l (R)}
\newcommand{\rsemicentral}{\mathcal{S}_r (R)}

\newtheorem{thm}{Theorem}[section]
\newtheorem{lem}[thm]{Lemma}
\newtheorem{defn}[thm]{Definition}
\newtheorem{ex}[thm]{Example}

\newtheorem{prop}[thm]{Proposition}
\newtheorem{cor}[thm]{Corollary}

\makeatletter
\newcommand{\addresseshere}{%
  \enddoc@text\let\enddoc@text\relax
}
\makeatother

\begin{document}

\title{s.Baer and s.Rickart Modules}

\author[Gary F. Birkenmeier]{Gary F. Birkenmeier\textsuperscript{$\dagger$}}
\address{\textsuperscript{$\dagger$}Department of Mathematics, University of Louisiana at Lafayette}
\email{gfb1127@louisiana.edu}

\author[Richard L. LeBlanc]{Richard L. LeBlanc\textsuperscript{$\bigstar$}}
\address{\textsuperscript{$\bigstar$}Department of Mathematics, Saint Mary's Hall}
\email[Corresponding author]{rleblanc@smhall.org}

\keywords{Baer module, Baer ring, Rickart module, p.p. ring, torsion theory, projective modules, (strong) summand intersection property}

\subjclass[2010]{Primary 16D40, 16S90; Secondary 16E40}

\maketitle

\addresseshere

\begin{abstract}
In this paper, we study module theoretic definitions of the Baer and related ring concepts. We say a module is s.Baer if the right annihilator of a nonempty subset of the module is generated by an idempotent in the ring. We show that s.Baer modules satisfy a number of closure properties. Under certain conditions, a torsion theory is established for the s.Baer modules, and we provide examples of s.Baer torsion modules and modules with a nonzero s.Baer radical. The other principal interest of this paper is to provide explicit connections between s.Baer modules and projective modules. Among other results, we show that every s.Baer module is an essential extension of a projective module. Additionally, we prove, with limited and natural assumptions, that in a generalized triangular matrix ring every s.Baer submodule of the ring is projective. As an application, we show that every prime ring with a minimal right ideal has the strong summand intersection property. Numerous examples are provided to illustrate, motivate, and delimit the theory.
\end{abstract}


\section*{Introduction}
A \textit{Baer} ring is a ring in which the right annihilator of an arbitrary nonempty subset is generated by an idempotent. A more general notion of a Baer ring is that of a \textit{right Rickart} ring where the right annihilator of an arbitrary element is generated by an idempotent. A ring is right Rickart if and only if every principal right ideal is projective. Hence these rings are often referred to as \textit{right p.p.} rings. Baer and Rickart rings have a long history dating back to the 1940s with roots in functional analysis. For more on these topics see \cite{Be}, \cite{BPR}, \cite{E}, \cite{G1}, \cite{H}, \cite{K1}, and \cite{Rick}.

In 1972, Evans defined the p.p. condition in the module setting \cite{Ev}. He called a module a $c.p.$ module if every cyclic submodule is projective. In 2004, Lee and Zhou \cite{LZ} also looked at this same condition for modules but under a different name, p.p. modules. More recently in 2011, the Rickart condition was studied further in the module theoretic setting by Liu and Chen \cite{LC}. Aside from looking at p.p. modules, Lee and Zhou introduced a notion of Baer modules. For them, a module $M_R$ is called $Baer$ if, for any nonempty subset $S$ in $M$, $r_R(S) = eR$ where $e=e^2 \in R$ (also see \cite{K}). Also, in 2004, Rizvi and Roman studied the Baer ring concept in the module theoretic setting with respect to the endomorphism ring in contrast to \cite{Ev} and \cite{LZ} . Consider a right $R$-module $M$ and let $S = End_R(M)$. For \cite{RR}, $M$ is a $Baer$ module if the right annihilator in $M$ of any left ideal of $S$ is generated by an idempotent of $S$.

The notions of a Baer and Rickart module that we shall consider in this paper are exactly the definitions used by Evans, Lee and Zhou, and Liu and Chen. Thus, a module $M_R$ is called \textit{s.Rickart} if, for any $m \in M_R$, $r_R(m) = eR$ for some $e=e^2 \in R$. A module $M_R$ is called \textit{s.Baer} if, for any nonempty subset $S$ of $M$, $r_R(S) = eR$ for $e = e^2 \in R$. To contrast, we denote the Baer module concept of \cite{RR} by \textit{e.Baer}. Note that when $M_R = R_R$ all the aforementioned notions of a Baer module coincide.

In Section \ref{preliminaries}, we investigate a number of closure properties for s.Baer modules: submodules, direct sums, direct products, and module extensions. When $R$ has the SSIP or is orthogonally finite, the classes of s.Baer and s.Rickart modules coincide and are closed under direct products. We determine conditions on (s.Baer) s.Rickart modules which ensure that $R$ has the (S)SIP. For a simple module $M$, $M$ is nonsingular $\iff$ projective $\iff$ s.Rickart $\iff$ s.Baer. Then we characterize the primitive rings which have a faithful simple s.Baer module. Surprisingly, we prove that a right primitive ring with nonzero socle has the SSIP. If $M$ is s.Baer, we show when $Hom(M,-)$ and $Hom(-,M)$ are s.Baer. A ring $R$ is semisimple Artinian if and only if every $R$-module is s.Baer. Finally, we discuss conditions on $R$ such that all nonsingular modules are s.Baer.

In Section \ref{projectivity_section}, we begin exploring connections with projectivity. In particular, every s.Rickart module is an essential extension of a projective module (Theorem \ref{essprojective}). For the main result of the section, we determine a class of generalized triangular matrix rings which have a largest s.Rickart submodule. We also characterize when the s.Rickart submodules of a 2-by-2 upper triangular matrix ring over a domain are projective. 

Developing a torsion theory for the class of s.Baer modules forms the basis of Section \ref{torsion_section}. The class of s.Baer modules is a torsion-free class if and only if it is closed under direct products.  We show, in general, the s.Baer torsion theory is stable but not hereditary and we provide instances when it is hereditary. Our results culminate in the following statement: If $R$ is a semiprime ring which has the SSIP or is orthogonally finite, then every projective module splits into a direct sum of a s.Baer torsion module and a s.Baer torsion-free module.

This research is a part of Richard L. LeBlanc's Ph.D. thesis written under the supervision of Professor Gary F. Birkenmeier. Throughout this paper, all rings are associative with unity and $R$ denotes such a ring. All modules are unital right $R$-modules unless otherwise indicated. $\modr$ ($\rmod$) denotes the category of all right (left) $R$-modules, $\sB \ (\sR)$ the class of all s.Baer (s.Rickart) $R$-modules, and $M_R$ ($_RM$) a right (left) $R$-module. Module homomorphisms are written on the opposite side of the scalar. For $N \subseteq M$, $N_R \leq M_R$, $N_R \leq^{\oplus} M_R$, and $N_R \leq^{ess} M_R$  denote a subset, submodule, direct summand, and an essential submodule of $M$, respectively. We symbolize fully invariant submodules (ideals of $R$) by $N_R \trianglelefteq M_R$ ($I \trianglelefteq R$). The right annihilator in the ring $R$ is written $r_R( - )$ and the left annihilator in the module $M$ is written $l_M(-)$. $T_2 (R)$ is the ring of upper triangular matrices over R and $\langle - \rangle $ is the subring of $R$ generated by $-$. An idempotent $e$ is right (left) semicentral if, for any $x \in R$, $ex = exe$ ($xe = exe$). The set of all right (left) semicentral idempotents is $\mathcal{S}_r(R)$ ($\mathcal{S}_l(R)$) and $\mathcal{B} (R) = \semicentral \cap \rsemicentral$ is the set of central idempotents of $R$. $Z(M)$ and $Z_2(M)$ signify the singular submodule and the second singular submodule of $M$. The injective hull is $E(M)$. A module $M_R$ has the $(S)SIP$ if and only if a (arbitrary) finite intersection of direct summands is again a direct summand. The following result from \cite[Proposition 1.2]{S} will be used implicitly throughout this paper: for $X,Y \in \modr$ if $X \ess Y$, then for all $y \in Y$, $y^{-1}X = \{ r \in R \mid yr \in X \} \ess R_R$. Lastly, undefined notation or terminology can be found in \cite{BM}, \cite{BPR}, \cite{K}, and \cite{L}.

\section{Preliminary Results and Examples} \label{preliminaries}

To distinguish the various notions of the Baer module concept we introduce the following terminology. 

\begin{defn} \label{1stdefn}
(See \cite{Ev},\cite{K},\cite{LZ},\cite{LRR},\cite{RR}) Let $M \in \mathcal{M}od_R$ and let $S = End_R(M)$. 
\begin{enumerate}[(i)]
\item A module $M$ is s.Baer (scalar Baer) if, for any $\varnothing \neq N \subseteq M$, we have that $r_R(N) = eR$ for some $e=e^2 \in R$.

\item A module $M$ is e.Baer (endomorphism Baer) if, for any $N_R \leq M_R$, $l_S(N) = Se$ for some $e=e^2 \in S$.

\item A module $M$ is s.Rickart if, for any $n \in M$, we have that $r_R(n) = eR$ for some $e=e^2 \in R$.

\item A module $M$ is e.Rickart if, for all $\varphi \in S$, $r_M(\varphi) = eM$ for some $e = e^2 \in S$. Note that $r_M(\varphi) = r_M(S\varphi) = ker(\varphi)$.
\end{enumerate}
\end{defn}

By a Zorn's lemma argument every module contains a submodule maximal with respect to being s.Rickart. Clearly, s.Baer implies s.Rickart, and a s.Rickart module is nonsingular.

\begin{ex} \label{prelim_example}
The following examples distinguish s.Baer and e.Baer modules.
\end{ex}
\begin{enumerate}[(i)]

\item Let $R$ be a commutative domain. As we will see in Corollary \ref{baercor}, every submodule of a free module is s.Baer. However, if $R$ is not Pr\"{u}fer, then a free module of finite rank $> 1$ is not e.Baer \cite[Theorem 3.9]{RR1}.	

\item A direct computation will show that $M_\Z = \Q \oplus \Z_p$ is e.Baer since $End_{\Z}(M) \iso \left( \begin{smallmatrix} \Q & 0 \\ 0 & \Z_p \\ \end{smallmatrix} \right)$. We can see $M_\Z$ is not s.Baer since $r_\Z ( (0,1) ) = p \Z$ for any prime $p \in \Z$.

\item Let $R$ be $T_2 ( \Z )$.
\begin{enumerate}
\item If $e = \left( \begin{smallmatrix} 1 & 0 \\ 0 & 0 \\ \end{smallmatrix} \right)$, then $M = eR = \left( \begin{smallmatrix} \Z & \Z \\ 0 & 0 \\ \end{smallmatrix} \right)$ is e.Baer since $End_R(M) \iso \Z$ but not s.Baer since $r_R( \left( \begin{smallmatrix} 2 & 3 \\ 0 & 0 \\ \end{smallmatrix} \right))$ is not generated by an idempotent.
\item Let $K = \left( \begin{smallmatrix} 0 & \Z \\ 0 & \Z \\ \end{smallmatrix} \right)$. Then $End_R(K) \iso \left( \begin{smallmatrix} \Z & \Z \\ \Z & \Z \\ \end{smallmatrix} \right)$ is a Baer ring \cite[p. 17]{K1} and $K$ is retractable. From \cite[Proposition 4.6]{RR}, $K$ is e.Baer. Moreover, $K$ is s.Baer by Theorem \ref{gen_triangular_matrix_ring_thm} (i).
\end{enumerate}
\end{enumerate}


\begin{lem} \label{hered}
Let $M \in \mathcal{M}od_R$. A submodule of a s.Baer (s.Rickart) module $M_R$ is s.Baer (s.Rickart).
\end{lem}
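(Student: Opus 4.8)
The plan is to exploit the fact that the right annihilator $r_R(S)$ of a subset $S$ is an intrinsic invariant of $S$ together with the $R$-action: its definition, $r_R(S) = \{ r \in R \mid sr = 0 \text{ for all } s \in S \}$, makes no reference to the ambient module in which $S$ happens to sit. Concretely, I would let $N_R \leq M_R$ with $M$ s.Baer, and fix an arbitrary nonempty subset $S \subseteq N$. Since $N \subseteq M$, the set $S$ is simultaneously a nonempty subset of $M$, and the subset $r_R(S)$ of $R$ is literally the same whether $S$ is regarded inside $N$ or inside $M$. Applying the s.Baer hypothesis on $M$ to this very subset yields an idempotent $e = e^2 \in R$ with $r_R(S) = eR$, which is exactly the condition required to conclude that $N$ is s.Baer.

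For the s.Rickart case the argument is identical, merely restricted to singleton subsets: for each $n \in N \subseteq M$ the element $n$ also lies in $M$, so the s.Rickart property of $M$ provides an idempotent $e = e^2 \in R$ with $r_R(n) = eR$, and once again this annihilator is computed identically whether $n$ is viewed in $N$ or in $M$. Thus $N$ is s.Rickart.

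I do not expect any genuine obstacle here; the sole point to record is the (immediate) observation that passing to a submodule neither enlarges nor shrinks the right annihilator of a fixed subset. Indeed, one could phrase the entire lemma as the single remark that the s.Baer and s.Rickart conditions are requirements on subsets of the underlying set of the module, and are therefore automatically inherited by any subset that happens to be a submodule.
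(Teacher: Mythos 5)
Your proof is correct and is exactly the argument the paper has in mind; the paper simply states that the result ``follows directly from the definitions,'' and your write-up is the explicit version of that observation, correctly noting that $r_R(S)$ depends only on the subset $S$ and the $R$-action, not on the ambient module.
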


\begin{proof} This result follows directly from the definitions. \end{proof}


\begin{lem} \label{rightlemma} \cite[Lemma 2.2]{LC} Let $M \in \modr$. Then $aR \cap r_R(X) = a\cdot r_R(Xa)$ for any $a \in R$ and $\varnothing \neq X \subseteq M$.
\end{lem}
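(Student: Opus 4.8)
The plan is to establish the set equality $aR \cap r_R(X) = a\cdot r_R(Xa)$ by a direct double-inclusion argument, the only nontrivial ingredient being the associativity of the module action, namely $x(ar) = (xa)r$ for every $x \in X$ and every $r \in R$. Note first that $Xa = \{xa : x \in X\}$ is a well-defined nonempty subset of $M$ (since $M$ is a right module and $X \neq \varnothing$), so that $r_R(Xa)$ makes sense as the right annihilator in $R$ of a subset of $M$.

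For the inclusion $a\cdot r_R(Xa) \subseteq aR \cap r_R(X)$, I would take a typical element $ar$ with $r \in r_R(Xa)$. It lies in $aR$ by definition, and for each $x \in X$ associativity gives $x(ar) = (xa)r = 0$ because $r$ annihilates $Xa$; hence $ar \in r_R(X)$ as well, and therefore $ar \in aR \cap r_R(X)$.

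For the reverse inclusion $aR \cap r_R(X) \subseteq a\cdot r_R(Xa)$, I would take $y \in aR \cap r_R(X)$, write $y = ar$ for some $r \in R$ using membership in $aR$, and then use $xy = 0$ for all $x \in X$ together with the same associativity identity $(xa)r = x(ar) = xy = 0$ to conclude $r \in r_R(Xa)$. This yields $y = ar \in a\cdot r_R(Xa)$, completing the argument.

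There is no real obstacle here; the reasoning is entirely formal. The only point requiring a little care is to keep the two annihilators distinct — the right annihilator in $R$ of the subset $X$ versus that of the translated subset $Xa \subseteq M$ — and to apply the right-module associativity in the correct direction, so that the condition ``$ar$ annihilates $X$'' is matched exactly with the condition ``$r$ annihilates $Xa$.''
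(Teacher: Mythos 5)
Your double-inclusion argument is correct; the paper itself gives no proof, simply citing \cite[Lemma 2.2]{LC}, and your argument is the standard one that reference supplies. Both inclusions are verified properly, with the right-module associativity $x(ar)=(xa)r$ used exactly where needed, so nothing is missing.
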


\begin{lem} \label{rightsummand} Let $M \in \modr$ be s.Baer. Then $eR \cap r_R(X)$ is a direct summand of $R_R$ for any $e = e^2$ and $\varnothing \neq X \subseteq M$. 
\end{lem}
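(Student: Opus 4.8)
The plan is to collapse the intersection to a single principal right ideal by means of Lemma \ref{rightlemma}, and then to show that the resulting generator can be taken to be an idempotent. First I would apply Lemma \ref{rightlemma} with $a = e$ to obtain $eR \cap r_R(X) = e \cdot r_R(Xe)$. Since $X$ is nonempty, $Xe = \{ xe \mid x \in X \}$ is a nonempty subset of $M$, and because $M$ is s.Baer its annihilator has the form $r_R(Xe) = fR$ for some $f = f^2 \in R$. Substituting, this gives $eR \cap r_R(X) = efR$, so the whole problem reduces to proving that $efR$ is a direct summand of $R_R$, which I would accomplish by showing that the element $ef$ is itself idempotent.

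The key step, and the place where the actual content lies, is the verification that $ef = (ef)^2$. I would argue as follows. From $f \in fR = r_R(Xe)$ we get $Xef = 0$. Hence $(Xe)(ef) = Xe^2 f = Xef = 0$, which says precisely that $ef \in r_R(Xe) = fR$. Writing $ef = fr$ for some $r \in R$ and left-multiplying by $f$ yields $fef = f^2 r = fr = ef$, i.e. $fef = ef$. Consequently $(ef)^2 = e(fef) = e(ef) = e^2 f = ef$, so $ef$ is an idempotent. Therefore $eR \cap r_R(X) = efR = (ef)R$ is generated by an idempotent and is a direct summand of $R_R$, as claimed.

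The only subtle point I expect to encounter is the identity $fef = ef$, which is what upgrades the a priori merely principal right ideal $efR$ to a summand; it hinges on recognizing that $ef$ lies back in $r_R(Xe)$ (using $e^2 = e$) and then exploiting the defining property of the generator $f$ of the principal right ideal $fR$. Everything else is formal bookkeeping with idempotents and the definition of right annihilators, and so I do not anticipate any genuine difficulty once this identity is in hand.
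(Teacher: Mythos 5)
Your proof is correct and follows essentially the same route as the paper's: reduce via Lemma \ref{rightlemma} to $e\cdot r_R(Xe) = efR$ with $r_R(Xe)=fR$, observe $ef \in fR$ to get $fef=ef$, and conclude $(ef)^2=ef$. The only cosmetic difference is that you verify $ef\in r_R(Xe)$ by direct computation with $Xe$ whereas the paper notes $e\cdot r_R(Xe)\subseteq r_R(Xe)$; the argument is the same.
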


\begin{proof}
By Lemma \ref{rightlemma}, $eR \cap r_R(X) = e \cdot r_R(Xe)$. Since $M$ is s.Baer, $r_R(Xe) = fR$ for some $f=f^2 \in R$. Now, $e \cdot r_R(Xe) \subseteq r_R(Xe)$. Then $efR = e \cdot r_R(Xe) \subseteq r_R(Xe) = fR$. Thus $ef = fef$ and $(ef)^2 = efef = e(ef) = ef$. Hence, $eR \cap r_R(X) = efR$ is a direct summand of $R_R$.
\end{proof}


\begin{lem} \label{sRickart_class_extensions}
The class of s.Baer (s.Rickart) modules is closed under extensions.
\end{lem}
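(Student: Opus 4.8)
The plan is to read ``closed under extensions'' in the usual sense: given a submodule $N_R \leq M_R$ for which both $N$ and $M/N$ are s.Baer (respectively s.Rickart), I must show that $M$ is itself s.Baer (s.Rickart). I would present the s.Baer case in full, since the s.Rickart case is the same argument carried out with the arbitrary subset replaced by a singleton. Throughout, I would try to express an annihilator taken in $M$ in terms of one annihilator governed by the quotient $M/N$ and one governed by the submodule $N$.

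First I would fix a nonempty subset $S \subseteq M$ and pass to the quotient. Writing $\overline{S}$ for the image of $S$ in $M/N$, the s.Baer hypothesis on $M/N$ supplies an idempotent $f = f^2 \in R$ with $r_R(\overline{S}) = fR$. The useful observation here is that $r_R(\overline{S}) = \{ r \in R \mid sr \in N \text{ for all } s \in S \}$, so in particular $Sf \subseteq N$. This is exactly what lets me feed $Sf$ into the s.Baer hypothesis on $N$, producing an idempotent $g = g^2 \in R$ with $r_R(Sf) = gR$.

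The core of the proof is the identity $r_R(S) = fR \cap r_R(Sf)$. For the inclusion ``$\subseteq$'', any $r$ annihilating $S$ also annihilates $\overline{S}$, hence lies in $fR$ and satisfies $r = fr$, so that $(sf)r = s(fr) = sr = 0$; the reverse inclusion is the same computation read in the other direction. Applying Lemma \ref{rightlemma} with $a = f$ and $X = Sf$, and using $f^2 = f$ to simplify $Sf \cdot f = Sf$, collapses this intersection to $r_R(S) = f \cdot r_R(Sf) = fgR$.

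It remains to verify that $fg$ is idempotent, and this is the step I expect to demand the most care; it mirrors the manipulation already used in Lemma \ref{rightsummand}. I would first note that $r_R(Sf)$ is closed under left multiplication by $f$, since $sf(fr) = sf^2 r = sfr$ for every $s \in S$; hence $fgR = f \cdot r_R(Sf) \subseteq r_R(Sf) = gR$. In particular $fg \in gR$, so $g(fg) = fg$, and therefore $(fg)^2 = f(gfg) = f(fg) = fg$. Thus $r_R(S) = fgR$ is generated by the idempotent $fg$, proving $M$ is s.Baer. Rerunning the four steps with $S = \{m\}$ for a single $m \in M$ yields the s.Rickart statement.
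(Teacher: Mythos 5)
Your proof is correct and follows essentially the same route as the paper's: lift the annihilator through the quotient to get $Sf\subseteq N$, reduce $r_R(S)$ to $fR\cap r_R(Sf)$, and collapse that intersection via Lemma \ref{rightlemma} to an idempotent-generated right ideal $fgR$. The only cosmetic difference is that you redo the idempotent computation for $fg$ inline, whereas the paper delegates it to Lemma \ref{rightsummand}; your handling of the s.Rickart case by specializing to a singleton also matches the paper's intent.
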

\begin{proof}
Let $K_R \leq M_R$ and suppose $K$ and $M/K$ are s.Baer. Now, let $\varnothing \neq S \subseteq  M$ and $\overline{S} = \{ s + K \mid s \in S \}$. Consider $r_R(\overline{S}) = \{ a \in R \ | \ Sa \subseteq K \} = eR$ for some $e=e^2 \in R$. Observe $r_R(S) \subseteq r_R(\overline{S})$, and $Se \subseteq K$. For $\alpha \in r_R(S)\subseteq eR$, $\alpha \in r_R(S) \cap r_R(Se) \subseteq eR \cap r_R(Se)$. By Lemma \ref{rightsummand}, $r_R(S) \subseteq gR$. Observe $Sg = Seg = 0$ so $g \in r_R(m)$. Therefore $r_R(S) = gR$. Hence $M$ is s.Baer. For the s.Rickart case, replace $S$ with $m \in M$ and apply \cite[Lemma 2.3]{LC}.
\end{proof}


\begin{prop} Let $M \in \modr$ and let $\{ M_\alpha \}_{\alpha \in A}$ be a family of modules such that $M_\alpha \iso M$ for all $\alpha \in A$. Then $M$ is s.Baer if and only if $\bigoplus_A M_\alpha$ is s.Baer if and only if $\prod_A M_\alpha$ is s.Baer.
\label{direct_prod_same} \end{prop}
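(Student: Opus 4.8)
The plan is to establish the cycle of implications: $\prod_A M_\alpha$ s.Baer $\Rightarrow$ $\bigoplus_A M_\alpha$ s.Baer $\Rightarrow$ $M$ s.Baer $\Rightarrow$ $\prod_A M_\alpha$ s.Baer. The first two implications I would dispatch immediately using the hereditary behavior already recorded in Lemma \ref{hered}: the direct sum $\bigoplus_A M_\alpha$ is a submodule of the direct product $\prod_A M_\alpha$, and each $M_\alpha$, being a direct summand and hence a submodule of $\bigoplus_A M_\alpha$, is isomorphic to $M$. Thus the s.Baer property of the larger module passes down. I would only need to remark, once, that the s.Baer condition is invariant under isomorphism: an $R$-isomorphism $\psi\colon M_\alpha \to M$ carries a nonempty subset $S$ to $\psi(S)$ with $r_R(S) = r_R(\psi(S))$, so annihilators generated by idempotents are preserved.

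The substance is the remaining implication, that $M$ s.Baer forces $N := \prod_A M_\alpha$ to be s.Baer. The key observation is that the right annihilator of a subset of a module depends only on the scalar action, not on the ambient module. Concretely, for a nonempty $S \subseteq N$, writing each $s \in S$ as a tuple $(s_\alpha)_{\alpha \in A}$ with $s_\alpha \in M_\alpha$, I would compute $r_R(S) = \bigcap_{s \in S} r_R(s) = \bigcap_{s \in S} \bigcap_{\alpha \in A} r_R(s_\alpha)$, since a tuple is annihilated by $a \in R$ exactly when every coordinate is. Fixing $R$-isomorphisms $\phi_\alpha\colon M_\alpha \to M$ and using $r_R(s_\alpha) = r_R(\phi_\alpha(s_\alpha))$, I would assemble the single subset $T = \{\, \phi_\alpha(s_\alpha) : s \in S,\ \alpha \in A \,\} \subseteq M$ and conclude $r_R(S) = r_R(T)$. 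Because $M$ is s.Baer and $T$ is a nonempty subset of $M$, we get $r_R(T) = eR$ for some $e = e^2 \in R$, whence $r_R(S) = eR$ and $N$ is s.Baer.

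I do not anticipate a genuine obstacle. The whole argument rests on the elementary fact that reindexing through isomorphic coordinates and intersecting the resulting annihilators never leaves the realm handled by the hypothesis, precisely because $M$ being s.Baer controls \emph{arbitrary} nonempty subsets. The one point to keep in view is that $A$ may be infinite, so the intersection defining $r_R(S)$ ranges over an arbitrary index set and the assembled $T$ may be an infinite subset of $M$; this is harmless, since the s.Baer condition on $M$ is stated for arbitrary nonempty subsets and $T$ is exactly such a subset. In particular, no finiteness or chain condition on $R$ is required here, in contrast to the direct-product closure results elsewhere in this section that invoke the SSIP or orthogonal finiteness.
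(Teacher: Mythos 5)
Your proof is correct and follows essentially the same route as the paper: the downward implications via Lemma \ref{hered}, and for the substantive direction, collecting all coordinate values of the tuples in $S$ (transported to $M$ via the isomorphisms) into a single nonempty subset $T \subseteq M$ with $r_R(S) = r_R(T)$. The paper does exactly this, identifying $\prod_A M_\alpha$ with $\prod_A M$ and taking the union of the images $f(\alpha)$ over all $f \in S$ and $\alpha \in A$.
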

\begin{proof}
By Lemma \ref{hered}, $\prod_A M_\alpha$ is s.Baer implies $\bigoplus_A M_\alpha$ is s.Baer and so $M_\alpha \iso M$ is s.Baer. Now suppose $M$ is s.Baer and consider $\prod_A M_\alpha$. Since $M_\alpha \iso M$, $\prod_A M_\alpha \iso \prod_A M$. Let $\varnothing \neq S \subseteq \prod_A M$. Then $S \subseteq \{ f: A \rightarrow M \mid f(\alpha) \in M \}$ and define $X$ to be $ \bigcup_{f \in X} \{ f(\alpha) \mid \alpha \in A \}$. Hence $r_R(S) = r_R(X) = eR$ for some $e=e^2 \in R$, since $M$ is s.Baer. Thus $\prod_A M_\alpha$ is s.Baer.
\end{proof}


\begin{prop} \label{finite_dsum} Let $\{ M_\alpha \}_{\alpha \in A}$ be a family of modules where $A$ is a finite index set. Then for every $\alpha \in A$, $M_\alpha$ is s.Baer if and only if $\bigoplus_A M_\alpha$ is s.Baer.
\end{prop}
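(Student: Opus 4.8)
The plan is to prove the nontrivial implication and get the easy one essentially for free. For the forward direction, if $\bigoplus_A M_\alpha$ is s.Baer, then each $M_\alpha$ is isomorphic to a (direct summand, hence) submodule of it, so Lemma \ref{hered} immediately yields that each $M_\alpha$ is s.Baer. This requires nothing beyond the hereditary property already established.

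For the converse, since $A$ is finite I would induct on $|A|$, reducing the whole problem to the case of two summands: if $M_1$ and $M_2$ are s.Baer, then $M_1 \oplus M_2$ is s.Baer. The inductive step writes $\bigoplus_A M_\alpha = \bigl( \bigoplus_{\alpha \neq \alpha_0} M_\alpha \bigr) \oplus M_{\alpha_0}$ and applies the two-summand case, using the induction hypothesis that the first factor is s.Baer; finiteness of $A$ is precisely what makes this terminate (infinite direct sums are treated elsewhere only under extra hypotheses).

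For the two-summand case, fix $\varnothing \neq S \subseteq M_1 \oplus M_2$ and let $S_i$ be the image of $S$ under the $i$-th coordinate projection, a nonempty subset of $M_i$. The key observation is that annihilating an element $(s^{(1)}, s^{(2)})$ is the same as annihilating each coordinate separately, so $r_R(S) = r_R(S_1) \cap r_R(S_2)$. Since $M_1$ is s.Baer, write $r_R(S_1) = e_1 R$ with $e_1 = e_1^2 \in R$; then $r_R(S) = e_1 R \cap r_R(S_2)$, where $S_2$ is a nonempty subset of the s.Baer module $M_2$. Now Lemma \ref{rightsummand}, applied to $M_2$ with the idempotent $e_1$ and the subset $S_2$, shows that $e_1 R \cap r_R(S_2)$ is a direct summand of $R_R$, hence equals $gR$ for some $g = g^2 \in R$. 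Therefore $r_R(S) = gR$ and $M_1 \oplus M_2$ is s.Baer.

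I expect the main (and essentially only) obstacle to be spotting the correct reduction, namely the coordinate decomposition $r_R(S) = r_R(S_1) \cap r_R(S_2)$, which recasts the annihilator as an intersection of exactly the form governed by Lemma \ref{rightsummand}; once that is in hand the argument is mechanical. I would also verify the standard fact that a direct summand of $R_R$ is generated by an idempotent, and confirm that the finiteness of $A$ is genuinely used, so that the statement is not being overclaimed for arbitrary index sets.
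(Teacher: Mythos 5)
Your proposal is correct and follows essentially the same route as the paper: reduce to two summands, write $r_R(S) = r_R(S_1) \cap r_R(S_2)$ via coordinate projections, use that $r_R(S_1) = e_1R$ since $M_1$ is s.Baer, and then invoke Lemma \ref{rightsummand} applied to $M_2$ to conclude that $e_1R \cap r_R(S_2)$ is a direct summand of $R_R$. The paper's proof is just a terser version of exactly this argument (with $S_{f(1)}$, $S_{f(2)}$ playing the role of your $S_1$, $S_2$), so there is nothing further to reconcile.
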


\begin{proof}
By Lemma \ref{hered}, if $\bigoplus_A M_\alpha$ is s.Baer then so are the $M_\alpha$. Now suppose $\{ M_\alpha \}_{\alpha \in A}$ is a family of s.Baer modules. It suffices to show for $A = \{ 1, 2 \}$ that $M_1 \oplus M_2$ is s.Baer. Let $S \subseteq \{ f:A \rightarrow M_1 \cup M_2 \mid f(\alpha) \in M_\alpha \}$ and let $S_{f(1)} = \{ f(1) \mid f \in S \}$. Then $r_R (S_{f(1)}) = eR$ for $e = e^2 \in R$ since $M_1$ is s.Baer. Now $r_R(S) = eR \cap r_R(S_{f(2)}) \leq^{\oplus} R_R$ by Lemma \ref{rightsummand}. Therefore $M_1 \oplus M_2$ is s.Baer.
\end{proof}

In \cite[Proposition 2.22]{RR}, it is shown that a Baer ring has the SSIP. The following example shows there are SIP rings $R$ which are non-SSIP that have a nonzero s.Baer module.

\begin{ex} \label{counterexample} An infinite direct sum of s.Baer modules is not necessarily s.Baer.
\end{ex}

\noindent Let $I=\Z^+$, $M= \bigoplus_I F_i$, and $R = \langle \bigoplus_I F_i ,1 \rangle \subseteq \prod_I F_i$, where $F$ is a field and $F_i = F$ for all $i \in I$. Note that $R$ is a Rickart ring (hence, $R_R$ has the SIP) that is not Baer and $\prod_I F_i$ is a Baer ring. For any $0 \neq m \in M$, we claim that $mR$ is s.Baer. Without loss of generality, $mR = E_kR$ for some $k \in I$, where $E_n = \Sigma_{k=1}^{n} e_{i_k}$ and $e_{i_k}(j) \in R$ is $\delta_{i_kj}$ for $i_k,j \geq 1$. Let $\varnothing \neq S \subseteq mR$ and consider $SR$. Again without loss of generality, $SR = E_sR$ where $E_s$ is an idempotent and $s \leq k$. Then $r_R(S) = (1-E_s)R \Rightarrow mR$ is s.Baer. If $R$ has the SSIP, then $R$ is a Baer ring because $R$ is right Rickart (see \cite[Proposition 2.22]{RR}). But since $R$ is not Baer, $R$ does not satisfy the SSIP. Additionally note that $M$ is a faithful s.Rickart $R$-module but it is not s.Baer \cite[Theorem 2.7]{LC}. Next, let $N = \bigoplus_{i=1}^{\infty} E_iR$. Then $r_R(N) = \bigcap_{i=1}^{\infty} (1-E_i)R \nleq^{\oplus} R$. Thus $N_R$ is not s.Baer.

However our next result guarantees that if there is a nonzero s.Baer module $M \in \modr$, then there exists a nonzero factor  of $R$ that has the SSIP.


\begin{thm} Let $M \in \modr$ be s.Baer. Then:
\begin{enumerate}[(i)]
\item $r_R(M) = eR$ where $e \in \semicentral$.
\item $R / r_R(M) \iso (1-e)R$ as a ring and module where $(1-e) \in \rsemicentral$ and $(1-e)R = (1-e)R(1-e)$.
\item $M$ is a faithful s.Baer $R/r_R(M)$-module.
\item The ring $R/r_R(M)$ has the SSIP as a $R/r_R(M)$-module.
\end{enumerate}
\label{annihilator_ssip}
\end{thm}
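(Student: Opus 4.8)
The plan is to dispatch the four parts in order, the first three being direct consequences of the definitions together with standard semicentral-idempotent bookkeeping, and the last carrying the real content. For (i), I apply the s.Baer hypothesis to the subset $N = M$ itself, obtaining $r_R(M) = eR$ for some $e = e^2 \in R$. The point is that $r_R(M)$ is a two-sided ideal (the annihilator of a module always is), so $eR$ is an ideal and in particular $xe \in eR$ for every $x \in R$. Writing $xe = ea$ and left-multiplying by $e$ gives $exe = e(ea) = ea = xe$, which is precisely the condition $e \in \semicentral$. For (ii), from $e \in \semicentral$ one checks at once that $1-e \in \rsemicentral$, and right semicentrality of $1-e$ forces $(1-e)r = (1-e)r(1-e)$, whence $(1-e)R = (1-e)R(1-e)$; thus $(1-e)R$ is a corner ring with identity $1-e$. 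I then consider $\phi : R \to (1-e)R$, $r \mapsto (1-e)r$. Using $(1-e)r(1-e) = (1-e)r$ one verifies $\phi(rs) = (1-e)r(1-e)s = (1-e)rs = \phi(r)\phi(s)$ and $\phi(1) = 1-e$, so $\phi$ is a unital ring homomorphism; it is also a right $R$-module map, is surjective, and has kernel $\{ r : (1-e)r = 0 \} = eR = r_R(M)$. Hence $R/r_R(M) \iso (1-e)R$ both as a ring and as a module.

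For (iii), since $r_R(M)$ annihilates $M$, the module $M$ is naturally a module over $\overline{R} := R/r_R(M)$, and it is faithful because we have quotiented out precisely its annihilator. To see that $M$ is s.Baer over $\overline{R}$, take $\varnothing \neq N \subseteq M$; as $N \subseteq M$ we have $r_R(M) \subseteq r_R(N)$, so the annihilator of $N$ computed in $\overline{R}$ is $r_{\overline{R}}(N) = r_R(N)/r_R(M)$. By s.Baerness over $R$ write $r_R(N) = fR$ with $f = f^2$; then the image $\overline{f}$ of $f$ in $\overline{R}$ is idempotent and $\overline{f}\,\overline{R} = fR/r_R(M) = r_{\overline{R}}(N)$, as required.

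The crux is (iv), where I exploit that $M$ is a faithful s.Baer module over $\overline{R}$. The key observation is that every direct summand of $\overline{R}_{\overline{R}}$ arises as the annihilator in $\overline{R}$ of a subset of $M$: any summand has the form $\overline{g}\,\overline{R}$ for an idempotent $\overline{g} \in \overline{R}$, and faithfulness gives $r_{\overline{R}}(M(1-\overline{g})) = \overline{g}\,\overline{R}$, since $m(1-\overline{g})r = 0$ for all $m$ forces $(1-\overline{g})r \in r_{\overline{R}}(M) = 0$, i.e. $r = \overline{g}r \in \overline{g}\,\overline{R}$, the reverse inclusion being immediate from $(1-\overline{g})\overline{g} = 0$. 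Now given an arbitrary family $\{ \overline{g_\alpha}\,\overline{R} \}_{\alpha}$ of direct summands, annihilators convert unions into intersections, so $\bigcap_\alpha \overline{g_\alpha}\,\overline{R} = r_{\overline{R}}\big( \bigcup_\alpha M(1-\overline{g_\alpha}) \big)$, and the s.Baer property of $M$ over $\overline{R}$ makes this right-hand side generated by an idempotent, hence a direct summand of $\overline{R}_{\overline{R}}$. This is exactly the SSIP. I expect this identification of summands with annihilators of subsets of $M$ to be the main obstacle, since it is the one place where faithfulness and the full strength of s.Baer (arbitrary subsets, matching the arbitrary intersections demanded by the SSIP) are simultaneously essential; by contrast, Example \ref{counterexample} shows that a merely faithful s.Rickart module does not suffice.
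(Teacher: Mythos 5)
Your proof is correct and takes essentially the same route as the paper's: parts (i)--(iii) are the same standard semicentral-idempotent bookkeeping (which the paper states more tersely), and your key step in (iv) --- using faithfulness to realize each summand $\overline{g}\,\overline{R}$ as $r_{\overline{R}}(M(1-\overline{g}))$ and then applying s.Baerness to the union $\bigcup_\alpha M(1-\overline{g_\alpha})$ --- is exactly the paper's argument. No gaps.
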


\begin{proof}
$(i)$ Since $M$ is s.Baer, $r_R(M) = eR$ where $e \in \semicentral$. 

$(ii)$ Then $T := R / r_R(M)$ is ring and module isomorphic to $(1-e)R = (1-e)R(1-e)$ since $1-e \in \rsemicentral$. We denote an element of $T$ by $\overline{t}$. 

$(iii)$ It is routine to show $M$ is a faithful $T$-module. For $\varnothing \neq S \subseteq M$, we have $r_R(S) = e_sR$ for $e_s = e_s^2 \in R$. Now for $\overline{e_s}T \subseteq T$, $S\overline{e_s}T = S(e_sR + r_R(M)) = Se_sR = 0 \Rightarrow \overline{e_s}T \subseteq r_T(S)$. If $\overline{0} \neq \overline{a} \in r_{T}(S)$, $S\overline{a} = S(a + r_R(M))=Sa = 0$ implies $a \in e_sR$. Hence $M$ is a faithful s.Baer $T$-module. 

$(iv)$ Let $\{ \overline{e}_\alpha \}_{\alpha \in A}$ be an arbitrary family of idempotents of $T$ and let $S = \bigcup_A M(\overline{1}-\overline{e}_\alpha) \subseteq M$. Since $M_T$ is s.Baer, $r_T(S) = \overline{e}T$ for some $\overline{e}=\overline{e}^2 \in T$. Observe for $0 \neq \overline{a} \in r_T(M(\overline{1}-\overline{e}_\alpha))$, $M(\overline{1}-\overline{e}_\alpha)\overline{a} = 0$ implies $(\overline{1}-\overline{e}_\alpha)\overline{a} = 0$, since $M_T$ is faithful. So $r_T(M(\overline{1} - \overline{e}_\alpha)) = \overline{e}_\alpha T$. Hence, $\bigcap_{A} \overline{e}_\alpha T = r_T(S) = \overline{e}T$ for some $\overline{e}=\overline{e}^2 \in T$. Therefore $T_T$ has the SSIP.
\end{proof}


A ring $R$ is Baer if and only if $R$ is right Rickart with the SSIP (see \cite[Proposition 2.22]{RR}).
With $M = R$, this is an illustration of Theorem \ref{annihilator_ssip} while also providing motivation for the following theorem.



\begin{thm} For $0 \neq M \in \mathcal{M}od_R$, consider the following: \label{ssip}
\begin{enumerate}[(i)]
\item $M$ is s.Rickart and $R_R$ has the SSIP.
\item $M$ is s.Baer.
\item $Z(M)=0$ and $r_R(S) \ess eR$ for any $\varnothing \neq S \subseteq M$.
\end{enumerate} 
Then $(i) \Rightarrow (ii) \iff (iii)$. Additionally, if $M$ is faithful, then $(i) \iff (ii)$.
\end{thm}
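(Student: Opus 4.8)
The plan is to establish $(i)\Rightarrow(ii)$, the equivalence $(ii)\iff(iii)$, and finally $(ii)\Rightarrow(i)$ under faithfulness; since $(i)\Rightarrow(ii)$ will be proved unconditionally, the last implication yields $(i)\iff(ii)$ for faithful $M$.

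For $(i)\Rightarrow(ii)$, I would take $\varnothing\neq S\subseteq M$ and write $r_R(S)=\bigcap_{s\in S}r_R(s)$. Since $M$ is s.Rickart, each $r_R(s)=e_sR\dsum R_R$, so $r_R(S)$ is an arbitrary intersection of direct summands of $R_R$; the SSIP makes it a direct summand, hence of the form $eR$ for an idempotent $e$, and $M$ is s.Baer. For $(ii)\Rightarrow(iii)$ the work is minimal: s.Baer implies s.Rickart, and a s.Rickart module is nonsingular (the remark following Definition \ref{1stdefn}), so $Z(M)=0$; moreover $r_R(S)=eR$ is trivially essential in $eR$, giving the second condition with the same idempotent.

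The substance of the theorem lies in $(iii)\Rightarrow(ii)$, where I must upgrade the essential containment $r_R(S)\ess eR$ to an equality using nonsingularity, and this is where I expect the only real difficulty. The plan is to fix $a\in eR$ and invoke the Sandomierski result quoted in the Introduction with $X=r_R(S)\ess eR=Y$ and $y=a$: the right ideal $a^{-1}r_R(S)=\{r\in R\mid ar\in r_R(S)\}$ is then essential in $R_R$. For any $s\in S$ and any $r$ in this essential right ideal, $ar\in r_R(S)$ forces $(sa)r=s(ar)=0$, so $a^{-1}r_R(S)\subseteq r_R(sa)$ and hence $r_R(sa)\ess R_R$, i.e. $sa\in Z(M)=0$. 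Letting $s$ range over $S$ gives $Sa=0$, so $a\in r_R(S)$; thus $eR\subseteq r_R(S)\subseteq eR$ and $r_R(S)=eR$. As $S$ was arbitrary, $M$ is s.Baer.

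Finally, for $(ii)\Rightarrow(i)$ with $M$ faithful: s.Baer gives s.Rickart directly, and faithfulness means $r_R(M)=0$, so $R/r_R(M)=R$. Theorem \ref{annihilator_ssip}(iv) then states that $R/r_R(M)$ has the SSIP as a module over itself, which here is precisely the assertion that $R_R$ has the SSIP, completing $(i)$.
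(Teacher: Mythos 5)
Your proposal is correct and follows essentially the same route as the paper: intersect the summands $r_R(s)$ via the SSIP for $(i)\Rightarrow(ii)$, use the Sandomierski essentiality result plus $Z(M)=0$ to upgrade $r_R(S)\ess eR$ to equality for $(iii)\Rightarrow(ii)$, and invoke Theorem \ref{annihilator_ssip}(iv) with $r_R(M)=0$ for the faithful case. Your write-up of $(iii)\Rightarrow(ii)$ is in fact slightly cleaner than the paper's, which phrases the same computation as a contradiction.
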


\begin{proof}

$(i)\Rightarrow(ii)$ Let $\varnothing \neq S \subseteq M$. Then $r_R(s) = e_sR$ where $e_s = e_s^2$ for all $s \in S$. So $r_R(S) = \bigcap_S r_R(s) = \bigcap_S e_s R = eR$ for some $e = e^2 \in R$ since $R_R$ has the SSIP. Thus, $M$ is s.Baer.

$(ii)\Rightarrow (iii)$ This implication is immediate.

$(iii)\Rightarrow (ii)$ Let $\varnothing \neq S \subseteq M$ and $S \neq \{ 0 \}$. There exists $e=e^2 \in R$ such that $r_R(S) \ess eR$. Then $e^{-1}r_R(S) \ess R_R$ and $s e^{-1}r_R(S) = 0$ for each $s \in S$. Thus, $Z(M) \neq \{0\}$, a contradiction. Therefore, $r_R(S) = eR$ for some $e=e^2 \in R$. Hence, $M$ is s.Baer.

From Example \ref{counterexample}, we see that $(ii) \nRightarrow (i)$, in general. However, if $M$ is faithful, then Theorem \ref{annihilator_ssip} yields $(ii) \Rightarrow (i)$.
\end{proof}

The following theorem generalizes a well known result of Small \cite[Theorem 1]{Sm} that an orthogonally finite right Rickart ring is Baer.


\begin{thm} Let $R$ be orthogonally finite. Then $M_R$ is s.Rickart if and only if $M_R$ is s.Baer. \label{orthog_finite}
\end{thm}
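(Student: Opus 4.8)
The plan is to establish the nontrivial implication, that s.Rickart implies s.Baer over an orthogonally finite ring; the reverse implication holds over any ring since s.Baer trivially implies s.Rickart. So assume $M_R$ is s.Rickart and fix $\varnothing \neq S \subseteq M$. Because $r_R(S)=\bigcap_{s\in S}r_R(s)$, the goal is to show that this (possibly infinite) intersection of idempotent-generated right ideals is again generated by an idempotent.

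First I would dispose of the finite case: for any finite $F=\{s_1,\dots,s_n\}\subseteq M$ the ideal $r_R(F)$ is generated by an idempotent, by induction on $n$. The base case $n=1$ is the s.Rickart hypothesis. Assuming $r_R(s_1,\dots,s_k)=g_kR$ with $g_k=g_k^2$, Lemma \ref{rightlemma} gives $r_R(s_1,\dots,s_{k+1})=g_kR\cap r_R(s_{k+1})=g_k\cdot r_R(s_{k+1}g_k)$, and since $s_{k+1}g_k$ is a single element of $M$, s.Rickartness yields $r_R(s_{k+1}g_k)=fR$ with $f=f^2$. Then $r_R(s_1,\dots,s_{k+1})=g_kfR$, and the computation in Lemma \ref{rightsummand} shows $g_kf$ is idempotent. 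Hence $r_R(F)\dsum R_R$ for every finite $F\subseteq S$.

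It remains to reduce the infinite intersection to a finite one using orthogonal finiteness. Suppose toward a contradiction that $r_R(F)\neq r_R(S)$ for every finite $F\subseteq S$; note $r_R(S)\subseteq r_R(F)$ always. Then, beginning from any singleton, I can repeatedly enlarge the subset: given finite $F_k\subseteq S$ with $r_R(F_k)=g_kR\supsetneq r_R(S)$, there is some $s\in S$ with $g_kR\nsubseteq r_R(s)$, and adjoining $s$ gives $F_{k+1}$ with $r_R(F_{k+1})=g_{k+1}R\subsetneq g_kR$. This produces an infinite strictly descending chain $g_0R\supsetneq g_1R\supsetneq\cdots$ of direct summands of $R_R$. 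Since $g_{k+1}R$ is a summand of $R_R$ contained in the summand $g_kR$, the modular law yields $g_kR=g_{k+1}R\oplus C_{k+1}$ with $0\neq C_{k+1}\dsum g_kR$, and iterating gives $g_0R=g_nR\oplus C_n\oplus\cdots\oplus C_1$ for every $n$. Thus $\{C_k\}_{k\geq 1}$ is an infinite independent family of nonzero direct summands inside $g_0R$, forcing an infinite set of nonzero orthogonal idempotents and contradicting orthogonal finiteness. Therefore $r_R(S)=r_R(F)=gR$ for some finite $F$ and idempotent $g$, so $M$ is s.Baer.

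The main obstacle is converting the strictly descending chain of summands into a genuine infinite orthogonal family of idempotents. The splittings $g_kR=g_{k+1}R\oplus C_{k+1}$ must be organized so that the complements $C_k$ not only are independent but also carry mutually orthogonal idempotent generators; the key observation that makes this work is that, in passing from level $n$ to level $n+1$, only $g_nR$ is refined, so the complement of each fixed $C_i$ is unchanged and its associated idempotent is stable across all levels. This compatible extraction is the crux, and it is exactly where orthogonal finiteness is consumed in upgrading Small's ring-theoretic result to an arbitrary s.Rickart module.
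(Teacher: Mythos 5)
Your proof is correct and follows essentially the same route as the paper: reduce to finite subsets of $S$, whose annihilators are direct summands, and then use orthogonal finiteness to force the infinite intersection to coincide with a finite one. The only difference is that you re-derive the two ingredients the paper cites --- the finite case ([LC, Theorem 2.4], via Lemmas \ref{rightlemma} and \ref{rightsummand}) and the DCC on direct summands ([BPR, Proposition 1.2.13], via your compatible orthogonal idempotents) --- rather than quoting them.
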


\begin{proof}
$(\Rightarrow)$ Let $\varnothing \neq S \subseteq M$ and $0 \neq s \in S$. Then $r_R(S) \subseteq r_R(s) = eR$ for some $e=e^2 \in R$. The set $\mathfrak{X} := \{ r_R(X) \mid X \subseteq S, |X| < \infty \}$ is a poset under set inclusion. Note by \cite[Theorem 2.4]{LC}, that $r_R(X) \leq^{\oplus} R$ for each $X \in \mathfrak{X}$. Since $R$ is orthogonally finite, $R$ has DCC on right direct summands \cite[Proposition 1.2.13]{BPR} hence $\exists Y \in \mathfrak{X}$ such that $r_R (Y)$ is minimal in $\mathfrak{X}$ and $r_R (Y) = cR$ for some $c=c^2 \in R$. Now observe $r_R(S) \subseteq r_R(Y)$ and $r_R(S) = \bigcap_{S} r_R(s)$. If $s \in Y$, $r_R(s) \supseteq r_R (Y)$. If $s \in S - Y, r_R( Y \cup \{ s \} ) = r_R(Y) \cap r_R(s) = cR \cap eR = dR$ for some $d=d^2 \in R$ since $| Y \cup \{ s \} | < \infty$. Furthermore $r_R ( Y \cup \{ s \} ) \subseteq r_R ( Y ) \Rightarrow dR = cR$ since $r_R (Y)$ is minimal. Hence $cR \cap eR = cR \Rightarrow cR \subseteq eR = r_R(s)$. Thus $r_R(S) \subseteq r_R(Y) = cR \subseteq \bigcap_{S} r_R(s)  = r_R(S)$.

$(\Leftarrow)$ This implication is clear.
\end{proof}

Driven by Proposition \ref{finite_dsum} and Example \ref{counterexample}, we seek conditions for an arbitrary direct sum or direct product of s.Baer modules to be s.Baer. 


\begin{thm} \label{dsums}
Let $\{ M_\alpha \}_{\alpha \in A}$ be an indexed set of $R$-modules. Consider the following: 
\begin{enumerate}[(i)]
	\item $M_\alpha$ is a s.Baer $R$-module for every $\alpha \in A$.
	\item $\bigoplus_{A} M_\alpha$ is a s.Baer $R$-module.
	\item $\prod_{A} M_\alpha$ is a s.Baer $R$-module.
\end{enumerate}
Then $(iii) \iff (ii) \Rightarrow (i)$. Additionally if $R_R$ has the SSIP or is orthogonally finite, $(i) \Rightarrow (ii)$.
\end{thm}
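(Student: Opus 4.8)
The plan is to close the cycle $(iii)\Leftrightarrow(ii)\Rightarrow(i)$ with three short arguments and to reserve the real work for $(i)\Rightarrow(ii)$ under the additional hypothesis. For the two containment implications, I would simply invoke Lemma \ref{hered}: each $M_\alpha$ is (isomorphic to) a submodule of $\bigoplus_{A} M_\alpha$, and $\bigoplus_{A} M_\alpha$ is a submodule of $\prod_{A} M_\alpha$, so Lemma \ref{hered} gives at once $(iii)\Rightarrow(ii)\Rightarrow(i)$.

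For $(ii)\Rightarrow(iii)$, I would reuse the coordinate-collapsing device already seen in the proof of Proposition \ref{direct_prod_same}. Given $\varnothing\neq S\subseteq\prod_{A} M_\alpha$, write each $s\in S$ as $(s_\alpha)_{\alpha\in A}$ and let $X\subseteq\bigoplus_{A} M_\alpha$ be the collection of all single-coordinate elements having $s_\alpha$ in position $\alpha$ and $0$ elsewhere, as $s$ runs over $S$ and $\alpha$ over $A$. Since $sr=0$ in the product is equivalent to $s_\alpha r=0$ for every $\alpha$, a direct check gives $r_R(S)=r_R(X)$. As $X$ is a nonempty subset of the s.Baer module $\bigoplus_{A} M_\alpha$, we obtain $r_R(X)=eR$ for some $e=e^2\in R$, whence $r_R(S)=eR$ and $\prod_{A} M_\alpha$ is s.Baer.

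The content lies in $(i)\Rightarrow(ii)$, and here my first step is to observe that $\bigoplus_{A} M_\alpha$ is \emph{always} s.Rickart, with no extra hypothesis. Indeed, any $m\in\bigoplus_{A} M_\alpha$ has finite support and therefore lies in a finite subsum $M_{\alpha_1}\oplus\cdots\oplus M_{\alpha_n}$, which is s.Baer by Proposition \ref{finite_dsum}; since $r_R(m)$ is computed identically in this subsum and in the whole direct sum, and s.Baer implies s.Rickart, we get $r_R(m)=eR$ for an idempotent $e$. With $\bigoplus_{A} M_\alpha$ shown to be s.Rickart, I would finish by splitting on the hypothesis: if $R_R$ has the SSIP, then Theorem \ref{ssip} (the implication $(i)\Rightarrow(ii)$ there) upgrades s.Rickart to s.Baer; if instead $R$ is orthogonally finite, Theorem \ref{orthog_finite} does the same.

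The main obstacle is conceptual rather than computational: one must recognize that the troublesome ``arbitrary subset'' annihilator condition for an infinite direct sum is exactly the ``single element'' condition reinforced by a summand-intersection hypothesis. The finite-support remark together with Proposition \ref{finite_dsum} delivers s.Rickartness for free, and the SSIP (or orthogonal finiteness, via Theorem \ref{orthog_finite}) is precisely what lets one climb from s.Rickart back up to s.Baer for the infinite sum. That this final passage genuinely requires such an assumption is confirmed by Example \ref{counterexample}, where the summands are s.Baer but the infinite direct sum is only s.Rickart.
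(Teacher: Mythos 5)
Your proof is correct and follows essentially the same route as the paper: Lemma \ref{hered} for the downward implications, the coordinate-collapsing argument for $(ii)\Rightarrow(iii)$, and s.Rickartness of the direct sum upgraded to s.Baer via Theorem \ref{ssip} or Theorem \ref{orthog_finite}. The only difference is that you establish s.Rickartness of $\bigoplus_A M_\alpha$ directly from finite support and Proposition \ref{finite_dsum}, where the paper cites an external result of Liu and Chen; your version is self-contained but identical in substance.
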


\begin{proof} In general, we have $(iii) \Rightarrow (ii) \Rightarrow (i)$ by Lemma \ref{hered}.

$(ii) \Rightarrow (iii)$ Let $\varnothing \neq X \subseteq \prod_A M_\alpha$. For each $x \in X$, define $x_\alpha : A \rightarrow \bigcup_A M_\alpha$ where $x_\alpha (i) = x(\alpha)$ when $\alpha = i$ and 0 elsewhere. Then $x_\alpha \in \bigoplus_A M_\alpha$ and the set $S_x := \{ x_\alpha \mid \alpha \in A \} \subseteq \bigoplus_A M_\alpha$. Observe $r_R(x) = r_R(S_x)$. Then $r_R (X) = r_R ( \bigcup_X \{ x \} ) = r_R ( \bigcup_X S_x ) = eR$ since $\bigoplus_A M_\alpha$ is s.Baer.

$(i) \Rightarrow (ii)$ Assume $R_R$ has the SSIP or $R$ is orthogonally finite. From \cite[Theorem 2.7]{LC}, $\bigoplus_A M_\alpha$ is s.Rickart. Thus Theorems 1.11 and 1.12 give us that $\bigoplus_A M_\alpha$ is s.Baer. 

\end{proof}


\begin{cor} \label{baercor}
For a ring R, the following are equivalent:
\begin{enumerate}[(i)]
\item R is a Baer ring.
\item R is a right Rickart ring and every direct product of s.Rickart R-modules is a s.Rickart R-module.
\item $R$ is right Rickart and every direct product of s.Rickart R-modules is an s.Baer R-module.
\item $(\prod_{A} R)_R$ is a s.Rickart module for every set A.
\item  $(\prod_{A} R)_R$ is a s.Baer module for every set A.
\item Every submodule of a free R-module is a s.Baer R-module.
\item Every projective R-module is a s.Baer R-module.
\item Every torsionless R-module is a s.Baer R-module.
 
\end{enumerate}
\end{cor}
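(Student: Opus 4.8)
The plan is to prove all eight conditions equivalent by routing every implication through (i), exploiting the basic fact that $R_R$ is s.Baer precisely when $R$ is a Baer ring, together with the observation that a Baer ring is exactly a right Rickart ring with the SSIP. I would organize the argument into two arcs, each passing through (i), so that strong connectivity (and hence equivalence) is immediate. The first arc handles the ``module'' conditions, (i) $\Rightarrow$ (v) $\Rightarrow$ (viii) $\Rightarrow$ (vi) $\Rightarrow$ (vii) $\Rightarrow$ (i); the second handles the ``Rickart/product'' conditions, (i) $\Rightarrow$ (iii) $\Rightarrow$ (ii) $\Rightarrow$ (iv) $\Rightarrow$ (i).

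For the first arc, (i) $\Rightarrow$ (v) is where I would invoke the main machinery: since $R$ is Baer, $R_R$ is s.Baer and $R_R$ has the SSIP, so Theorem \ref{dsums} gives that an arbitrary direct product of the s.Baer module $R_R$ is again s.Baer, i.e. $(\prod_A R)_R$ is s.Baer. For (v) $\Rightarrow$ (viii) I would use that every torsionless module embeds in some $\prod_A R$, so Lemma \ref{hered} makes it s.Baer. The implications (viii) $\Rightarrow$ (vi) $\Rightarrow$ (vii) are just containments of module classes: a submodule of a free module is torsionless (a free module embeds in the corresponding product of copies of $R$, and submodules of torsionless modules are torsionless), and a projective module is a direct summand, hence a submodule, of a free module. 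Finally (vii) $\Rightarrow$ (i) is immediate, because $R_R$ is projective, so by (vii) it is s.Baer, which says exactly that $R$ is a Baer ring.

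For the second arc, (i) $\Rightarrow$ (iii) again uses the SSIP: since $R$ is Baer it is right Rickart and has the SSIP, so by Theorem \ref{ssip} every s.Rickart module is already s.Baer; applying Theorem \ref{dsums} to a family of such modules then shows any direct product of s.Rickart modules is s.Baer. The step (iii) $\Rightarrow$ (ii) is trivial since s.Baer $\Rightarrow$ s.Rickart, and (ii) $\Rightarrow$ (iv) follows because right Rickartness makes $R_R$ s.Rickart, whence $(\prod_A R)_R$ is s.Rickart by the hypothesis of (ii). The key computation is (iv) $\Rightarrow$ (i): given $\varnothing \neq T \subseteq R$, take the index set $A = T$ and the element $x \in \prod_T R$ with $x(t)=t$; since $xa = 0$ holds iff $ta = 0$ for all $t \in T$, one has $r_R(x) = \bigcap_{t \in T} r_R(t) = r_R(T)$, and (iv) forces this to equal $eR$, so $R$ is Baer.

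Individually the steps are short once the earlier theorems are available, so there is no single deep obstacle; the part demanding the most care is (iv) $\Rightarrow$ (i), where I must choose the index set and the product vector so that the annihilator of a single element reproduces the annihilator of an arbitrary subset of $R$. The secondary point to watch is bookkeeping: I must confirm at each use of Theorem \ref{dsums} and Theorem \ref{ssip} in the (i) $\Rightarrow$ (v) and (i) $\Rightarrow$ (iii) steps that the SSIP hypothesis is genuinely in force, which is exactly what Baerness of $R$ supplies.
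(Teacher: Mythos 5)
Your proposal is correct; every implication goes through. The overall strategy is the same as the paper's (a cycle of implications driven by Theorem \ref{ssip}, Theorem \ref{dsums}, Lemma \ref{hered}, and the embedding of torsionless modules into direct powers of $R$), but your route differs in two respects worth noting. First, the paper does not prove the equivalence of (i), (ii), and (iv) at all --- it simply cites \cite[Theorem 2.9]{LC} --- whereas you supply the missing content yourself: your (iv) $\Rightarrow$ (i) step, taking $A=T$ and $x\in\prod_T R$ with $x(t)=t$ so that $r_R(x)=\bigcap_{t\in T}r_R(t)=r_R(T)$, is exactly the computation that makes the corollary self-contained, and your (i) $\Rightarrow$ (iii) $\Rightarrow$ (ii) $\Rightarrow$ (iv) chain replaces the external reference. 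Second, you organize the proof as two cycles both passing through (i), and you order the module conditions as (v) $\Rightarrow$ (viii) $\Rightarrow$ (vi) $\Rightarrow$ (vii) $\Rightarrow$ (i) (using that submodules of free modules are torsionless), while the paper runs (v) $\Rightarrow$ (vi) $\Rightarrow$ (vii) $\Rightarrow$ (viii) $\Rightarrow$ (i) and reaches (viii) via Proposition \ref{direct_prod_same} applied to $R_R$. Both orderings are valid; yours buys a fully self-contained argument at the cost of a slightly longer implication graph, and the one point to keep explicit is the observation that a free module, and hence any of its submodules, is torsionless because $\bigoplus_A R$ embeds in $\prod_A R$.
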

\begin{proof} (i),(ii), and (iv) are equivalent by \cite[Theorem 2.9]{LC}. 

$(ii)\Rightarrow (iii)$ Since $(i) \iff (ii)$, Theorem \ref{annihilator_ssip} yields that $R_R$ has the SSIP. Now this implication follows from Theorem \ref{ssip}.

$(iii) \Rightarrow (v)$ The proof of this implication is clear. 

$(v) \Rightarrow (vi)$ This implication follows from Lemma \ref{hered} and the fact that every free module is a submodule of some direct product of copies of $R$. 

$(vi) \Rightarrow (vii)$ This follows easily from the hypothesis. 

$(vii) \Rightarrow (viii)$ By assumption, we know that $R_R$ is s.Baer. Then $(\prod_A R)_R$ is s.Baer by Proposition \ref{direct_prod_same}. Given a torsionless $R$-module $M$, $M \hookrightarrow (\prod_A R)_R$. Therefore, $M_R$ is s.Baer by Lemma \ref{hered}. 

$(viii) \Rightarrow (i)$ $R$ is clearly torsionless and thus, by hypothesis, $R_R$ is s.Baer. Hence $R$ is Baer. 
\end{proof}


\begin{prop} \label{essnilp} 
Let R be a ring. If there is a nonzero $M \in \mathcal{M}od_R$ such that $M$ is s.Baer, then R cannot contain an essential nilpotent ideal.
\end{prop}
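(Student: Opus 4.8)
The plan is to argue by contradiction, and the whole proof rests on a single property noted right after Definition~\ref{1stdefn}: every s.Rickart module, hence every s.Baer module, is nonsingular. So I would assume both hypotheses hold simultaneously and derive a contradiction: suppose $R$ contains an ideal $I \ideal R$ with $I \ess R_R$ and $I^n = 0$ for some $n \geq 1$ (reading \emph{essential ideal} as essential as a right ideal, which is the natural convention in this right-module setting), and suppose there exists a nonzero s.Baer module $M \in \modr$. The only feature of $M$ I will actually use is that $Z(M) = 0$; the goal is to show this forces $M = 0$, contradicting $M \neq 0$.

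Fixing an arbitrary $0 \neq m \in M$, I would examine the descending chain $mR \supseteq mI \supseteq mI^2 \supseteq \cdots \supseteq mI^n = 0$, where the final equality is exactly the nilpotency $I^n = 0$. Let $k$ be the least index with $mI^k = 0$; since $m = m\cdot 1 \in mR = mI^0$ is nonzero we have $1 \leq k \leq n$, and by minimality (with the base case $mI^0 = mR \neq 0$ when $k=1$) the module $mI^{k-1}$ is nonzero. Picking any $0 \neq x \in mI^{k-1}$, the fact that $I$ is a right ideal gives $xI \subseteq mI^{k-1}I = mI^k = 0$, so $I \subseteq r_R(x)$. Because $I \ess R_R$, the larger right ideal $r_R(x)$ is again essential, so $x \in Z(M)$; but $Z(M) = 0$ and $x \neq 0$, the desired contradiction. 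As $m$ was arbitrary, this shows $M = 0$.

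I do not anticipate a genuine obstacle: nonsingularity, supplied for free by the s.Baer hypothesis, does all the real work, while the nilpotency of $I$ is precisely what produces a nonzero element of $M$ annihilated by the essential ideal $I$. The only points I would take care to state cleanly are the convention on which side $I$ is essential (so that $r_R(x) \supseteq I$ indeed forces $r_R(x) \ess R_R$) and the degenerate case $k = 1$ of the chain argument, where one simply takes $x = m$. If one prefers to avoid even mentioning $M$ in the reasoning, the same computation shows directly that an essential nilpotent ideal forces every nonsingular right $R$-module to vanish, which is the conceptual content of the proposition.
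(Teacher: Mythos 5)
Your proof is correct and follows essentially the same route as the paper: both descend the chain $SI^k$ (or $mI^k$) to its last nonzero term, observe that $I$ annihilates it, and derive a contradiction from the fact that a right ideal containing the essential ideal $I$ cannot be a proper direct summand. The only cosmetic difference is that you factor the final step through the already-recorded fact that s.Rickart modules are nonsingular, whereas the paper applies the s.Baer definition directly to get $r_R(SI^k)=eR$ and contradicts $SI^k\neq 0$; your phrasing incidentally shows the slightly stronger statement that an essential nilpotent ideal kills every nonzero nonsingular module.
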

\begin{proof}
Let $0 \neq S_R \leq M_R$ and let $I \trianglelefteq R$ such that $I_R \ess R_R$, $I^n=0$, and $I^{n-1} \neq 0$ for some $n \in \mathbb{N}$. Let $0 \leq k \in \mathbb{N}$ be maximal with respect to $SI^k \neq 0$. Then $I \subseteq r_R (SI^k) = eR$ where $e=e^2 \in R $, a contradiction. 
\end{proof}


\begin{ex} The s.Baer concept is independent of the SIP and SSIP.
\end{ex}
\begin{enumerate}[(i)]
\item Every cyclic submodule of $M$ given in Example \ref{counterexample} is s.Baer, but $R_R$ does not have the SSIP.

\item Let $R = T_2 ( \Z )$ and $K = \left( \begin{smallmatrix} 0 & \Z \\ 0 & \Z \\ \end{smallmatrix} \right)$. $K_R$ is a s.Baer submodule that is not faithful and $R_R$ has the SSIP by Lemma \ref{triangular_ssip_lemma}.

\item Here we construct a s.Baer module M over a non-SIP ring $R$. Let $T$ be a commutative Baer ring that is not semisimple. By Proposition \ref{essnilp}, $T$ contains no essential nilpotent ideal. Since $T$ is not semisimple, there exists maximal ideal $P \subseteq T$ that is not a direct summand (hence $P \ess T$) and $P^2 \neq 0$. Now consider $R = \left( \begin{smallmatrix} T / P^2 & P / P^2 \\ 0 & T \\ \end{smallmatrix} \right)$. Observe that $ M_R  = \left( \begin{smallmatrix} 0 & 0 \\ 0 & T \\ \end{smallmatrix} \right)$ is s.Baer. Then for $\overline{0} \neq \overline{p} \in P / P^2$, $\left( \begin{smallmatrix} 0 & \overline{p} \\ 0 & 1 \\ \end{smallmatrix} \right)R \cap \left( \begin{smallmatrix} 0 & 0 \\ 0 & 1 \\ \end{smallmatrix} \right)R = \left( \begin{smallmatrix} 0 & 0 \\ 0 & P \\ \end{smallmatrix} \right)$ is not generated by an idempotent since $P \nleq^{\oplus} T$. Thus $R_R$ does not have the SIP; an example of such an $R$ is $\left( \begin{smallmatrix} \Z_4 & 2 \Z_4 \\ 0 & \Z \\ \end{smallmatrix} \right)$. Finally, note that $M \iso R / r_R(M)$ is the nonzero factor of $R_R$ with the SSIP guaranteed by Theorem \ref{annihilator_ssip}.

\item Motivated by Theorem \ref{dsums}, we show that if a direct product of a family of s.Baer $R$-modules is s.Baer then $R_R$ need not have the SSIP. Fix a prime $p \in \Z$. Let $\mathfrak{P}$ be the set of all prime numbers in $\Z$, $R = \left( \begin{smallmatrix} \Z_{p^2} & p \Z_{p^2} \\ 0 & \Z \\ \end{smallmatrix} \right)$, and $M_q = \left( \begin{smallmatrix} 0 & 0 \\ 0 & \Z \\ \end{smallmatrix} \right)$ for every $q \in \mathfrak{P}$. By Proposition \ref{direct_prod_same}, $\prod_{\mathfrak{P}} M_q$ is s.Baer, but $R_R$ does not have the SSIP (see $(iii)$).  
\end{enumerate}

\begin{defn} $M \in \modr$ is finitely idempotent faithful if, for each nontrivial idempotent $e \in R$ (i.e., $e\neq 0$ and $e \neq 1$), there exists a nonempty finite subset $S \subseteq M$ such that $r_R(S)\cap eR =0$.
\end{defn}

Examples of finitely idempotent faithful modules include modules $M_R$ such that $R_R \hookrightarrow M$. This leads us to a generalization of \cite[Corollary 2.6]{LC}.

\begin{thm} If $M \in \modr$ is s.Rickart and finitely idempotent faithful, then $R_R$ has the SIP. \label{sip_result}
\end{thm}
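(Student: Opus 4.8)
The plan is to reduce the SIP for $R_R$ to two ingredients and then supply each in turn. The first ingredient is that for \emph{every} finite subset $X \subseteq M$ the annihilator $r_R(X)$ is a direct summand of $R_R$; this is the s.Rickart case of \cite[Theorem 2.4]{LC} (already invoked in the proof of Theorem \ref{orthog_finite}), and it can also be recovered by induction from the two-element computation used in Lemma \ref{rightsummand}: writing $e_1R \cap r_R(m_2) = e_1\,r_R(m_2 e_1) = e_1 h R$ with $h = h^2$, one checks $e_1 h$ is idempotent. The second ingredient is that every direct summand $eR$ of $R_R$ (with $e = e^2$) can be realized as $r_R(S_e)$ for some finite $S_e \subseteq M$. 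Granting both, for idempotents $e,f$ I would compute $eR \cap fR = r_R(S_e) \cap r_R(S_f) = r_R(S_e \cup S_f)$, and since $S_e \cup S_f$ is finite this is a direct summand; an obvious induction then shows every finite intersection of direct summands is a direct summand, which is exactly the SIP.

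The whole difficulty concentrates in the second ingredient, and this is where the finitely idempotent faithful hypothesis must do its work. For a nontrivial idempotent $e$ I would apply the hypothesis to the (also nontrivial) idempotent $1-e$, obtaining a nonempty finite $S \subseteq M$ with $r_R(S) \cap (1-e)R = 0$, and then set $S_e := S(1-e) = \{\, s(1-e) : s \in S \,\}$, a finite subset of $M$. The inclusion $eR \subseteq r_R(S_e)$ is immediate, since $(1-e)a = 0$ for every $a \in eR$. For the reverse inclusion, if $a \in r_R(S_e)$ then $S\big[(1-e)a\big] = 0$, so $(1-e)a \in r_R(S) \cap (1-e)R = 0$; hence $(1-e)a = 0$ and $a = ea \in eR$. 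This yields $r_R(S_e) = eR$ exactly. I expect the main obstacle to be discovering the choice $S_e = S(1-e)$: the finitely idempotent faithful condition by itself only gives a finite set that \emph{detects} $(1-e)R$, and right-multiplying that witness by $1-e$ is precisely the device that upgrades detection into the sharp equality $r_R(S_e) = eR$.

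Finally I would assemble the SIP, taking care of the degenerate idempotents separately. If $e = 0$ or $f = 0$ then $eR \cap fR = 0$ is a summand, and if $e = 1$ (resp. $f = 1$) then $eR \cap fR = fR$ (resp. $eR$) is a summand; so one may assume $e$ and $f$ are nontrivial and invoke the second ingredient to write $eR = r_R(S_e)$ and $fR = r_R(S_f)$ with $S_e, S_f$ finite, giving $eR \cap fR = r_R(S_e \cup S_f) \dsum R_R$ by the first ingredient. Iterating over a finite family of summands (discarding the factors with $e_i = 1$, and terminating at $0$ if some $e_i = 0$) shows that every finite intersection of direct summands of $R_R$ is again a direct summand. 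One small point I would verify explicitly is nondegeneracy of $S_e$: for nontrivial $e$ one cannot have $S(1-e) = \{0\}$, since that would place $1-e$ in $r_R(S) \cap (1-e)R = 0$ and force $e = 1$; in any event the equality $r_R(S_e) = eR$ is unaffected by whether $0$ happens to lie in $S_e$.
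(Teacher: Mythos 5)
Your proposal is correct and follows essentially the same route as the paper: both use \cite[Theorem 2.4]{LC} to get that annihilators of finite subsets of an s.Rickart module are direct summands, and both apply the finitely idempotent faithful hypothesis to $1-e$ and right-multiply the resulting witness set by $1-e$ to pin down $eR$. Your packaging of the key step as the exact equality $eR = r_R\bigl(S(1-e)\bigr)$ is a slightly cleaner organization of the same computation the paper performs on $eR\cap fR$ directly, but it is not a different argument.
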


\begin{proof}
Let $e,f$ be nontrivial idempotents in $R$ and consider $eR \cap fR$. For $1-e,1-f \in R$, there exists nonempty finite subsets $S_1,S_2$ in $M$, respectively, such that $r_R(S_1) \cap (1-e)R = 0$ and $r_R(S_2) \cap (1-f)R = 0$. Now $eR \cap fR \subseteq r_R(S_1(1-e) \cup S_2(1-f)) = cR$ where $c=c^2 \in R$ since $S_1(1-e), S_2(1-f)$ are finite subsets \cite[Theorem 2.4]{LC}. Let $0 \neq a \in cR$ and observe that $S_1(1-e)a=0$ implies $(1-e)a \in r_R(S_1(1-e)) \cap (1-e)R$. Then $ (1-e)a = 0$ if and only if $a = ea$ if and only if $a \in eR$. Similarly, $S_2(1-f)a=0 $ implies $ a \in fR$. Hence, $eR \cap fR = cR$.
\end{proof}

\begin{cor} Let $M \in \modr$ be s.Rickart. Then $R_R$ has the SIP if any of the following hold:
\begin{enumerate}[(i)]
\item $R_R \hookrightarrow \bigoplus_{i=1}^{n} M_i$ where $M_i \iso M$ for $i=1,\ldots,n$ (e.g. $M$ is a generator in $\modr$).
\item There exists a monomorphism $h: \mathcal{I} (R)_{\mathcal{I} (R)} \hookrightarrow M_{\mathcal{I} (R)}$ where $\mathcal{I} (R) = \langle \{ e \in R \mid e = e^2 \} \rangle$.
\item For all $e = e^2 \in R$, there exists a nonempty finite $S \subseteq M$ such that $r_R(S) = eR$.
\end{enumerate}
\end{cor}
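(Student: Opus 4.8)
The plan is to derive all three statements from Theorem \ref{sip_result} by exhibiting, in each case, a module that is simultaneously s.Rickart and finitely idempotent faithful; the SIP of $R_R$ then follows at once. Hence the substantive work in every part is the verification of finite idempotent faithfulness, and I expect only case (ii) to be genuinely delicate.

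For (iii) the condition is essentially built into the hypothesis. Given a nontrivial idempotent $e$, the element $1-e$ is again an idempotent, so the assumption supplies a nonempty finite $S \subseteq M$ with $r_R(S) = (1-e)R$; since $(1-e)R \cap eR = 0$ for any idempotent $e$, we get $r_R(S) \cap eR = 0$, so $M$ is finitely idempotent faithful, and as $M$ is s.Rickart by hypothesis, Theorem \ref{sip_result} applies. For (i) I would set $N = \bigoplus_{i=1}^{n} M_i$ and first note that $N$ is s.Rickart, being a finite direct sum of copies of the s.Rickart module $M$ (by closure of $\sR$ under extensions, Lemma \ref{sRickart_class_extensions}, or \cite[Theorem 2.7]{LC}). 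Next, using the given embedding $\varphi \colon R_R \hookrightarrow N$, the singleton $S = \{\varphi(1)\}$ satisfies $r_R(S) = r_R(\varphi(1)) = \ker\varphi = 0$, whence $r_R(S) \cap eR = 0$ for every idempotent $e$; thus $N$ is finitely idempotent faithful and Theorem \ref{sip_result} yields the SIP. The parenthetical generator case is subsumed, since a generator $M$ produces an embedding $R_R \hookrightarrow M^n$ for some $n$.

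The main obstacle is (ii), precisely because the embedding $h \colon \mathcal{I}(R)_{\mathcal{I}(R)} \hookrightarrow M_{\mathcal{I}(R)}$ is only a morphism of $\mathcal{I}(R)$-modules, while finite idempotent faithfulness and the annihilators $r_R(-)$ are read off over all of $R$. Writing $m_0 = h(1)$, injectivity of $h$ gives exactly $r_{\mathcal{I}(R)}(m_0) = 0$. For a nontrivial idempotent $e$ (so $e \in \mathcal{I}(R)$) I would take $S = \{h(e)\} = \{m_0 e\}$; for $r \in eR \cap r_R(S)$ one has $r = er$, so $h(e)r = m_0(er) = m_0 r$, reducing the claim to $r_R(m_0) \cap eR = 0$. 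The crux is then to upgrade the $\mathcal{I}(R)$-level injectivity $r_R(m_0) \cap \mathcal{I}(R) = 0$ to the $R$-level statement $r_R(m_0) \cap eR = 0$: since every idempotent of $R$ lies in $\mathcal{I}(R)$, the right ideal $r_R(m_0) \cap eR \subseteq eR$ meets $\mathcal{I}(R)$ trivially, and I would argue that such an ideal must vanish (intuitively because $\mathcal{I}(R) \cap eR$ is large in $eR$, with the nonsingularity of the s.Rickart module $M$ available as further leverage). This bridging step between the $R$- and $\mathcal{I}(R)$-module structures is where the argument demands the most care; once finite idempotent faithfulness of $M$ is in hand, Theorem \ref{sip_result} closes the proof exactly as in the preceding two cases.
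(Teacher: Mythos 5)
Your parts (i) and (iii) are correct. Part (i) is essentially the paper's own argument: embed $R_R$ into the s.Rickart module $\bigoplus_{i=1}^{n} M_i$ and observe $r_R(\iota(1))=0$. For (iii) the paper does not actually route through Theorem \ref{sip_result}: it fixes idempotents $e,f$, takes a finite $S$ with $r_R(S)=eR$, and notes $fR\cap eR=fR\cap r_R(S)\dsum R_R$ by \cite[Lemma 2.3]{LC} (i.e.\ the computation behind Lemma \ref{rightlemma} and Lemma \ref{rightsummand}). Your alternative --- apply the hypothesis to $1-e$ to get a finite $S$ with $r_R(S)=(1-e)R$, then use $(1-e)R\cap eR=0$ to conclude finite idempotent faithfulness --- is valid (indeed $x=er=(1-e)s$ forces $x=ex=e(1-e)s=0$ even for non-central $e$) and has the virtue of deducing all three parts uniformly from Theorem \ref{sip_result}.

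Part (ii), however, has a genuine gap exactly at the point you flag. You correctly reduce to showing $r_R(m_0)\cap eR=0$ given only $r_R(m_0)\cap\mathcal{I}(R)=r_{\mathcal{I}(R)}(m_0)=0$, and then appeal to the heuristic that a right ideal of $eR$ meeting $\mathcal{I}(R)$ trivially must vanish because ``$\mathcal{I}(R)\cap eR$ is large in $eR$.'' That largeness claim is unjustified, and nonsingularity of $M$ is not the right leverage either. The bridge between the $\mathcal{I}(R)$-level and $R$-level statements is the s.Rickart hypothesis itself: the annihilator in question is generated by an \emph{idempotent}, and every idempotent of $R$ lies in $\mathcal{I}(R)$ by definition of $\mathcal{I}(R)$. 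Concretely, $r_R(m_0)=cR$ with $c=c^2\in R$, hence $c\in r_R(m_0)\cap\mathcal{I}(R)=0$, so $r_R(m_0)=0$ and a fortiori $r_R(m_0)\cap eR=0$. (The paper runs the same idea with $n=h(e)$ instead of $m_0=h(1)$: writing $r_R(n)=cR$, it observes $c\in r_R(n)\cap\mathcal{I}(R)=r_{\mathcal{I}(R)}(n)=(1-e)\mathcal{I}(R)$, so $cR\subseteq(1-e)R$, and since $n(1-e)=0$ gives $1-e\in cR$, in fact $r_R(n)=(1-e)R$ and $r_R(n)\cap eR=0$.) Until the ``largeness'' heuristic is replaced by this idempotent-generation argument, your proof of (ii) is incomplete.
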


\begin{proof}
$(i)$ By \cite[Theorem 2.7]{LC}, $\bigoplus_{i=1}^n M_i$ is s.Rickart. Let $0 \neq e=e^2 \in R$ and consider $\iota : R \hookrightarrow \bigoplus_{i=1}^n M_i$. Then $r_R(\iota (1))  = 0$ and hence $r_R(\iota (1)) \cap eR = 0$. By Theorem \ref{sip_result}, $R_R$ has the SIP.

$(ii)$ Let $e \in R$ be a nontrivial idempotent. Then $n = h(e) \in M$ and $r_{\mathcal{I} (R)} (n) = (1-e) \mathcal{I} (R)$. Next $r_{\mathcal{I} (R)} (n) = r_R(n) \cap \mathcal{I} (R) = cR \cap \mathcal{I} (R)$ where $c = c^2 \in R$. Observe $c \in r_R(n) \cap \mathcal{I} (R)$ and hence $c = (1-e)i \in (1-e)R$ where $i \in \mathcal{I} (R)$. Thus $cR \subseteq (1-e)R$. Since $n(1-e) = 0 $, $ 1-e \in r_R(n) = cR $. Hence $ cR = (1-e)R $ and so $ r_R(n) \cap eR = 0 $. Therefore $R_R$ has the SIP by Theorem \ref{sip_result}.

$(iii)$ Let $e,f$ be idempotents in $R$ and let $S$ be a nonempty finite subset of $M$ such that $r_R(S) = eR$. Since $M$ is s.Rickart, $fR \cap eR = fR \cap r_R(S) \leq^{\oplus} R_R$ by \cite[Lemma 2.3]{LC}. Hence, $R_R$ has the SIP.
\end{proof}


\begin{prop} Let $M \in \modr$ be semisimple. Consider the following conditions:
\begin{enumerate}[(i)]
\item $M$ is nonsingular.
\item $M$ is projective.
\item $M$ is s.Rickart.
\item $M$ is s.Baer.
\end{enumerate}
Then $(i)$-$(iii)$ are equivalent. If $M$ has only finitely many homogeneous components, then $(i)$-$(iv)$ are equivalent.
\label{nonsingular_simple}
\end{prop}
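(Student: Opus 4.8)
The plan is to establish $(iii)\Rightarrow(i)$, $(i)\Leftrightarrow(ii)$, and $(i)\Rightarrow(iii)$ for the first three conditions, and then, under the finiteness hypothesis, to reduce $(iv)$ to the single claim that a nonsingular simple module is s.Baer. Write $M=\bigoplus_\lambda S_\lambda$ as a direct sum of simple submodules. The elementary fact driving $(i)\Leftrightarrow(ii)$ is that a simple module is nonsingular if and only if it is projective: if $S=R/\mathfrak m$ is nonsingular then the maximal right ideal $\mathfrak m=r_R(\bar1)$ is not essential, hence is a direct summand, so $S\cong eR$ is projective; conversely a projective simple module is a summand of $R_R$ and so is nonsingular. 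Since $Z\!\left(\bigoplus_\lambda S_\lambda\right)=\bigoplus_\lambda Z(S_\lambda)$ and an arbitrary direct sum of projectives is projective, $M$ is nonsingular $\iff$ every $S_\lambda$ is nonsingular $\iff$ every $S_\lambda$ is projective $\iff$ $M$ is projective. The implication $(iii)\Rightarrow(i)$ is already recorded in the text, since every s.Rickart module is nonsingular.

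For $(i)\Rightarrow(iii)$, take $m\in M$. The cyclic submodule $mR$ is a finitely generated submodule of the semisimple nonsingular module $M$, hence a finite direct sum of nonsingular simple modules, each of which is projective by the previous paragraph; thus $mR$ is projective. Consequently the epimorphism $R_R\to mR$ with kernel $r_R(m)$ splits, so $r_R(m)=eR$ for some idempotent $e$. As $m$ is arbitrary, $M$ is s.Rickart. Combined with $(iii)\Rightarrow(i)$ and $(i)\Leftrightarrow(ii)$, this yields the equivalence of $(i)$, $(ii)$, and $(iii)$.

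Now assume $M$ has finitely many homogeneous components. The implication $(iv)\Rightarrow(iii)$ is immediate. For the converse, write $M=\bigoplus_{j=1}^{r}H_j$ with $H_j$ the homogeneous component of type $T_j$, so that $H_j$ is a direct sum of copies of a nonsingular simple module $T_j$. By Proposition \ref{finite_dsum} it suffices to show each $H_j$ is s.Baer, and by Proposition \ref{direct_prod_same} (applied to a family of copies of $T_j$) it suffices to show that each nonsingular simple $T_j$ is s.Baer. Thus everything reduces to the claim that a nonsingular simple module $S$ is s.Baer. For $\varnothing\neq N\subseteq S$ and any finite $N_0\subseteq N$, the map $R\to S^{|N_0|}$, $a\mapsto(na)_{n\in N_0}$, has kernel $r_R(N_0)$ and image inside the semisimple projective module $S^{|N_0|}$, so $R/r_R(N_0)$ is projective and $r_R(N_0)$ is a direct summand.

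The main obstacle is the passage from finite subsets to an arbitrary, possibly infinite, $N$: the descending family $\{r_R(N_0):N_0\subseteq N\text{ finite}\}$ of summands need not stabilize, so an extra idea is required. I would bring in nonsingularity together with Schur's lemma. Setting $D=\mathrm{End}_R(S)$, a division ring, one has $r_R(N)=r_R(V)$ for the left $D$-subspace $V=\sum_{n\in N}Dn$, and the faithful primitive ring $\bar R=R/r_R(S)$ acts on $S$ as a dense ring of $D$-linear transformations by the Jacobson density theorem. A $D$-linear projection of $S$ onto a complement of $V$ annihilates $V$ on the appropriate side and should furnish the idempotent generating $r_R(N)$; the delicate point, and the place where nonsingularity of $S$ is indispensable, is ensuring that this idempotent is realized inside $\bar R$, equivalently that $r_R(N)$ is a genuine summand of $R_R$. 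By Theorem \ref{ssip}, since $Z(S)=0$ it would in fact be enough to produce an idempotent $e$ with $r_R(N)\ess eR$. I expect this last step, constructing the idempotent for infinite $N$, to be the crux of the proof.
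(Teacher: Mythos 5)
Your handling of the equivalence of (i)--(iii) is correct and is essentially the paper's argument: a simple module is nonsingular iff projective, both properties pass through arbitrary direct sums, and a cyclic submodule of a semisimple module is a direct summand, hence projective, so each $r_R(m)$ splits off. Your reduction of (iv) to the single claim that a nonsingular simple module is s.Baer, via Propositions \ref{finite_dsum} and \ref{direct_prod_same}, is also exactly the paper's route, and your treatment of finite subsets is fine.

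The gap is the step you flag yourself --- passing from finite subsets to an arbitrary $N$ --- and your suspicion that ``an extra idea is required'' is better founded than you may realize. The paper closes this step in one line by asserting $r_R(N)=r_R(n)$ for any $0\neq n\in N$ ``since $S$ is simple,'' but that identity already fails whenever $\dim_D S>1$ (for the simple right $M_2(k)$-module of row vectors, $(1,0)$ and $(0,1)$ have distinct annihilators). Worse, if you push your own density-theoretic analysis to its conclusion, the required idempotent need not exist at all. As you note, $r_R(N)=fR$ with $f=f^2$ forces $V\subseteq S(1-f)\subseteq\bigcap\{\ker a : Va=0\}$, where $V=\sum_{n\in N}Dn$ and $S(1-f)=\ker f$. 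Now take $R=k\cdot 1+\mathrm{Fin}$, where $\mathrm{Fin}$ is the ring of finite-rank $k$-linear operators on an infinite-dimensional $k$-space $S$: then $S$ is a faithful, nonsingular, simple right $R$-module, every idempotent of $R$ is a finite-rank idempotent $g$ or a complement $1-g$, so $\ker f$ always has finite dimension or finite codimension; yet for a subspace $V$ of infinite dimension and infinite codimension one checks that $\bigcap\{\ker a : Va=0\}=V$ and that $r_R(V)$ is a nonzero right ideal of finite-rank operators. No idempotent can be squeezed in, so $r_R(V)\neq fR$ and $S$ is not s.Baer. In other words, the step you identify as the crux cannot be carried out without further hypotheses (e.g.\ $\dim_D S<\infty$, or all idempotent-generated right ideals two-sided), and the paper's proof does not supply them. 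Your proof of (i)--(iii) stands; the implication (iii)$\Rightarrow$(iv) is where both your proposal and the paper's argument stop short.
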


\begin{proof}
$(i) \Rightarrow (ii)$ A simple nonsingular module is projective \cite[Proposition 1.24]{G}. Thus, a direct sum of simple nonsingular modules is nonsingular and projective.

$(ii) \Rightarrow (iii)$ Let $n \in M$. Then $nR$ is a direct summand of $M$ and, hence, is projective. Since all cyclic submodules are projective, $M$ is s.Rickart. 

$(iii) \Rightarrow (i)$ The part is immediate.

$(iii) \Rightarrow (iv)$ Without loss of generality, assume $M$ has two homogeneous components $C_1$ and $C_2$. Then there are simple submodules $S_1$ and $S_2$ such that $ C_1 = \bigoplus_I X_i$ where $X_i \iso S_1$ for all $i \in I$ and $C_2 = \bigoplus_A Y_\alpha$ where $Y_\alpha \iso S_2$ for all $\alpha \in A$. Let $\varnothing \neq N_i \subseteq S_i$ and let $0 \neq n_i \in N_i$. Then $r_R(n_i) = r_R(N_i) = eR$ for some $e = e^2 \in R$, since the $S_i$ are simple. From Proposition \ref{direct_prod_same}, $C_1$ and $C_2$ are s.Baer. Hence $M \iso S_1 \oplus S_2$ is s.Baer by Proposition \ref{finite_dsum}.

$(iv) \Rightarrow (i)$ This part is immediate.
\end{proof}

To further motivate our next result, recall the following: for a ring $R$ with $Soc(R_R) \neq 0$, $R$ is right primitive if and only if $R$ is prime.

\begin{thm}
\begin{enumerate}[(i)]
\item $R$ is a right primitive ring with $Soc(R_R) \neq 0$ if and only if there exists a faithful simple s.Baer $R$-module. 
\item Let $R$ be right primitive ring with $Soc(R_R) \neq 0$. Then $R_R$ has the SSIP and every faithful simple $R$-module is s.Baer.
\end{enumerate}
\label{primitive}
\end{thm}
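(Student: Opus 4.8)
The plan is to reduce both parts to one structural fact: under the hypotheses of (ii), a faithful simple module $M$ must be isomorphic to a minimal right ideal of $R$, hence projective, and then let the earlier results do the rest. First I would invoke the equivalence recalled just before the statement, so that $Soc(R_R) \neq 0$ together with right primitivity makes $R$ prime, in particular semiprime. In a semiprime ring every minimal right ideal $K$ satisfies $K^2 \neq 0$ and is therefore generated by an idempotent, $K = eR \dsum R_R$; thus any minimal right ideal is projective, and since $Soc(R_R) \neq 0$ at least one such $K$ exists.

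Next I would exploit faithfulness. Because $r_R(M) = 0$, no nonzero right ideal can annihilate $M$, so $MK \neq 0$ for the minimal right ideal $K$ above. Choosing $m \in M$ with $mK \neq 0$, the right $R$-homomorphism $\psi : K \to M$, $\psi(x) = mx$, is nonzero; as $K$ is simple it is injective, and as $M$ is simple its nonzero image is all of $M$, so $\psi$ is an isomorphism and $M \iso K = eR$ is projective. Since a simple module has exactly one homogeneous component, Proposition \ref{nonsingular_simple} upgrades ``projective'' to ``s.Baer''. This establishes the second assertion of (ii): every faithful simple $R$-module is s.Baer. For the SSIP in (ii), having produced a faithful s.Baer module $M$, I would apply Theorem \ref{annihilator_ssip}: faithfulness gives $r_R(M) = 0$, so part (iv) yields that $R/r_R(M) = R$ has the SSIP as an $R$-module (equivalently, this is the faithful case $(ii)\Rightarrow(i)$ of Theorem \ref{ssip}).

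Part (i) then follows quickly. For $(\Rightarrow)$, right primitivity supplies a faithful simple module, which the paragraph above shows is s.Baer, so a faithful simple s.Baer module exists. For $(\Leftarrow)$, suppose $M$ is a faithful simple s.Baer module: then $R$ is right primitive by definition, and picking $0 \neq m \in M$ we have $M = mR$, while s.Rickartness gives $r_R(m) = eR$ for an idempotent $e$. Hence $M \iso R/eR \iso (1-e)R \dsum R_R$ is a nonzero simple summand of $R_R$, i.e. a minimal right ideal, so $Soc(R_R) \neq 0$.

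I expect the main obstacle to be the structural isomorphism $M \iso K$, since that is the single place where primeness (guaranteeing that minimal right ideals are idempotent-generated summands, hence projective) and faithfulness (forcing $MK \neq 0$) must be combined correctly; once $M$ is realized as a projective simple module, the remaining steps are direct citations of Proposition \ref{nonsingular_simple}, Theorem \ref{annihilator_ssip}, and Theorem \ref{ssip}.
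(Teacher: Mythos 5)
Your proof is correct, but the key step --- showing that a faithful simple module over a right primitive ring with nonzero socle is s.Baer --- is reached by a genuinely different route from the paper's. You realize $M$ concretely as a minimal right ideal: the recalled equivalence makes $R$ prime, hence semiprime, so Brauer's lemma gives $K=eR\dsum R_R$ for any minimal right ideal $K$; faithfulness forces $MK\neq 0$, and then $x\mapsto mx$ is an isomorphism $K\to M$ by simplicity of both sides, after which Proposition \ref{nonsingular_simple} upgrades projectivity to the s.Baer property. The paper instead fixes $0\neq n\in M$ and argues by dichotomy on the maximal right ideal $r_R(n)$: either $r_R(n)=eR\dsum R_R$, in which case $M\iso (1-e)R$ is a nonsingular simple module and Proposition \ref{nonsingular_simple} applies, or $r_R(n)\ess R_R$, in which case $Soc(R_R)\subseteq r_R(n)$, which is impossible since $r_R(n)$ contains no nonzero two-sided ideal when $M$ is faithful. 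Your version buys an explicit structural statement ($M$ embeds in $R_R$ as a minimal right ideal) at the cost of importing Brauer's lemma for semiprime rings; the paper's version is more self-contained, ruling out the bad case via essentiality of the socle rather than exhibiting the isomorphism. The remaining ingredients coincide: the SSIP comes from the faithful s.Baer module via Theorem \ref{annihilator_ssip}(iv) (equivalently the faithful case of Theorem \ref{ssip}), and your argument for the converse direction of (i) is the same as the paper's.
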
 

\begin{proof}
$(i) (\Rightarrow)$ A routine argument will show that $Z(R_R) = 0$ since $Soc(R_R) \ess R_R$. By assumption, there exists a faithful simple $R$-module $M$. For $0 \neq n \in M$, $M = nR \iso R/r_R(n)$. If $r_R(n) = eR$ for $e=e^2 \in R$, then $M \iso (1-e)R$ and $(1-e)R \subseteq Soc(R_R)$. Thus $M$ is a nonsingular simple module and, hence, is s.Baer by Proposition \ref{nonsingular_simple}. If $r_R(n) \nleq^{\oplus} R_R$ then $r_R(n) \ess R_R$ since $r_R(n)$ is a maximal right ideal. Now $Soc(R_R) \subseteq r_R(n)$. Since $M$ is faithful, $r_R(n)$ cannot contain a nonzero ideal contrary to $Soc(R_R) \neq 0$. Thus $r_R(n) \dsum R_R$ implies $M$ is s.Rickart. Again Proposition \ref{nonsingular_simple} yields $M$ is s.Baer.

$(\Leftarrow)$ Clearly, $R$ is right primitive. Since $M$ is simple and s.Rickart, $M = nR \iso R/r_R(n)$ for $0 \neq n \in M$ where $r_R(n) \dsum R_R$ is a maximal right ideal. Thus there is an $aR \leq R_R$ such that $aR \oplus r_R(n) = R$ where $aR$ is a minimal right ideal. Hence $Soc(R_R) \neq 0$.

$(ii)$ By $(i)$, there exists a faithful simple s.Baer $R$-module, and, by Theorem \ref{ssip}, $R_R$ has the SSIP. Using the argument in $(\Rightarrow)$ of $(i)$, we see that every faithful simple $R$-module is s.Baer.
\end{proof}


\begin{prop} \label{indecomposable}
Let $R_R$ be indecomposable. Then $M_R$ is s.Baer if and only if $M_R$ is s.Rickart if and only if $\ r_R(m)=0$ for all $0 \neq m \in M_R$. 
\end{prop}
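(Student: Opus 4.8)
The plan is to first translate the indecomposability hypothesis into a statement purely about idempotents, and then close the three conditions in a cycle
\[
\text{s.Baer} \Rightarrow \text{s.Rickart} \Rightarrow \big(r_R(m)=0 \text{ for all } 0\neq m\big) \Rightarrow \text{s.Baer}.
\]
The single structural fact the whole argument rests on is that $R_R$ is indecomposable precisely when the only idempotents of $R$ are $0$ and $1$. Indeed, every direct summand of $R_R$ has the form $eR$ with $e=e^2$, complemented by $(1-e)R$, so a nontrivial decomposition $R_R = A \oplus B$ with $A,B \neq 0$ exists if and only if $R$ possesses a nontrivial idempotent. Once this is noted, each implication is short.

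The implication s.Baer $\Rightarrow$ s.Rickart is already recorded in the excerpt immediately after Definition \ref{1stdefn}, so nothing is required there. For s.Rickart $\Rightarrow \big(r_R(m)=0$ for all $0 \neq m\big)$, I would take any nonzero $m \in M$ and write $r_R(m) = eR$ for an idempotent $e$; by indecomposability $e \in \{0,1\}$. The value $e=1$ is impossible, since it would force $m = m\cdot 1 = 0$, contradicting $m \neq 0$; hence $e = 0$ and $r_R(m) = 0$.

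For the remaining implication $\big(r_R(m) = 0$ for all $0 \neq m\big) \Rightarrow$ s.Baer, I would take an arbitrary $\varnothing \neq N \subseteq M$ and use $r_R(N) = \bigcap_{n \in N} r_R(n)$. If $N = \{0\}$ then $r_R(N) = R = 1\cdot R$; otherwise $N$ contains a nonzero element $n_0$, whence $r_R(N) \subseteq r_R(n_0) = 0$ and so $r_R(N) = 0 = 0 \cdot R$. In either case $r_R(N)$ is generated by an idempotent of $R$, which is exactly the s.Baer condition.

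No step presents a genuine obstacle; the argument is essentially bookkeeping once indecomposability is recast as the absence of nontrivial idempotents. The only points needing a moment's care are the degenerate case $N = \{0\}$ (where the generating idempotent is $1$ rather than $0$) and the exclusion of $e = 1$ in the s.Rickart step.
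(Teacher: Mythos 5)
Your proof is correct and follows essentially the same route as the paper's: the paper likewise dismisses the first two implications as immediate (with the identification of indecomposability with the absence of nontrivial idempotents left implicit) and closes the cycle by writing $r_R(S)=\bigcap_{s\in S}r_R(s)$, treating $S=\{0\}$ separately. Your version merely makes the idempotent bookkeeping explicit; no substantive difference.
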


\begin{proof}
Clearly, $M_R$ is s.Baer implies $M_R$ is s.Rickart implies $r_R(m)=0$ for all $0 \neq m \in M$. Let us assume $r_R(m) = 0$ for all $ 0 \neq m \in M$ and let $\varnothing \neq S \subseteq M$. If $S = \{ 0 \}$, then $r_R(S) = R$. Now suppose $S \neq \{ 0 \}$. Then $r_R(S) = \bigcap_{S} r_R(s) = 0$. Therefore $M_R$ is s.Baer.
\end{proof}


Consider $\mathbb{Z}_{\mathbb{Z}} \rightarrow  (\mathbb{Z}_4)_{\mathbb{Z}}$. We can readily observe that the s.Baer property is not preserved by arbitrary module homomorphisms. In spite of this handicap, under certain conditions the s.Baer property integrates well with the $Hom$ functor.

\begin{prop} Let $N,M \in \modr$ and let $M$ be s.Baer.
\begin{enumerate}[(i)]
\item If $R$ is commutative and every $0 \neq h \in Hom_R(M,N)$ is a monomorphism (e.g., $N$ is monoform and $M \leq N$), then $Hom_R(M,N)$ is a s.Baer $R$-module and $r_R(Hom_R(M,N)) = r_R(M)$.

\item Let $S$ be a ring. If $N,M$ are $(S,R)$-bimodules, then $Hom_S(N,M)$ is a right s.Baer $R$-module. In particular, if $M = R = S$ is a Baer ring, then the dual module $N^* = Hom(_SN$,$_SS)$ is a right s.Baer $S$-module.
\end{enumerate}
\end{prop}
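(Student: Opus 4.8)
The plan is to reduce, in both parts, the computation of right annihilators in the relevant Hom-module to a right annihilator of a subset of $M$, where the s.Baer hypothesis on $M$ applies verbatim.

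For (i) I would first record that, because $R$ is commutative, $Hom_R(M,N)$ is a right $R$-module under $(hr)(m) := h(m)r$; the only identity needing a check, $(hr)(ms) = ((hr)(m))s$, is exactly where commutativity enters. The key observation is that a single nonzero $h$ already satisfies $r_R(h) = r_R(M)$: for $r \in R$ we have $hr = 0$ if and only if $h(m)r = 0$ for all $m$, and since $h$ is $R$-linear $h(m)r = h(mr)$, so the injectivity of $h$ forces $h(mr) = 0$ if and only if $mr = 0$; hence $r_R(h) = r_R(M)$. For an arbitrary $\varnothing \neq \mathcal{H} \subseteq Hom_R(M,N)$ I then write $r_R(\mathcal{H}) = \bigcap_{h \in \mathcal{H}} r_R(h)$. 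If $\mathcal{H} = \{0\}$ this is $R = 1\cdot R$; otherwise the nonzero members each contribute $r_R(M)$ and the zero map (if present) contributes $R$, so $r_R(\mathcal{H}) = r_R(M)$. Since $M$ is s.Baer, applying the definition to the subset $M$ gives $r_R(M) = eR$ with $e = e^2$, whence $r_R(\mathcal{H})$ is idempotent-generated and $Hom_R(M,N)$ is s.Baer. Taking $\mathcal{H} = Hom_R(M,N)$, which is nonzero in the motivating situations (it contains the inclusion when $M \leq N$), yields $r_R(Hom_R(M,N)) = r_R(M)$.

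For (ii) I would note that $(fr)(n) := f(n)r$ makes $Hom_S(N,M)$ a right $R$-module, the left $S$-linearity of $fr$ following from the $(S,R)$-bimodule identity $s(f(n)r) = (sf(n))r$ on $M$. The essential reduction is that annihilating a family of $S$-homomorphisms equals annihilating the set of their values: for $\varnothing \neq \mathcal{F} \subseteq Hom_S(N,M)$ set $X = \{ f(n) \mid f \in \mathcal{F},\ n \in N \} \cup \{0\} \subseteq M$; then $r \in r_R(\mathcal{F})$ if and only if $f(n)r = 0$ for all $f \in \mathcal{F}$ and $n \in N$, i.e. if and only if $r \in r_R(X)$. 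As $X$ is a nonempty subset of the s.Baer module $M$, we get $r_R(\mathcal{F}) = r_R(X) = eR$ with $e = e^2$, so $Hom_S(N,M)$ is right s.Baer. The ``in particular'' assertion is the specialization $M = R = S$: a Baer ring is precisely an s.Baer right module over itself, so $N^{*} = Hom({}_SN,{}_SS) = Hom_S(N,M)$ is right s.Baer by the case just established.

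These arguments are largely bookkeeping, so I do not anticipate a genuine obstacle. The points demanding care are verifying that the two Hom-sets really are right $R$-modules (the role of commutativity in (i) and of bimodule compatibility in (ii)) and the translation of $r_R(\mathcal{H})$, respectively $r_R(\mathcal{F})$, into an annihilator of a subset of $M$. The subtlest single step is the identity $r_R(h) = r_R(M)$ in (i), which genuinely uses both the $R$-linearity and the injectivity of $h$; I would also flag that the asserted equality $r_R(Hom_R(M,N)) = r_R(M)$ presupposes $Hom_R(M,N) \neq 0$.
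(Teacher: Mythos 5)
Your argument is correct and matches the paper's proof in its essentials: for (i) both proofs show $r_R(h)=r_R(M)$ for each nonzero $h$ by combining the $R$-linearity and injectivity of $h$, then intersect over $h\in\mathcal{H}$; for (ii) both identify $r_R(\mathcal{F})$ with the right annihilator of the set of all values $f(n)$, which is a nonempty subset of the s.Baer module $M$. The one place you genuinely diverge is the ``in particular'' clause of (ii): the paper notes that $N^{*}$ is a torsionless right module over the Baer ring $S$ and invokes its earlier characterization of Baer rings as those over which every torsionless module is s.Baer, whereas you simply specialize the general statement of (ii) to $M=R=S$ (an $(S,S)$-bimodule that is s.Baer over itself precisely because $S$ is Baer); your route is more direct and equally valid. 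Your closing caveat that the equality $r_R(Hom_R(M,N))=r_R(M)$ presupposes $Hom_R(M,N)\neq 0$ is a fair observation that the paper's own proof also glosses over.
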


\begin{proof}
$(i)$ First we will show $r_R(S) = r_R(h(S))$ for $\varnothing \neq S \subseteq M$. Observe $r_R(S) \subseteq r_R(h(S))$. Let $a \in r_R(h(S))$ and consider $h(S)a = h(Sa) = 0$. Then $Sa = 0 $ implies $ a \in r_R(S)$. So $r_R(S) = r_R( h(S) )$. Now for $0 \neq h \in Hom_R(M,N)$, $r_R(h) = \{ a \in R \mid h \ast a = 0 \} = \{a \in R \mid h(m) a = 0 \  \forall m \in M \} = r_R(Im(h)) = r_R(h(M)) = r_R(M) = eR$ for some $e = e^2 \in R$ since $M$ is s.Baer. Then for any $\varnothing \neq H \subseteq Hom_R(M,N)$, $r_R(H) = \bigcap_H r_R(h) = r_R(M) = eR$. In particular, $r_R(Hom_R(M,N)) = eR$.

$(ii)$ Let $\varnothing \neq H \subseteq Hom_S(N,M)$. Then $r_R(H) = \bigcap_H r_R(h) = \bigcap_H r_R(Im(h)) = r_R(\sum_H Im(h)) = eR$ for some $e = e^2 \in R$ since $\sum_H Im(h) \subseteq M$ and $M_R$ is s.Baer. Thus $Hom_S(N,M)$ is a s.Baer right $R$-module. Now supposing $M = R = S$ is a Baer ring, then $N^* = Hom(_SN$,$_SS)$ is a torsionless right $S$-module \cite[p. 145]{L}. Hence by Corollary \ref{baercor} (vii), $N^*$ is a right s.Baer $S$-module.
\end{proof}

\begin{prop} \label{essential_hom}
Let $K,M,T \in \modr$ with $T \ess M$. If K is s.Rickart (s.Baer) and $Hom_R (T,K)=0$, then $Hom_R(M,K)=0$.
\end{prop}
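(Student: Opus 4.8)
The plan is to show directly that every homomorphism $f \in Hom_R(M,K)$ is the zero map. First I would restrict $f$ to the essential submodule $T$. Since $f|_T$ is an element of $Hom_R(T,K) = 0$, we get $f(T) = 0$, that is, $T \subseteq \ker f$.

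Next, because $T \ess M$ and $T \subseteq \ker f \subseteq M$, essentiality passes upward to the larger submodule, so $\ker f \ess M$. Invoking the Sandomierski result quoted in the introduction (for each $m \in M$, the set $m^{-1}(\ker f) = r_R(m + \ker f)$ is essential in $R_R$ whenever $\ker f \ess M$), every element of $M/\ker f$ is annihilated by an essential right ideal. Hence $M/\ker f$ is singular, i.e. $Z(M/\ker f) = M/\ker f$.

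On the other hand, $M/\ker f \iso f(M) \leq K$. Since $K$ is s.Rickart it is nonsingular, and a submodule of a nonsingular module is nonsingular, so $f(M)$, and therefore $M/\ker f$, is nonsingular. A module that is simultaneously singular and nonsingular must be zero, forcing $f(M) = 0$ and hence $f = 0$. As $f \in Hom_R(M,K)$ was arbitrary, $Hom_R(M,K) = 0$. The s.Baer case requires no separate treatment, since s.Baer modules are s.Rickart and hence nonsingular. There is no real obstacle here; the only points needing care are that essentiality of $T$ in $M$ lifts to $\ker f$, and the standard singular/nonsingular dichotomy that collapses $f(M)$ to zero.
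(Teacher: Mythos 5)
Your proof is correct, but it takes a different route from the paper's. You reduce everything to nonsingularity: since $T \subseteq \ker f$ and $T \ess M$, the kernel is essential, so by the Sandomierski fact $M/\ker f$ is singular; yet $M/\ker f \iso f(M) \leq K$ is nonsingular because $K$ is s.Rickart, and a module that is both singular and nonsingular vanishes. The paper instead argues by contradiction through projectivity: for $0 \neq x + \ker h$ in $M/\ker h$, the cyclic module $xR/(xR \cap \ker h)$ is s.Rickart, hence of the form $R/eR$ with $eR$ a direct summand, hence projective; the resulting splitting of $xR \to xR/(xR\cap\ker h)$ produces a nonzero submodule of $xR$ meeting $\ker h$ trivially, contradicting essentiality of the kernel. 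Your argument is shorter and actually proves a stronger statement, since it uses only the consequence that $K$ is nonsingular rather than the full s.Rickart hypothesis (so the conclusion holds for any nonsingular $K$); the paper's argument, while longer, showcases the projectivity of cyclic subquotients that is thematic for Section 2. Both proofs are valid, and your handling of the two delicate points (essentiality passing up to $\ker f$, and the singular/nonsingular dichotomy) is accurate.
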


\begin{proof}
Suppose, to the contrary, that $Hom_R(M,K) \neq 0$ (i.e., $ \exists 0 \neq h \in Hom_R(M,K)$). Consider the short exact sequence
$0 \rightarrow ker (h) \rightarrow M \rightarrow M/ker(h) \rightarrow 0$, where $M/ker(h) \simeq Im(h) \leq K$. Since $K \in \sR \ (\sB )$, by Lemma \ref{hered}, $M/ker(h) \in \sR \ ( \sB )$. Also note $ker(h) \ess M$ since $h(T) = 0$. For $0 \neq x + ker(h) \in M/ker(h)$, $\frac{xR+ker(h)}{ker(h)} \iso \frac{xR}{xR\cap ker(h)} \in \sR \ (\sB )$ and projective. Let $\iota:\frac{xR}{xR\cap ker(h)} \rightarrow \frac{xR}{xR\cap ker(h)}$ be the identity map and $g:xR \rightarrow \frac{xR}{xR\cap ker(h)}$ be the natural map. Since $\frac{xR}{xR\cap ker(h)}$ is projective, there exists $\overline{f}: \frac{xR}{xR\cap ker(h)} \rightarrow xR$ such that $g \overline{f} = \iota$ and $g$ splits $xR$. Hence, $xR = Im(\overline{f}) \oplus ker(g)$ where $ker(g) = xR \cap ker(h)$. Now $ker(h) \cap Im(\overline{f}) = 0$ contrary to $ker(h)$ being essential. Thus we have that $Hom_R(M,K) = 0$.
\end{proof}


\begin{ex} \label{1stexample}
The following examples provide further motivation for investigating s.Baer modules.
\end{ex}
\begin{enumerate}[(i)]

\item By \cite[Lemma 3]{K}, every $AW^*$-module over a commutative $AW^*$-algebra is a s.Baer module. 

\item By \cite[p. 352]{BM}, any algebraically finitely generated $C^*$-module is projective in the sense of pure algebra. Thus every algebraically finitely generated $C^*$-module over an $AW^*$-algebra (Rickart $C^{*}$-algebra) $A$ is an s.Baer (s.Rickart) $A$-module by Corollary \ref{baercor} (vii).

\item Recall \cite[Theorem 2.1]{CK}: A ring $R$ is a right nonsingular right extending ring if and only if $R$ is a right cononsingular Baer ring. Thus Theorems \ref{ssip} and \ref{extend} tell us that over a right nonsingular right extending ring that a module is s.Baer if and only if it is nonsingular.

\item Let $R$ be a Baer ring. Any torsionless $R$-module $M$ is s.Baer by Lemma \ref{hered} and Corollary \ref{baercor}. In this instance, the class of torsionless modules is contained properly in the class of s.Baer modules. For instance, we can take $\mathbb{Q}_\mathbb{Z}$ which is an s.Baer module that is not torsionless since $Hom_\mathbb{Z}(\mathbb{Q}, \mathbb{Z}) = 0$.

\item Here we show that even a nonsingular module over a SSIP ring cannot capture the s.Baer property. Let $M = R = T_2(\Z)$. Then $R_R$ has the SSIP by Lemma \ref{triangular_ssip_lemma} and is nonsingular. However $R$ is not Baer \cite[p. 16]{K1}. 

\end{enumerate}


\begin{thm} \label{semisimple}
The following are equivalent:
\begin{enumerate}[(i)]
\item $R_R$ is semisimple.
\item Every $R$-module is a s.Baer $R$-module.
\item Every $R$-module is a s.Rickart $R$-module.
\item Every cyclic $R$-module is a s.Baer $R$-module.
\end{enumerate}
\end{thm}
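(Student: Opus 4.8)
The plan is to establish the implications $(i)\Rightarrow(ii)$, the two immediate reductions $(ii)\Rightarrow(iii)$ and $(ii)\Rightarrow(iv)$, and the two return implications $(iii)\Rightarrow(i)$ and $(iv)\Rightarrow(i)$; together these close the cycle and yield all four equivalences. The forward implication and the two reductions are quick, so I would concentrate the effort on the returns, which both rest on the single observation that s.Rickart modules are nonsingular.

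For $(i)\Rightarrow(ii)$ I would argue directly rather than route through Corollary \ref{baercor}. Given any $M \in \modr$ and any $\varnothing \neq N \subseteq M$, the annihilator $r_R(N)$ is a right ideal of $R$, since $Na = 0$ forces $N(ab) = (Na)b = 0$ for every $b \in R$. When $R_R$ is semisimple every right ideal is a direct summand of $R_R$, hence equal to $eR$ for some $e = e^2 \in R$. Thus $r_R(N) = eR$ and $M$ is s.Baer. Then $(ii)\Rightarrow(iii)$ is exactly the implication ``s.Baer $\Rightarrow$ s.Rickart'' noted after Definition \ref{1stdefn}, and $(ii)\Rightarrow(iv)$ holds because a cyclic module is in particular a module.

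The main obstacle is the return direction; it is not computationally hard, but it requires extracting the correct structural consequence of the hypothesis. Since every s.Rickart module (and a fortiori every s.Baer module) is nonsingular, either $(iii)$ or $(iv)$ forces every cyclic $R$-module to be nonsingular. I would then test this against the singular cyclic modules: if $I$ is a right ideal with $I \ess R_R$, the cyclic module $R/I$ is singular, so $Z(R/I) = R/I$, while nonsingularity gives $Z(R/I) = 0$; hence $R/I = 0$ and $I = R$. The upshot is that the only essential right ideal of $R$ is $R$ itself.

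Finally I would convert this into semisimplicity. For an arbitrary right ideal $I$, choose by Zorn's lemma a right ideal $J$ maximal with respect to $I \cap J = 0$; a complement of this kind satisfies $I \oplus J \ess R_R$, so by the preceding step $I \oplus J = R$ and $I \dsum R_R$. As every right ideal is then a direct summand, $R_R$ is semisimple, which gives $(iii)\Rightarrow(i)$ and, by the identical argument applied to cyclic modules, $(iv)\Rightarrow(i)$. The only point needing care is the standard fact that a complement $J$ of $I$ makes $I \oplus J$ essential in $R_R$; the rest is routine bookkeeping.
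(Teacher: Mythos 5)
Your proposal is correct, and it closes the same cycle of implications as the paper does ($(i)\Rightarrow(ii)\Rightarrow(iii),(iv)$ and back), but the individual steps are argued quite differently. For $(i)\Rightarrow(ii)$ the paper routes through the SSIP and Theorem \ref{ssip}, whereas you observe directly that $r_R(N)$ is a right ideal and that semisimplicity makes every right ideal a direct summand; your version is shorter and avoids invoking earlier machinery. The more substantive divergence is in the return direction: the paper disposes of $(iii)\Rightarrow(i)$ by citing \cite[Proposition 2.10]{LC} and then proves $(iv)\Rightarrow(iii)$ by noting that s.Baerness of each $mR$ already gives $r_R(m)=eR$; you instead prove $(iii)\Rightarrow(i)$ and $(iv)\Rightarrow(i)$ from scratch, using only the fact (stated after Definition \ref{1stdefn}) that s.Rickart modules are nonsingular, so every singular cyclic $R/I$ with $I\ess R_R$ must vanish, forcing $R$ to be its only essential right ideal, and then the standard complement argument shows every right ideal splits off. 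All the steps you flag as needing care check out: $Z(R/I)=R/I$ when $I\ess R_R$, and a complement $J$ of $I$ does satisfy $I\oplus J\ess R_R$. What your route buys is a self-contained proof that does not lean on the external reference; what the paper's route buys is brevity, since the nonsingularity argument is essentially what is packaged inside the cited result.
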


\begin{proof}
$(i)\Rightarrow (ii)$ Since $R_R$ is semisimple, $R_R$ has SSIP. For $m \in M_R$, $r_R(m) \leq^\oplus R_R$. Thus $M$ is s.Baer by Theorem \ref{ssip}. 

$(ii) \Rightarrow (iii)$ and $(ii) \Rightarrow (iv)$ are clear. 

$(iii)\Rightarrow (i)$ This follows from \cite[Proposition 2.10]{LC}. 

$(iv)\Rightarrow (iii)$ Let $M \in \modr$. By assumption, $mR$ is s.Baer for all $m \in M$, and hence for $m \in mR$,  we have that $r_R(m) = eR$ for some $e=e^2 \in R$.
\end{proof}

Recall from \cite[Definition 1.3]{ABT}, a module $M$ is $\mathcal{G}$-$extending$ if and only if for each $X \leq M$, there exists $D \leq^{\oplus} M$ such that $X \cap D \ess D$ and $X \cap D \ess X$. Note that every extending module is $\mathcal{G}$-extending. See \cite{ABT} for further examples and details. Since every s.Rickart module is nonsingular it is natural to ask: \textit{When is every nonsingular module a s.Rickart or s.Baer module?} Our next result gives a partial answer to this question.


\begin{thm} \label{extend}
 Let $R_R$ be a $\mathcal{G}$-extending module. 
 \begin{enumerate}[(i)]
 \item $M_R$ is s.Rickart if and only if $Z(M) = 0$.
 \item If $R_R$ has the SSIP or $R$ is orthogonally finite, then $M_R$ is s.Baer if and only if $Z(M) = 0$.
 \end{enumerate}
\end{thm}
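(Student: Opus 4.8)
The plan is to treat the two equivalences separately, with the reverse direction of (i) carrying the real content. In both parts the forward implication is immediate: every s.Rickart, hence every s.Baer, module is nonsingular (as noted after Definition \ref{1stdefn}), so s.Rickart (s.Baer) already forces $Z(M) = 0$.

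For the converse in (i), I would assume $Z(M) = 0$, fix $m \in M$, and aim to show $r_R(m)$ is generated by an idempotent. Since $R_R$ is $\mathcal{G}$-extending, applying the definition to the right ideal $r_R(m) \leq R_R$ produces a summand $D \dsum R_R$, say $D = eR$ with $e = e^2$, satisfying $r_R(m) \cap D \ess D$ and $r_R(m) \cap D \ess r_R(m)$. The key step is to show $D \subseteq r_R(m)$. For this I would look at the right-multiplication map $D \to M$, $d \mapsto md$, whose kernel is exactly $r_R(m) \cap D$, so that $mD \iso D/(r_R(m)\cap D)$. Because the kernel is essential in $D$, each coset $d + (r_R(m)\cap D)$ is annihilated by the essential right ideal $d^{-1}(r_R(m)\cap D)$ (the Sandomierski fact quoted in the introduction), so $D/(r_R(m)\cap D)$ is singular; since $Z(M)=0$ this forces $mD = 0$, i.e. $D \subseteq r_R(m)$.

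Once $D \subseteq r_R(m)$, the second condition reads $D = r_R(m) \cap D \ess r_R(m)$. Writing $R = D \oplus D'$ and applying the modular law gives $r_R(m) = D \oplus (r_R(m) \cap D')$; essentiality of $D$ in $r_R(m)$ forces $r_R(m) \cap D' = 0$, whence $r_R(m) = D = eR$. This shows $M$ is s.Rickart and completes (i).

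For (ii), the forward direction is once again the nonsingularity of s.Baer modules. For the converse, assuming $Z(M)=0$, part (i) already gives that $M$ is s.Rickart; then if $R_R$ has the SSIP I would invoke the implication $(i)\Rightarrow(ii)$ of Theorem \ref{ssip}, and if $R$ is orthogonally finite I would invoke Theorem \ref{orthog_finite}, in either case concluding that $M$ is s.Baer. The main obstacle is the singularity argument in (i): recognizing $mD$ as a singular homomorphic image and using $Z(M)=0$ to collapse it to $0$; after that, the direct-summand-meets-essential bookkeeping via the modular law is routine.
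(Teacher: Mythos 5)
Your proof is correct and follows essentially the same route as the paper: apply the $\mathcal{G}$-extending property to $r_R(m)$, use the Sandomierski fact to see that $mD$ is singular (the paper phrases this as $meL=0$ for the essential right ideal $L=e^{-1}(r_R(m)\cap eR)$), conclude $D\subseteq r_R(m)$ from $Z(M)=0$, and then upgrade $eR\ess r_R(m)$ to equality; part (ii) is handled identically via Theorems \ref{ssip} and \ref{orthog_finite}. Your write-up actually makes explicit the modular-law step that the paper leaves implicit.
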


\begin{proof} $(i)$ $(\Rightarrow )$ This is clear. $(\Leftarrow )$ Let $0 \neq m \in M$ and consider $r_R(m)$ in $R$. Since $R_R$ is $\mathcal{G}$-extending, there exists an $e=e^2 \in R$ such that $r_R(m) \cap eR \leq^{ess} r_R(m)$ and $r_R(m) \cap eR \leq^{ess} eR$. Denote $X = r_R(m) \cap eR$ and consider $L = e^{-1}X = \{ r \in R \ | \ er \in X \}$. Now, $L_R \leq^{ess} R_R$. Therefore, $meL = 0$ so $me \in Z(M) = \{ 0 \}$. Thus $e \in r_R(m)$, and $r_R(m) = eR$.  

$(ii)$ Combining Theorems \ref{ssip} and \ref{orthog_finite} and part $(i)$, we obtain the result.
\end{proof}

\section{Links with Projectivity} \label{projectivity_section}


From Corollary \ref{baercor}, we know that if $R$ is a Baer ring then every projective module is s.Baer. That result and the first result of this section motivate the following question: \textit{When is every s.Baer (s.Rickart) module projective?} Observe that even for a Baer ring not every s.Baer module is projective (e.g., $\mathbb{Q}_\mathbb{Z}$ is s.Baer, by Proposition \ref{indecomposable}, but $\mathbb{Q}_\mathbb{Z}$ is not projective). This question seems both natural and interesting since in every s.Rickart (hence s.Baer) module all cyclic submodules are projective. In this section, we answer the question when $R$ is a right cononsingular ring. We also determine a class of generalized triangular matrix rings satisfying the condition that every s.Baer (s.Rickart) module is projective.


\begin{thm} \label{essprojective}
Every s.Rickart module is an essential extension of a projective module.
\end{thm}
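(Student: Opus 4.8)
The plan is to exhibit, inside any s.Rickart module $M_R$, a projective submodule $P$ with $P \ess M$, built as a direct sum of cyclic submodules. The starting observation is that every cyclic submodule of $M$ is projective: for $0 \neq n \in M$ the s.Rickart hypothesis gives $r_R(n) = eR$ with $e = e^2 \in R$, whence $nR \iso R/r_R(n) = R/eR \iso (1-e)R$, a direct summand of $R_R$ and therefore projective. This is simply the module-theoretic restatement of the fact, recorded in the Introduction, that in a s.Rickart module all cyclic submodules are projective.

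Next I would run a Zorn's lemma argument. Consider the collection of all families $\{ n_\lambda R \}_{\lambda \in \Lambda}$ of nonzero cyclic submodules of $M$ whose sum is direct, partially ordered by inclusion of families. Independence of a family of submodules is a finitary condition: it is equivalent to $n_\lambda R \cap \sum_{\mu \neq \lambda} n_\mu R = 0$ for every $\lambda$, and each such equation involves only finitely many summands at a time. Hence the union of a chain of such families is again independent and so is an upper bound in the poset. Zorn's lemma then yields a maximal independent family $\{ n_\lambda R \}_{\lambda \in \Lambda}$; I set $P = \bigoplus_{\lambda \in \Lambda} n_\lambda R \leq M$.

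Since each $n_\lambda R$ is projective by the first step and a direct sum of projective modules is projective, $P$ is projective. It remains to verify $P \ess M$. Suppose not: then there is $0 \neq N \leq M$ with $N \cap P = 0$. Choosing $0 \neq n \in N$ (so that $n \in nR$ forces $nR \neq 0$), we get $nR \cap P \subseteq N \cap P = 0$, so adjoining $nR$ to $\{ n_\lambda R \}$ produces a strictly larger independent family of nonzero cyclic submodules, contradicting maximality. Thus every nonzero submodule of $M$ meets $P$ nontrivially, i.e. $P \ess M$, and $M$ is an essential extension of the projective module $P$.

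I expect the only delicate point to be the bookkeeping in the Zorn step, namely confirming that directness passes to unions of chains (its finitary character) and that a maximal family cannot be enlarged, rather than anything structural. Once cyclic submodules are known to be projective and direct sums of projectives are recognized as projective, the essentiality falls out immediately from maximality, so the argument should be short and self-contained.
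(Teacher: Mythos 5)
Your proposal is correct and follows essentially the same route as the paper: both arguments observe that cyclic submodules of a s.Rickart module are projective, apply Zorn's lemma to obtain a maximal independent family of cyclic submodules, and derive essentiality of the resulting projective direct sum from maximality. No gaps.
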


\begin{proof} 
Let $M_R$ be a s.Rickart module and let $\{ m_\gamma R \}_{\gamma \in \Gamma}$ be the family of all cyclic submodules of M indexed by the set $\Gamma$. Consider the family of sets $\mathcal{P} = \{ \Omega_i \ | \ i \in I \}$ where $ \Omega_i \subseteq \Gamma$ for all $i \in I$ and $\sum_{\Omega_i} m_\omega R$ is a direct sum. Observe that $\mathcal{P}$ is a nonempty poset ordered by set inclusion. Consider an arbitrary chain $\mathcal{C}$ in $\mathcal{P}$ where $\mathcal{C} = \{ \Omega_\lambda \ | \ \lambda \in \Lambda, \Lambda \subseteq I \}$. Let $\Omega = \cup_{\Lambda} \Omega_\lambda$. We will show $\Omega \in \mathcal{P}$. Suppose $\sum_{\Omega} m_\omega r_\omega = 0$, where $m_\omega r_\omega \in m_\omega R$ and $m_\omega r_\omega \neq 0$ for finitely many $\omega$. This finite collection $\{\omega \in \Omega \mid m_\omega r_\omega \neq 0 \}$ will lie in some $\Omega_\lambda$, but $\Omega_\lambda \in \mathcal{P}$. Hence all $m_\omega r_\omega = 0$ and thus we have that $\Omega \in \mathcal{P}$. By Zorn's lemma, there exists a set $\Omega_\Upsilon \subseteq \mathcal{P}$ maximal with respect to $\sum_{\Omega_\Upsilon} m_\epsilon R$ being a direct sum. Thus $M$ contains a maximal direct sum of cyclic submodules. Furthermore each cyclic is projective since $M$ is s.Rickart, hence $\bigoplus_{\Omega_\Upsilon} m_\epsilon R$ is projective. Let us denote $\bigoplus_{\Omega_\Upsilon} m_\epsilon R = K$ and show it is essential in M. Suppose, to the contrary, there is $0 \neq S_R \leq M_R$ where $K \cap S = 0$. Then $K \cap sR = 0$ for all $s \in S$. Thus $K + sR$ is a direct sum which contradicts the maximality of $\Omega_\Upsilon$.
\end{proof}

Thus far we have seen that the SSIP is a useful tool for gauging which modules may be s.Baer. For the remainder of this section and throughout the next, the following lemma will prove to be valuable to us.
 

\begin{lem} \label{triangular_ssip_lemma}
Let $R =  \left( \begin{smallmatrix} A & M \\ 0 & C \\ \end{smallmatrix} \right)$ where $R$ is a generalized triangular matrix ring such that $A$ and $C$ are rings and $M$ an $(A,C)$-bimodule. Further suppose that $A_A$ and $C_C$ are indecomposable. Then $r_C(m) = 0$ for all $0 \neq m \in M$ if and only if $R_R$ has the SSIP.
\end{lem}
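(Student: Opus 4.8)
The plan is to reduce the SSIP condition to an explicit computation with the (few) idempotents of $R$. First I would determine all idempotents of $R$. Writing a typical element as $\left( \begin{smallmatrix} a & x \\ 0 & c \end{smallmatrix}\right)$, the idempotent equations are $a^2 = a$, $c^2 = c$, and $ax + xc = x$. Because $A_A$ and $C_C$ are indecomposable, the only idempotents of $A$ and of $C$ are $0$ and $1$, so $a,c \in \{0,1\}$; running through the four choices and solving $ax+xc=x$ shows the idempotents of $R$ are exactly $0$, $1$, $e_x := \left( \begin{smallmatrix} 1 & x \\ 0 & 0 \end{smallmatrix}\right)$, and $f_x := \left( \begin{smallmatrix} 0 & x \\ 0 & 1 \end{smallmatrix}\right)$ for $x \in M$ (the cases $(a,c)=(0,0),(1,1)$ force $x=0$, while $(1,0)$ and $(0,1)$ impose no condition on $x$).

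Next I would compute the corresponding direct summands $eR$. A direct calculation gives $e_x R = \left( \begin{smallmatrix} A & M \\ 0 & 0 \end{smallmatrix}\right) =: P$ independently of $x$, and $f_x R = \{ \left( \begin{smallmatrix} 0 & xc \\ 0 & c \end{smallmatrix}\right) : c \in C \} =: Q_x$. Thus the complete list of direct summands of $R_R$ is $\{0, R, P\} \cup \{ Q_x : x \in M \}$, and one checks $P \cap Q_x = 0$ for every $x$ (an element of $Q_x$ lies in $P$ only when its $(2,2)$-entry vanishes). Since every summand is one of these, proving the SSIP amounts to showing that every intersection of members of this list again belongs to the list.

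For the forward implication, assume $r_C(m)=0$ for all $0 \neq m \in M$ and take an arbitrary family of summands. After discarding copies of $R$: if a $0$ appears the intersection is $0$; if only copies of $P$ remain it is $P$; if both $P$ and some $Q_x$ occur it lies in $P \cap Q_x = 0$. The one remaining case is an intersection $\bigcap_i Q_{x_i}$ of summands of the second kind alone. The key computation is that $\left( \begin{smallmatrix} 0 & y \\ 0 & c \end{smallmatrix}\right) \in \bigcap_i Q_{x_i}$ forces $y = x_i c$ for every $i$; if two of the $x_i$ differ, say $x_a \neq x_b$, then $(x_a-x_b)c=0$ together with the hypothesis $r_C(x_a-x_b)=0$ forces $c=0$ and hence $y=0$, so the intersection is $0$, whereas if all $x_i$ coincide the intersection is a single $Q_x$. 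Either way the intersection is a summand, so $R_R$ has the SSIP.

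For the converse I would argue contrapositively: if $r_C(m) \neq 0$ for some $0 \neq m$, then the two summands $Q_0 = \left( \begin{smallmatrix} 0 & 0 \\ 0 & C \end{smallmatrix}\right)$ and $Q_m$ intersect in $\{ \left( \begin{smallmatrix} 0 & 0 \\ 0 & c \end{smallmatrix}\right) : c \in r_C(m) \}$, whose set of $(2,2)$-entries is $r_C(m)$, a nonzero proper subset of $C$ (proper since $m\cdot 1 = m \neq 0$). This set is therefore neither $0$ nor of the form $Q_x$ (each $Q_x$ has $(2,2)$-entries filling all of $C$), so it is not a direct summand; hence even the SIP, and a fortiori the SSIP, fails. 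I expect the main work to be the idempotent bookkeeping of the first step --- in particular correctly invoking the indecomposability of $A_A$ and $C_C$ to pin down the idempotents and noticing that $e_x R$ does not depend on $x$ --- after which the intersection computations are routine.
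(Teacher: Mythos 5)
Your proposal is correct and follows essentially the same route as the paper's proof: classify the idempotents of $R$ using the indecomposability of $A_A$ and $C_C$, compute the two families of summands $\left(\begin{smallmatrix} A & M \\ 0 & 0 \end{smallmatrix}\right)$ and $\left(\begin{smallmatrix} 0 & xC \\ 0 & C \end{smallmatrix}\right)$, reduce arbitrary intersections to the pairwise cases, and detect failure of the SSIP via the intersection $\left(\begin{smallmatrix} 0 & 0 \\ 0 & r_C(m) \end{smallmatrix}\right)$. Your write-up is if anything slightly more explicit than the paper's (e.g., in justifying why that last intersection cannot be a summand).
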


\begin{proof}
$(\Leftarrow)$ First, note that if $M = 0$, then $R_R$ has the SSIP. Thus, let $M \neq 0$. Observe that all nontrivial idempotents are of the form $ \left( \begin{smallmatrix} 1 & m \\ 0 & 0 \\ \end{smallmatrix} \right)$ and $ \left( \begin{smallmatrix} 0 & n \\ 0 & 1 \\ \end{smallmatrix} \right)$ where $m,n \in M$. Assume $R_R$ has the SSIP and let $0 \neq n \in M$. Then $\left( \begin{smallmatrix} 0 & n \\ 0 & 1 \\ \end{smallmatrix} \right) R \cap \left( \begin{smallmatrix} 0 & 0 \\ 0 & 1 \\ \end{smallmatrix} \right) R = \left( \begin{smallmatrix} 0 & 0 \\ 0 & r_C(n) \\ \end{smallmatrix} \right).$ Since $R_R$ has the SSIP, $r_C(n) = 0$.

$(\Rightarrow )$ Conversely, assume $r_C(m) = 0$ for all $0 \neq m\in M$. Now observe that $ \left( \begin{smallmatrix} 1 & m \\ 0 & 0 \\ \end{smallmatrix} \right) R = \left( \begin{smallmatrix} A & M \\ 0 & 0 \\ \end{smallmatrix} \right)
$ and $\left( \begin{smallmatrix} 0 & n \\ 0 & 1 \\ \end{smallmatrix} \right)R = \{ \left( \begin{smallmatrix} 0 & nc \\ 0 & c \\ \end{smallmatrix} \right) \mid c \in C \}$. A routine argument will show $\left( \begin{smallmatrix} 0 & n \\ 0 & 1 \\ \end{smallmatrix} \right)R \cap \left( \begin{smallmatrix} 1 & m \\ 0 & 0 \\ \end{smallmatrix} \right) R = 0_R$. Next consider $x \in \left( \begin{smallmatrix} 0 & n \\ 0 & 1 \\ \end{smallmatrix} \right) R \cap \left( \begin{smallmatrix} 0 & k \\ 0 & 1 \\ \end{smallmatrix} \right) R $ for $n \neq k \in M$. Then $x =\left( \begin{smallmatrix} 0 & nd \\ 0 & d \\ \end{smallmatrix} \right) = \left( \begin{smallmatrix} 0 & kc \\ 0 & c \\ \end{smallmatrix} \right)$ for some $d,c \in C$. If $c$ or $d$ is zero, we are done. Assume that $c \neq 0$ and $d \neq 0$. If $n \neq 0$ then $c = d$ and $nc = kc$, i.e., $(n-k)c = 0$. Since $c \neq 0$, $n - k = 0$ implies $n = k$, a contradiction. If $n = 0$ then $kc = 0$. Since $k \neq 0$, $c = 0$, a contradiction. Finally, if we consider an arbitrary intersection of nonzero direct summands, it necessarily reduces to the cases presented above. Therefore $R_R$ has the SSIP.
\end{proof}

Recall that in any domain every principal right ideal is projective. However $T_2(\Z)$ is not a right Rickart ring, so not every principal right ideal is projective. Thus it is natural to ask: \textit{If $C$ is a domain, which principal right ideals of $T_2(C)$ are projective?}


\begin{thm} \label{gen_triangular_matrix_ring_thm}
Let $R =  \left( \begin{smallmatrix} A & M \\ 0 & C \\ \end{smallmatrix} \right)$ and $K = \left( \begin{smallmatrix} 0 & M \\ 0 & C \\ \end{smallmatrix} \right)$, where $R$ is a generalized triangular matrix ring such that $A$ and $C$ are rings and $M$ is an $(A,C)$-bimodule.
\begin{enumerate}[(i)]
\item $K_R$ is s.Rickart (s.Baer) if and only if $M_C$ and $C_C$ are s.Rickart (s.Baer).
\item Assume $A$ and $C$ are domains and for each $0 \neq m \in M$, $l_A(m) = 0$ and $r_C(m)=0$. Then $r_R( \left( \begin{smallmatrix} a & m \\ 0 & c \\ \end{smallmatrix} \right) ) \neq eR$ for any $e = e^2 \in R$ if and only if $a \neq 0, m \neq 0, c = 0$ and $mC \nsubseteq aM$. Moreover, if $R_R$ is not s.Rickart, then $K$ is the largest s.Rickart submodule of $R$, and all s.Rickart $R$-modules are s.Baer.
\end{enumerate}
\end{thm}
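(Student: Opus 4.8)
The plan is to treat part~(ii), using part~(i) as a tool. First I would pin down the idempotents of $R$: since $A$ and $C$ are domains their only idempotents are $0$ and $1$, so (as already noted in Lemma~\ref{triangular_ssip_lemma}) the nontrivial idempotents of $R$ are exactly $\left(\begin{smallmatrix} 1 & n \\ 0 & 0 \end{smallmatrix}\right)$ and $\left(\begin{smallmatrix} 0 & n \\ 0 & 1 \end{smallmatrix}\right)$ with $n \in M$. Writing a candidate annihilating element as $\left(\begin{smallmatrix} x & y \\ 0 & z \end{smallmatrix}\right)$, the product with $v = \left(\begin{smallmatrix} a & m \\ 0 & c \end{smallmatrix}\right)$ vanishes iff $ax = 0$, $ay + mz = 0$, and $cz = 0$. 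Using that $A,C$ are domains together with $l_A(m)=0$ and $r_C(m)=0$ (so left multiplication by a nonzero $a$ is injective on $M$, and $mz=0 \Rightarrow z=0$), I would dispatch every case except $a\neq 0,\ m\neq 0,\ c=0$: if $a=0$ the annihilator is $R$ (when $m=0$) or $\left(\begin{smallmatrix} 1 & 0 \\ 0 & 0 \end{smallmatrix}\right)R$ (when $m\neq 0$); if $a\neq 0$ and $c\neq 0$ it collapses to $0 = 0\cdot R$; and if $a\neq 0,\ m=0,\ c=0$ it is $\left(\begin{smallmatrix} 0 & 0 \\ 0 & 1 \end{smallmatrix}\right)R$. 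Thus outside the crux case $r_R(v)$ is idempotent-generated, which gives the forward direction of the equivalence.

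For the crux case $a\neq 0,\ m\neq 0,\ c=0$, the equation forces $x=0$, so $r_R(v)\subseteq K$ and equals the graph $\{\left(\begin{smallmatrix} 0 & y \\ 0 & z \end{smallmatrix}\right) : ay = -mz\}$. Because $a\cdot$ is injective on $M$, for each $z$ there is at most one admissible $y$, existing precisely when $mz\in aM$; the set $Z = \{z\in C : mz\in aM\}$ is a right ideal of $C$ and $z\mapsto y$ is a right $C$-linear map $\phi:Z\to M$, so $r_R(v)$ is the graph of $\phi$ over $Z$. I would then match this against the only idempotent-generated right ideals sitting inside $K$, namely $\left(\begin{smallmatrix} 0 & n \\ 0 & 1 \end{smallmatrix}\right)R = \{\left(\begin{smallmatrix} 0 & nz \\ 0 & z \end{smallmatrix}\right) : z\in C\}$ and $0$. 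The graph of $\phi$ has this form iff $Z=C$, i.e.\ iff $mC\subseteq aM$ (any $n$ with $an=-m$ then serving as generator), which yields the stated equivalence $r_R(v)\neq eR$ for all idempotents $e$ iff $mC\not\subseteq aM$. \emph{The main obstacle is precisely this matching step}: one must rule out every idempotent generator for a graph lying over a proper right ideal $Z\subsetneq C$, and in particular confirm that when $mC\not\subseteq aM$ the ideal $Z$ is still nonzero so that the annihilator is genuinely non-principal (e.g.\ in the motivating case $A=C$, $M=C$ of $T_2(C)$ one has $a\in Z$, so $Z\neq 0$).

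Finally, for the ``moreover'' clause I would proceed in three moves. \emph{$K$ is s.Rickart}: by part~(i) this reduces to $M_C$ and $C_C$ being s.Rickart, which is immediate since $C$ is a domain and $r_C(m)=0$ for all $0\neq m$. \emph{Every s.Rickart module is s.Baer}: since $A$ and $C$ are domains, $A_A$ and $C_C$ are indecomposable, so Lemma~\ref{triangular_ssip_lemma} applies and the hypothesis $r_C(m)=0$ gives that $R_R$ has the SSIP; then Theorem~\ref{ssip}, $(i)\Rightarrow(ii)$, upgrades s.Rickart to s.Baer for every module. \emph{$K$ is the largest s.Rickart submodule}: suppose $L\leq R_R$ is s.Rickart but $L\not\subseteq K$, and choose $v=\left(\begin{smallmatrix} a & m \\ 0 & c \end{smallmatrix}\right)\in L$ with $a\neq 0$. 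Since $R_R$ is not s.Rickart, the first part produces a witness $w=\left(\begin{smallmatrix} \alpha & \mu \\ 0 & 0 \end{smallmatrix}\right)$ with $\alpha\neq 0$, $\mu\neq 0$, $\mu C\not\subseteq \alpha M$; right-multiplying gives $v\,w=\left(\begin{smallmatrix} a\alpha & a\mu \\ 0 & 0 \end{smallmatrix}\right)\in L$, and injectivity of $a\cdot$ transports $\mu C\not\subseteq \alpha M$ to $(a\mu)C\not\subseteq (a\alpha)M$ while preserving $Z_{vw}=Z_{w}\neq 0$. Hence $v\,w$ falls in the crux case with $r_R(v\,w)\neq eR$, so $L$ contains an element whose right annihilator is not idempotent-generated, contradicting that $L$ is s.Rickart. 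Therefore $L\subseteq K$, and $K$ is the largest s.Rickart submodule of $R$.
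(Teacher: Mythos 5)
Your overall route through part (ii) is essentially the paper's: classify the idempotents of $R$ (only $0$, $1$, $\left(\begin{smallmatrix} 1 & n \\ 0 & 0 \end{smallmatrix}\right)$, $\left(\begin{smallmatrix} 0 & n \\ 0 & 1 \end{smallmatrix}\right)$), run the case analysis on $(a,m,c)$, isolate the crux case $a\neq 0$, $m\neq 0$, $c=0$, and match $r_R(v)$ against the only idempotent-generated right ideals that can sit inside $\left(\begin{smallmatrix} 0 & M \\ 0 & C \end{smallmatrix}\right)$; your three moves for the ``moreover'' clause ($K$ s.Rickart via (i), SSIP via Lemma \ref{triangular_ssip_lemma} plus Theorem \ref{ssip}, and the product $vw$ with a witness $w$ for maximality) are exactly the paper's. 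Two small remarks first: you use part (i) without proving it --- it is routine, since $R$ acts on $K$ through the projection onto $C$, so $K_R\cong (M\oplus C)_C$ and one quotes \cite[Theorem 2.7]{LC} and Proposition \ref{finite_dsum}, but it is part of the statement --- and in your quick case list, when $a=0$, $m=0$, $c\neq 0$ the annihilator is $\left(\begin{smallmatrix} 1 & 0 \\ 0 & 0 \end{smallmatrix}\right)R$ rather than $R$; this is harmless since it is still idempotent-generated.

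The genuine gap is the one you yourself flag as ``the main obstacle'' and then leave open: in the crux case with $mC\not\subseteq aM$ you must also exclude $r_R(v)=0$, because $0=0\cdot R$ is generated by the idempotent $0$. Equivalently you need $Z=\{z\in C : mz\in aM\}\neq 0$, i.e.\ $aM\cap mC\neq 0$. You verify this only for $M=C$ via ``$a\in Z$'', which itself uses $ma\in aC$ --- true when $C$ is commutative or right Ore, which covers the paper's corollaries, but not for a general domain (in $T_2(k\langle s,t\rangle)$ with $a=t$, $m=s$ one has $tC\cap sC=0$, hence $Z=0$). For a general bimodule the nonvanishing can genuinely fail: take $A=C=k[x]$ and $M=k[x,y]$ with $A$ acting by multiplication by $x$ and $C$ acting on the right through substitution of $y$; for $v=\left(\begin{smallmatrix} x & 1 \\ 0 & 0 \end{smallmatrix}\right)$ one gets $aM\cap mC=xk[x,y]\cap k[y]=0$, so $r_R(v)=0R$ even though $mC\not\subseteq aM$. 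So this step cannot be closed in the stated generality without either reading ``$eR$'' with $e$ a nonzero idempotent or adding a hypothesis such as $aM\cap mC\neq 0$. You should know, however, that the paper's own proof of the Claim makes the identical silent assumption --- it asserts ``then $e=\left(\begin{smallmatrix} 0 & n \\ 0 & 1 \end{smallmatrix}\right)$'' without ruling out $e=0$ --- so you have not missed an idea that the paper supplies; you have located the one point at which both arguments are incomplete. The same unestablished nonvanishing is then needed again in the maximality argument (your ``$Z_{vw}=Z_w\neq 0$''), since that argument applies the forward direction of the Claim to $vw$.
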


\begin{proof}

$(i)$ Note that $K_R$ is s.Rickart (s.Baer) if and only if $K_C$ is s.Rickart (s.Baer). The s.Rickart part follows from \cite[Theorem 2.7]{LC} and the s.Baer part from Proposition \ref{finite_dsum}.

$(ii)$ Recall that all idempotents of $R$ are of the form $0_R$, $\left( \begin{smallmatrix} 0 & x \\ 0 & 1  \\ \end{smallmatrix} \right)$, $\left( \begin{smallmatrix} 1 & y \\ 0 & 0  \\ \end{smallmatrix} \right)$, or $1_R$ for $x,y \in M$. We examine $r_R( \left( \begin{smallmatrix} a & m \\ 0 & c \\ \end{smallmatrix} \right) )$ on a case-by-case basis.\\
\textbf{Case 1:} $m = 0$.
	\begin{enumerate}[(I)]
	\item  $a = 0$ and $c \neq 0$ if and only if $r_R ( \left( \begin{smallmatrix} a & m \\ 0 & c \\ \end{smallmatrix} \right) ) = \left( \begin{smallmatrix} 1 & 0 \\ 0 & 0 \\ \end{smallmatrix} \right) R$.
	\item Suppose $a \neq 0$.
		\begin{enumerate}[(i)]
		\item  $c = 0$ if and only if $r_R ( \left( \begin{smallmatrix} a & m \\ 0 & c \\ \end{smallmatrix} \right) ) = \left( \begin{smallmatrix} 0 & 0 \\ 0 & 1 \\ \end{smallmatrix} \right) R$.
		\item  $c \neq 0$ if and only if $r_R ( \left( \begin{smallmatrix} a & m \\ 0 & c \\ \end{smallmatrix} \right) ) = \{ 0_R \}$.
		\end{enumerate}	
	\end{enumerate}
\textbf{Case 2:} $m \neq 0$.
	\begin{enumerate}[(I)]

	\item $a \neq 0$ and $c \neq 0$ if and only if $r_R ( \left( \begin{smallmatrix} a & m \\ 0 & c \\ \end{smallmatrix} \right) ) = \{ 0_R \}$.

	\item \textbf{Claim:} \textit{Assume $a\neq 0$ and $c = 0$. Then $r_R( \left( \begin{smallmatrix} a & m \\ 0 & c \\ \end{smallmatrix} \right)) = eR$ for some $e = e^2 \in R$ if and only if $mC \subseteq aM$.}\\
	\textit{Proof of Claim.} Assume that $r_R( \left( \begin{smallmatrix} a & m \\ 0 & 0 \\ \end{smallmatrix} \right) ) = eR$ for some $e = e^2 \in R$. Then $e = \left( \begin{smallmatrix} 0 & n \\ 0 & 1 \\ \end{smallmatrix} \right)$ for some $n \in M$. Hence $m = a(-n)$, so $mC \subseteq aM$.
	
Conversely, assume $mC \subseteq aM$. Then $m = ak$ for some $k \in M$. Suppose $\left( \begin{smallmatrix} a & m \\ 0 & 0 \\ \end{smallmatrix} \right) \left( \begin{smallmatrix} 0 & n \\ 0 & d \\ \end{smallmatrix} \right) = \left( \begin{smallmatrix} 0 & an + md \\ 0 & 0 \\ \end{smallmatrix} \right) = \left( \begin{smallmatrix} 0 & 0 \\ 0 & 0 \\ \end{smallmatrix} \right)$. Thus $0 = an + md = an + akd = a(n +kd)$ and so $n = -kd$. Therefore $r_R( \left( \begin{smallmatrix} a & m \\ 0 & 0 \\ \end{smallmatrix} \right) ) = \left( \begin{smallmatrix} 0 & -k \\ 0 & 1 \\ \end{smallmatrix} \right) R$. 

	\item $a=0$ and $c=0$ if and only if $r_R ( \left( \begin{smallmatrix} a & m \\ 0 & c \\ \end{smallmatrix} \right) ) = \left( \begin{smallmatrix} 1 & 0 \\ 0 & 0 \\ \end{smallmatrix} \right) R$.
	
	\item $a=0$ and $c \neq 0$ if and only if $r_R ( \left( \begin{smallmatrix} a & m \\ 0 & c \\ \end{smallmatrix} \right) ) = \left( \begin{smallmatrix} 1 & 0 \\ 0 & 0 \\ \end{smallmatrix} \right) R$.
	\end{enumerate}
Note that we have exhausted all cases. Therefore $r_R( \left( \begin{smallmatrix} a & m \\ 0 & c \\ \end{smallmatrix} \right) ) \neq eR$ for any $e = e^2 \in R$ if and only if $a \neq 0, m \neq 0, c = 0$ and $mC \nsubseteq aM$.

Assume $R_R$ is not s.Rickart. To see that $K_R$ is the largest s.Rickart submodule of $R$, let $X$ be a s.Rickart submodule of $R_R$. If $X \subseteq K$, we are done. Otherwise there exists $y = \left( \begin{smallmatrix} a & m \\ 0 & c \\ \end{smallmatrix} \right)$, where $a \neq 0$. Observe that each element of $yR$ has its right annihilator generated by an idempotent. Since $R_R$ is not s.Rickart, there exists $\left( \begin{smallmatrix} b & n \\ 0 & 0 \\ \end{smallmatrix} \right) \in R$ where $b \neq 0, n \neq 0$ and $nC \nsubseteq bM$. Then $ \left( \begin{smallmatrix} a & m \\ 0 & c \\ \end{smallmatrix} \right) \left( \begin{smallmatrix} b & n \\ 0 & 0 \end{smallmatrix} \right) = \left( \begin{smallmatrix} ab & an \\ 0 & 0 \end{smallmatrix} \right)$. By the first part of the proof of $(ii)$, $anC \subseteq abM$, then $an = abk$ for some $k \in M$. Hence $a(n - bk) = 0$ implies $n = bk$, a contradiction. Therefore $X \subseteq K$. Lemma \ref{triangular_ssip_lemma} and Theorem \ref{ssip} yield that all s.Rickart modules are s.Baer.
\end{proof}

\begin{cor} Let $R$ be as in Theorem \ref{gen_triangular_matrix_ring_thm}. Assume $A$ is a subring of $C$, $M=C$, and $C$ is a domain.
\begin{enumerate}[(i)]
\item $R$ is a right Rickart ring if and only if $R$ is a Baer ring if and only if $C$ is a division ring.
\item If $R$ is not a right Rickart ring, then every (finitely generated) s.Rickart submodule of $R_R$ is projective if and only if every (finitely generated) s.Baer submodule of $R_R$ is projective if and only if $C$ is right (semi)hereditary.
\end{enumerate}
\label{gen_triangular_matrix_cor}
\end{cor}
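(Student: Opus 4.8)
The plan is to run everything through the explicit annihilator computation of Theorem~\ref{gen_triangular_matrix_ring_thm}(ii) together with the SSIP criterion of Lemma~\ref{triangular_ssip_lemma}. First I would record two structural facts that hold throughout: since $C$ is a domain and $A$ is a subring of $C$, the ring $A$ is again a domain, so both $A_A$ and $C_C$ are indecomposable; and since $M=C$, we have $r_C(m)=0$ for every $0\neq m\in M$, so Lemma~\ref{triangular_ssip_lemma} gives that $R_R$ has the SSIP. Recalling that a ring is Baer exactly when it is right Rickart with the SSIP, this already collapses two of the three conditions in (i): $R$ is right Rickart if and only if $R$ is Baer. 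For the third, I would feed the case analysis of Theorem~\ref{gen_triangular_matrix_ring_thm}(ii) into the definition: $R$ fails to be right Rickart precisely when some $\left(\begin{smallmatrix} a & m \\ 0 & 0 \end{smallmatrix}\right)$ with $a\neq 0\neq m$ satisfies $mC\nsubseteq aM=aC$. As $M=C$, this says $R$ is right Rickart if and only if $aC=C$ for every nonzero $a$, i.e.\ every nonzero element is right invertible; and the standard argument that a ring in which every nonzero element is right invertible is a division ring (right-invert $b$ in $ab=1$ and cancel) finishes (i).

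For (ii), I would begin by using the hypothesis that $R$ is not right Rickart, equivalently $R_R\notin\sR$. Theorem~\ref{gen_triangular_matrix_ring_thm}(ii) then supplies two facts at once: $K=\left(\begin{smallmatrix} 0 & C \\ 0 & C \end{smallmatrix}\right)$ is the largest s.Rickart submodule of $R_R$, and every s.Rickart $R$-module is s.Baer. Together with Lemma~\ref{hered}, these show that the s.Rickart submodules of $R_R$ are precisely the submodules of $K$, and that they coincide with the s.Baer submodules; the equivalence of the two projectivity statements in (ii) is then immediate, and the whole problem reduces to showing that every submodule of $K$ is projective if and only if $C$ is right hereditary, with the finitely generated version yielding semihereditary.

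The bridge between $R$ and $C$ is a change-of-rings argument that I would set up as follows. A direct computation shows the right $R$-action on $K$ factors through the ring surjection $\pi\colon R\to C$, $\left(\begin{smallmatrix} a & m \\ 0 & c \end{smallmatrix}\right)\mapsto c$, and that $K\iso C^2$ as right $C$-modules; since $\pi$ is onto, the $R$-submodules of $K$ are exactly the $C$-submodules of $C^2$, with the same notion of finite generation. Because $C\iso\left(\begin{smallmatrix} 0 & 0 \\ 0 & C \end{smallmatrix}\right)=e_{22}R$ is a direct summand of $R_R$ and hence $R$-projective, a module on which $R$ acts through $\pi$ is $R$-projective if and only if it is $C$-projective: the easy direction exhibits a $C$-projective as a summand of a free $C$-module $\iso(e_{22}R)^{(I)}$, and the converse uses the identification $Hom_R(N,-)=Hom_C(N,-)$ on $C$-modules to transfer exactness of $Hom$. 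Finally, since every right ideal of $C$ embeds in $C\subseteq C^2$ and $C^2$ is free, the statements ``every submodule of $C^2$ is projective,'' ``every right ideal of $C$ is projective,'' and ``$C$ is right hereditary'' form a cycle of implications and are therefore equivalent; restricting to finitely generated submodules throughout gives the semihereditary analogue.

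The hard part will be the descent direction of the change-of-rings step, namely that $R$-projectivity of a module killed by $\ker\pi$ forces $C$-projectivity. The lift (from $C$ to $R$) is cheap because $C$ is $R$-projective, but the descent needs the identification of $Hom_R$ with $Hom_C$ on modules annihilated by $\ker\pi$, together with the observation that an $R$-epimorphism between two such modules is automatically a $C$-epimorphism, so that $R$-exactness of $Hom_R(N,-)$ reads off as $C$-exactness of $Hom_C(N,-)$. Once this equivalence of projectivity is secured, translating between right ideals of $C$ and submodules of $K$ and invoking the hereditary/semihereditary characterizations is routine.
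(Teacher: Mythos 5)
Your reduction of part (i) to ``right Rickart $\iff$ Baer'' via Lemma \ref{triangular_ssip_lemma} and the SSIP is correct and matches the paper. The gap is in the last step. The case analysis of Theorem \ref{gen_triangular_matrix_ring_thm}(ii) quantifies $a$ over $A$ only: $R$ fails to be right Rickart precisely when there exist $0\neq a\in A$ and $0\neq m\in M=C$ with $mC\nsubseteq aC$. Carried out carefully, your computation therefore yields ``$R$ is right Rickart $\iff$ $aC=C$ for every nonzero $a\in A$,'' i.e.\ every nonzero element \emph{of $A$} is right invertible \emph{in $C$}. Your next clause silently upgrades this to ``every nonzero element is right invertible,'' as if $a$ ranged over all of $C$; that is exactly the statement needed for the division-ring conclusion, but it is not what the annihilator computation delivers unless $A=C$. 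Worse, the honest version of your criterion shows the asserted equivalence cannot be recovered by any repair within your framework: for $A=\Q\subseteq C=\Q[t]$ every nonzero element of $A$ is invertible in $C$, so by your (correct) criterion and the SSIP, $R=\left(\begin{smallmatrix} \Q & \Q[t] \\ 0 & \Q[t] \end{smallmatrix}\right)$ is Baer, yet $C$ is not a division ring. The paper proves this direction by a different mechanism --- it takes $0\neq c\in C$, asserts $l_R\bigl(\left(\begin{smallmatrix} 0 & 1 \\ 0 & c \end{smallmatrix}\right)\bigr)\neq 0$, and extracts $x\in M$ with $1+xc=0$ --- but that left annihilator equals $\{\left(\begin{smallmatrix} x & y \\ 0 & 0 \end{smallmatrix}\right) \mid x+yc=0,\ x\in A,\ y\in C\}$ and its nonvanishing also depends on how $A$ sits inside $C$ (it vanishes in the example above). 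So rather than forcing your computation to land on the stated conclusion, you should report the discrepancy: your approach is sound up to the quantifier slip, and what it actually proves is the $A$-relative condition, which agrees with ``$C$ is a division ring'' in the case $A=C$ (the case used in the following corollary on $T_2(C)$) but not for arbitrary subrings.

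Part (ii) of your proposal is essentially correct and takes a genuinely different route from the paper. The identification of the s.Rickart submodules of $R_R$ with the submodules of $K$, and of these with the s.Baer submodules, matches the paper. For the projectivity transfer, the paper argues directly inside $R$: it defines $h\bigl(\left(\begin{smallmatrix} 0 & c_1 \\ 0 & c_2 \end{smallmatrix}\right)\bigr)=\left(\begin{smallmatrix} 0 & 0 \\ 0 & c_1+c_2 \end{smallmatrix}\right)$, observes that $h(X)$ and $\ker h$ are (isomorphic to) right ideals of $C$, hence projective, and splits the resulting short exact sequence --- needing coherence of $C$ in the semihereditary case to see that $\ker h$ is finitely generated. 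Your change-of-rings argument (the $R$-action on $K$ factors through $\pi\colon R\to C$, $K_C\iso C^2$, and $R$-projectivity agrees with $C$-projectivity on modules killed by $\ker\pi$ because $\left(\begin{smallmatrix} 0 & 0 \\ 0 & C \end{smallmatrix}\right)\dsum R_R$) is cleaner and invokes the standard hereditary/semihereditary characterizations for submodules of $C^2$ directly. One simplification: the descent direction you label as hard is the easy one --- apply $-\otimes_R C$ to a decomposition $F=N\oplus N'$ of a free $R$-module to exhibit $N$ as a direct summand of a free $C$-module; no analysis of $Hom_R$ versus $Hom_C$ is needed.
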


\begin{proof}
$(i)$ Lemma \ref{triangular_ssip_lemma} and Theorem \ref{ssip} give us that $R_R$ is s.Rickart if and only if $R$ is Baer. If $C$ is a division ring, it is well known that $T_2(C)$ is a Baer ring \cite[p. 16]{K1}. Since $R$ has the same idempotents as $T_2(C)$, $R$ is Baer \cite[p. 16]{K1}. Now assume that $R$ is a Baer ring. Let $0 \neq c \in C$. Then $0 \neq l_R( \left( \begin{smallmatrix} 0 & 1 \\ 0 & c \\ \end{smallmatrix} \right) )$. Hence there exists  $\left( \begin{smallmatrix} 1 & x \\ 0 & 0 \\ \end{smallmatrix} \right) \in R$ such that $l_R( \left( \begin{smallmatrix} 0 & 1 \\ 0 & c \\ \end{smallmatrix} \right) ) = R \left( \begin{smallmatrix} 1 & x \\ 0 & 0 \\ \end{smallmatrix} \right)$. So $ 0 = \left( \begin{smallmatrix} 1 & x \\ 0 & 0 \\ \end{smallmatrix} \right) \left( \begin{smallmatrix} 0 & 1 \\ 0 & c \\ \end{smallmatrix} \right)$ implies $1 + xc = 0$. Then $c$ is invertible and, thus, $C$ is a division ring. 

$(ii)$ The equivalence of the first two statements follow from Lemma \ref{triangular_ssip_lemma} and Theorem \ref{ssip}. From Theorem \ref{gen_triangular_matrix_ring_thm}, $\left( \begin{smallmatrix} 0 & C \\ 0 & C \\ \end{smallmatrix} \right)$ is the largest s.Rickart $R$-submodule contained in $R$.
Moreover, Lemma \ref{hered} establishes that each of its submodules is also s.Rickart.

Now assume that every (finitely generated) s.Rickart submodule of $R_R$ is projective. Then all (finitely generated) submodules of $\left( \begin{smallmatrix} 0 & 0 \\ 0 & C \\ \end{smallmatrix} \right)$ are projective. Therefore $C$ is right (semi)hereditary.

Conversely, let $X_R \leq R_R$ be a (finitely generated) s.Rickart submodule. By Theorem \ref{gen_triangular_matrix_ring_thm} (ii), $X_R \leq K_R$.  Define $h: X \rightarrow \left( \begin{smallmatrix} 0 & 0 \\ 0 & C \\ \end{smallmatrix} \right) $ by $h( \left( \begin{smallmatrix} 0 & c_1 \\ 0 & c_2 \\ \end{smallmatrix} \right) ) = \left( \begin{smallmatrix} 0 & 0 \\ 0 & c_1 + c_2 \\ \end{smallmatrix} \right)$ where $c_i \in C$. Clearly, $h$ is an $R$-homomorphism. Observe that $h(X)$ is isomorphic to a (finitely generated) $C$-submodule of $C$ and hence $h(X)$ is projective since $C$ is right (semi-)hereditary. Furthermore, if $h$ is injective, we are done. Otherwise, assume $c_1 + c_2 = 0$, i.e., $c_1 = -c_2$. Hence, $ker(h) = \{ \left( \begin{smallmatrix} 0 & -c \\ 0 & c \\ \end{smallmatrix} \right) \in X \ \mid \ c \in C \}$. There exists $g: ker(h) \rightarrow \left( \begin{smallmatrix} 0 & 0 \\ 0 & C \\ \end{smallmatrix} \right)$ defined by $g(\left( \begin{smallmatrix} 0 & -c \\ 0 & c \\ \end{smallmatrix} \right) ) = \left( \begin{smallmatrix} 0 & 0 \\ 0 & c \\ \end{smallmatrix} \right)$ where $g$ is injective. When $C$ is right hereditary, $ker(h)$ is projective. To see that $ker(h)$ is projective when $C$ is right semihereditary, observe that $C$ is right coherent \cite[Example 4.46]{L}. From \cite[pp.~8-9]{G} $ker(h)$ is finitely generated hence projective. Now, the short exact sequence $0 \rightarrow ker(h) \rightarrow X \rightarrow h(X) \rightarrow 0$ splits. Therefore, $X$ is projective.
\end{proof}

\begin{cor} Let $C$ be a commutative domain that is not a field. Then $C$ is a (Pr\"{u}fer) Dedekind domain if and only if every (finitely generated) s.Rickart submodule of $T_2(C)$ is projective.
\end{cor}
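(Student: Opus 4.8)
The plan is to obtain this as a direct specialization of Corollary \ref{gen_triangular_matrix_cor}, followed by translating the ring-theoretic hypotheses ``right hereditary'' and ``right semihereditary'' into the classical language of Dedekind and Pr\"{u}fer domains. Concretely, I would apply Corollary \ref{gen_triangular_matrix_cor} with the subring $A = C$ (viewing $C$ inside itself) and $M = C$, so that $R = T_2(C)$.

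First I would check that the standing hypotheses are met. Taking $A = C = M = C$, the conditions $l_A(m) = 0$ and $r_C(m) = 0$ for every $0 \neq m \in M$ hold automatically, since $C$ is a commutative domain and therefore has no zero divisors. Because $C$ is assumed not to be a field, $C$ is not a division ring, and hence Corollary \ref{gen_triangular_matrix_cor}(i) guarantees that $R = T_2(C)$ is \emph{not} a right Rickart ring. This is exactly the hypothesis needed to invoke part (ii). Corollary \ref{gen_triangular_matrix_cor}(ii) then yields: every (finitely generated) s.Rickart submodule of $T_2(C)$ is projective if and only if $C$ is right (semi)hereditary.

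It remains to match the two parenthetical cases against the domain-theoretic notions. For a commutative domain, being right (equivalently, two-sided) hereditary means every ideal is projective, which is precisely the characterization of a Dedekind domain; and being semihereditary means every finitely generated ideal is projective, which characterizes a Pr\"{u}fer domain. Pairing ``finitely generated'' with ``semihereditary'' gives the Pr\"{u}fer case, and the unrestricted statement paired with ``hereditary'' gives the Dedekind case, so the stated equivalences follow. I anticipate no real obstacle here: the only points requiring care are the bookkeeping between the two simultaneous cases and citing the standard facts that a commutative domain is Dedekind exactly when it is hereditary and Pr\"{u}fer exactly when it is semihereditary.
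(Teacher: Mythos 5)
Your proposal is correct and is exactly the intended argument: the paper states this corollary without proof as an immediate specialization of Corollary \ref{gen_triangular_matrix_cor} with $A = M = C$, using that a commutative domain that is not a field is not a division ring (so part (i) rules out the Rickart case) and that hereditary/semihereditary for commutative domains means Dedekind/Pr\"{u}fer. Your verification of the standing hypotheses $l_A(m)=0$ and $r_C(m)=0$ is the right bit of care to include.
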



\begin{thm} Let $R$ be a ring. Then the following are equivalent.
\begin{enumerate}[$(i)$]
\item $M \in \modr$ is s.Baer if and only if $M$ is projective, and $R$ is right cononsingular.

\item $R_R$ is a nonsingular extending module and all s.Rickart $R$-modules are projective.

\item $Z(R_R) = 0$ and all nonsingular $R$-modules are projective.

\item $R$ is left and right hereditary, left and right Artinian, and the maximal left and right rings of quotients of $R$ coincide.

\item $Z(R_R) = 0$ and all free $R$-modules are extending.

\item $R$ has a ring decomposition $R = \bigoplus_{n=1}^{k} A_i$, where each $A_i$ is Morita equivalent to $T_{n_i}(D_i)$ and each $D_i$ is a division ring.
\end{enumerate}
\end{thm}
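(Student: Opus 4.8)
The plan is to prove the six equivalences through a main cycle $(i)\Rightarrow(ii)\Rightarrow(iii)\Rightarrow(iv)\Rightarrow(i)$ carrying the module-theoretic content, and to attach the two explicit descriptions as a structural block $(iv)\Leftrightarrow(vi)$ together with $(iii)/(v)$. The organizing principle is that once $R_R$ is known to be nonsingular and (G-)extending, the three ambient classes coincide: over such a ring a module is s.Rickart iff it is nonsingular (Theorem \ref{extend}(i)), and it is s.Baer iff it is nonsingular (the Chatters--Khuri observation recorded in Example \ref{1stexample}(iii), obtained from Theorems \ref{ssip} and \ref{extend}). The whole statement then reduces to pinning down exactly when, in addition, this common class coincides with the projective modules, which is a structural constraint on $R$.

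For $(i)\Rightarrow(ii)$: specializing $(i)$ to $M=R_R$, which is projective, shows $R_R$ is s.Baer, i.e.\ $R$ is a Baer ring; together with right cononsingularity, \cite[Theorem 2.1]{CK} gives that $R$ is right nonsingular and right extending, so $R_R$ is a nonsingular extending module. If $M$ is s.Rickart then $Z(M)=0$, hence $M$ is s.Baer by Example \ref{1stexample}(iii), hence projective by $(i)$. For $(ii)\Rightarrow(iii)$: $R_R$ nonsingular gives $Z(R_R)=0$, and since $R_R$ is extending (thus $\mathcal{G}$-extending), Theorem \ref{extend}(i) turns any nonsingular $M$ into an s.Rickart module, which is projective by $(ii)$. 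The first genuinely new observation is that $(iii)$ forces $R$ right hereditary: every right ideal is a submodule of the nonsingular module $R_R$, hence nonsingular, hence projective.

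The structural core is $(iii)\Rightarrow(iv)$ and $(iv)\Leftrightarrow(vi)$. From $(iii)$ we have $R$ right nonsingular and right hereditary, so $Q:=Q^{r}_{\mathrm{max}}(R)=E(R_R)$ is von Neumann regular and right self-injective, and as a right $R$-module $Q$ is nonsingular, hence projective. The plan is to extract finiteness from the projectivity of this injective hull: first that $R_R$ has finite uniform dimension, so $Q$ is semisimple Artinian, and then, via right heredity and semiprimariness, that $R$ is right Artinian; the left-handed statements and $Q^{l}_{\mathrm{max}}=Q^{r}_{\mathrm{max}}$ express the symmetry of being a two-sided hereditary Artinian ring with a single semisimple Artinian two-sided quotient ring. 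For $(iv)\Leftrightarrow(vi)$ I would invoke the structure theory of two-sided hereditary Artinian rings: two-sided heredity plus coincidence of the maximal quotient rings forces the underlying species to be a disjoint union of linear ($A_n$-type) pieces, which is exactly the decomposition $R\iso\bigoplus_i A_i$ with $A_i$ Morita equivalent to $T_{n_i}(D_i)$, Morita invariance of heredity, of Artinianness, and of the quotient-ring condition handling the passage between $A_i$ and $T_{n_i}(D_i)$. For $(v)$, the implication $(vi)\Rightarrow(v)$ follows by checking over the serial Artinian rings $T_{n_i}(D_i)$ that free modules split into uniserial summands and hence are extending, while $(v)$ re-enters the cycle through the theory of rings all of whose free modules are extending (cf.\ \cite{DHSW}): already $R_R$ free and extending yields, via \cite{CK}, that $R$ is cononsingular Baer, and the full $\Sigma$-extending hypothesis supplies the finiteness landing in $(iv)$. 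Finally $(iv)\Rightarrow(i)$ closes the loop: the ring is right nonsingular and right extending, so \cite{CK} gives right cononsingular Baer; then projective $\Rightarrow$ s.Baer by Corollary \ref{baercor}, while s.Baer $\Rightarrow$ nonsingular $\Rightarrow$ projective by right heredity, establishing the s.Baer/projective equivalence together with cononsingularity.

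I expect the main obstacle to be the structural step $(iii)\Rightarrow(iv)$, namely converting the soft hypothesis ``every nonsingular module is projective'' into hard finiteness: showing $R$ is two-sided Artinian and controlling both maximal quotient rings. The cleanest leverage is the projectivity of $E(R_R)$, since a projective injective module is very rigid, so I would lean on Goodearl's nonsingular-module theory \cite{G} to force finite Goldie dimension and semisimplicity of $Q$, and on the classification of two-sided hereditary Artinian rings for the explicit decomposition in $(vi)$. Matching the one-sided hypotheses of $(iii)$ against the two-sided conclusions of $(iv)$ (in particular $Q^{l}_{\mathrm{max}}=Q^{r}_{\mathrm{max}}$) is the delicate point, and is precisely where the coincidence of the s.Baer, nonsingular, and projective classes must be bootstrapped to both sides.
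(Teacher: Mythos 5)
Your architecture is sound and, for the module-theoretic implications, essentially coincides with the paper's: the paper runs the short cycle $(i)\Rightarrow(ii)\Rightarrow(iii)\Rightarrow(i)$ exactly as you do for the first two arrows (Baer ring via specializing to $M=R_R$, then \cite[Theorem 2.1]{CK} plus Theorem \ref{ssip}/Theorem \ref{extend} to identify the s.Baer, s.Rickart and nonsingular classes), and it disposes of the entire structural block $(iii)\iff(iv)\iff(v)\iff(vi)$ in one line by citing \cite[12.21]{DHSW} and \cite[Theorem 5.23]{G}, closing the loop with $(iii)\Rightarrow(i)$ rather than $(iv)\Rightarrow(i)$. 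Where you differ is that you route the cycle through $(iv)$ and sketch a from-scratch derivation of the structure theory (projectivity of $E(R_R)$ forcing finite uniform dimension, semiprimary plus hereditary forcing Artinian, the species classification for $(iv)\Leftrightarrow(vi)$); this is more self-contained in spirit but, as you concede, it ultimately leans on the same Goodearl machinery the paper cites, so it buys exposition rather than a genuinely new argument. One concrete misstep to fix: in your closing $(iv)\Rightarrow(i)$ you justify ``s.Baer $\Rightarrow$ nonsingular $\Rightarrow$ projective'' \emph{by right heredity}, but right heredity alone does not make nonsingular modules projective ($\Q_{\Z}$ is nonsingular and not projective over the hereditary ring $\Z$); you need the full strength of $(iii)$, i.e.\ you must first establish $(iv)\Rightarrow(iii)$ (via the structure theorem or the cited equivalences) before you can conclude that nonsingular modules are projective. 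The paper avoids this by proving $(iii)\Rightarrow(i)$ directly, using the already-cited equivalence $(iii)\iff(v)$ to see that $R_R$ is nonsingular extending, hence cononsingular Baer by \cite[Theorem 2.1]{CK}, and then Corollary \ref{baercor} for projective $\Rightarrow$ s.Baer.
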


\begin{proof}
$(iii) \iff (iv) \iff (v) \iff (vi)$ These equivalences follow from \cite[12.21]{DHSW} and \cite[Theorem 5.23]{G}.

$(i) \Rightarrow (ii)$ Since projective modules are s.Baer, then $R$ is a Baer ring. By \cite[Theorem 2.1]{CK}, $R_R$ is a nonsingular extending module. Since $R$ is a Baer ring, $R$ has the SSIP. From Theorem \ref{ssip}, all s.Rickart modules are s.Baer. Hence all s.Rickart modules are projective.

$(ii) \Rightarrow (iii)$ Clearly, $Z(R_R) = 0$. Let $M \in \modr$ such that $Z(M)=0$. From Theorem \ref{extend}, $M$ is s.Rickart. Therefore $M$ is projective.

$(iii) \Rightarrow (i)$ From the equivalence of $(iii)$ and $(v)$, $R_R$ is nonsingular and extending. From \cite[Theorem 2.1]{CK}, $R_R$ is cononsingular and Baer. Since s.Baer modules are nonsingular, every s.Baer module is projective. By Corollary \ref{baercor}, every projective module is s.Baer.
\end{proof}


\section{Torsion Theory} \label{torsion_section}
From Lemmas \ref{hered} and \ref{sRickart_class_extensions}, the class of s.Baer (s.Rickart) modules, $\sB$ $(\sR)$, is closed under submodules and extensions. Subsequently, under certain conditions (see Theorem \ref{dsums} and Theorem \ref{sbaer_is_torsion_free}), $\sB$ is closed under direct products, thereby making it a torsion-free class \cite[p. 137]{St}. Thus it is natural to ask: \textit{If $\sB$ is a torsion-free class, can we characterize the corresponding s.Baer torsion class?} In this section, we address this question. Notation, terminology, and basic results can be found in \cite{St} and \cite{BKN}.


\begin{thm}
The class of s.Baer (s.Rickart) modules is a torsion-free class if and only if it is closed under direct products. In particular, if $R$ is orthogonally finite or if $R_R$ has the SSIP (e.g., $R_R$ is indecomposable), then the class of s.Baer and s.Rickart modules coincide and form a torsion-free class.
\label{sbaer_is_torsion_free}
\end{thm}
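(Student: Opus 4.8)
The plan is to reduce the biconditional to the single closure property named in the statement by invoking the standard characterization of torsion-free classes. Recall from \cite[p.~137]{St} that a class $\mathcal{F} \subseteq \modr$ (closed under isomorphic copies) is the torsion-free class of some torsion theory precisely when it is closed under submodules, extensions, and direct products. Two of these three conditions are already established for both $\sB$ and $\sR$: Lemma \ref{hered} supplies closure under submodules, and Lemma \ref{sRickart_class_extensions} supplies closure under extensions. Hence the only outstanding requirement in the characterization is closure under direct products, which is exactly the condition appearing in the theorem.

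With this reduction in place the equivalence is nearly immediate. For the forward direction, if $\sB$ (resp. $\sR$) is a torsion-free class, then it is closed under direct products, since every torsion-free class has this property by definition. For the converse, if the class is closed under direct products, then together with Lemmas \ref{hered} and \ref{sRickart_class_extensions} it satisfies all three closure conditions, so the cited characterization furnishes a torsion theory in which it is the torsion-free class. I do not anticipate any genuine obstacle here; the only point requiring care is to invoke the correct (not necessarily hereditary) version of the characterization, since the resulting s.Baer torsion theory need not be hereditary.

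For the ``in particular'' clause, I would first note that each listed hypothesis forces $\sB = \sR$. If $R$ is orthogonally finite this is Theorem \ref{orthog_finite}; if $R_R$ has the SSIP it follows from Theorem \ref{ssip}, since s.Rickart together with the SSIP gives s.Baer while s.Baer always gives s.Rickart. The indecomposable case is subsumed by the SSIP case, because an indecomposable $R_R$ has only the trivial direct summands $0$ and $R_R$, so any intersection of direct summands is again a direct summand and $R_R$ has the SSIP vacuously. Finally, under either hypothesis Theorem \ref{dsums} yields the implications $(i)\Rightarrow(ii)\Rightarrow(iii)$, so an arbitrary direct product of s.Baer modules is again s.Baer; that is, the common class $\sB = \sR$ is closed under direct products. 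Applying the first part of the theorem then shows this class is a torsion-free class, which completes the argument.
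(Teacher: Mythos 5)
Your proposal is correct and follows essentially the same route as the paper: invoke the Stenstr\"{o}m characterization of torsion-free classes, supply closure under submodules and extensions via Lemmas \ref{hered} and \ref{sRickart_class_extensions} so that closure under direct products is the only remaining condition, and then obtain the ``in particular'' clause from Theorems \ref{ssip}, \ref{orthog_finite}, and \ref{dsums}. Your added remarks (that the non-hereditary version of the characterization is the one needed, and that indecomposability of $R_R$ gives the SSIP vacuously) are accurate elaborations of points the paper leaves implicit.
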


\begin{proof}
Assume $\sB \ (\sR )$ is closed under direct products. From Lemma \ref{hered} and \ref{sRickart_class_extensions} and \cite[p. 140]{St}, $\sB \ ( \sR )$ is torsion-free. The converse follows from \cite[p.~140]{St}. Now suppose that $R$ is orthogonally finite or $R_R$ has the SSIP. By Theorems \ref{ssip} and \ref{orthog_finite}, $\sB = \sR$. From Theorem \ref{dsums}, $\sB$ is closed under direct products.
\end{proof}

Looking back to Corollary \ref{baercor}, we can gain a bit of perspective on $\sR$. In general, this class is not closed under arbitrary products thus preventing it from being a torsion-free class (see \cite[Theorem 2.9]{LC}). Observe that if $\sR$ is closed under arbitrary direct products, then $\sR$ would be a torsion-free class and many of the results in this section would hold true for it. Alternatively, we could try and view $\sR$ as a torsion class since it is closed under direct sums. Unfortunately, undifferentiated from the s.Baer modules, the s.Rickart modules are not closed under homomorphic images. Recalling Example \ref{counterexample}, we see that the class of s.Baer modules is strictly smaller than the class of s.Rickart modules. 

Setting aside $\sR$, let us concentrate further on $\sB$ and introduce some terminology and notation. We will denote the torsion theory associated with the class of s.Baer modules (when it exists) as $(\mathcal{T}_{\beta}, \mathcal{F}_\beta)$ where $\storfree  = \sB$ and $\beta$ is its associated idempotent radical. \textit{Henceforth when we speak of the s.Baer torsion theory $(\stor, \storfree)$, we are implicitly assuming the class of s.Baer modules is closed under direct products} (see Theorem \ref{sbaer_is_torsion_free}). Also recall from \cite[p.~141]{St} that a \textit{hereditary} torsion-theory $(\mathcal{T}, \mathcal{F})$ is one in which $\mathcal{T}$ is closed under submodules (equivalently, $\mathcal{F}$ is closed under injective hulls); and $(\tor, \torf)$ is \textit{stable} if $\tor$ is closed under injective hulls. Example \ref{last_ex}(iii) shows that, in general, $\stor$ is not hereditary. 

A \textit{(left) right duo} ring $R$ is a ring in which every (left) right ideal of $R$ is a two sided ideal. With a routine argument, one can show that in a right duo ring all idempotents are central. Recall that, in general, the s.Baer property does not pass to essential extensions (Example \ref{last_ex} (iii)).


\begin{lem} \label{duo_hered_torsion}
Let $S_R \ess M_R$ such that $S$ is a s.Baer module. If any of the following conditions are satisfied, then $M$ is s.Rickart.
\begin{enumerate}[(i)]
\item $R_R$ is $\mathcal{G}$-extending.
\item $\mathcal{B} (R) = \semicentral$ (e.g., $R$ is semiprime) and $r_R(m) \ideal R$ for each $m \in M$.
\item $R$ is a right duo ring.
\end{enumerate}
\end{lem}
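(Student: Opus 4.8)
The plan is to prove all three cases by a unified strategy: fix $0 \neq m \in M$ and show $r_R(m) = eR$ for some idempotent $e$. The key object is the pullback $m^{-1}S = \{r \in R \mid mr \in S\}$, which is essential in $R_R$ by the Sandomierski result quoted in the introduction, since $S \ess M$. Because $S$ is s.Baer (hence s.Rickart, hence nonsingular), I expect to compute $r_R(m)$ by relating it to annihilators of elements of $S$, where the idempotent-generation is guaranteed. I would first establish the general setup and then handle each hypothesis separately, since each provides a different mechanism for producing the required idempotent.

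For case (i), $R_R$ is $\mathcal{G}$-extending, so there exists $e = e^2 \in R$ with $r_R(m) \cap eR \ess r_R(m)$ and $r_R(m) \cap eR \ess eR$. This is exactly the situation handled in the proof of Theorem \ref{extend}(i): forming $L = e^{-1}(r_R(m) \cap eR)$, one gets $L \ess R_R$, so $meL = 0$ forces $me \in Z(M)$. The main thing I must verify here is that $Z(M) = 0$. This follows because $S \ess M$ is nonsingular (being s.Rickart) and nonsingularity passes up essential extensions: any $me$ with $me \cdot L = 0$ for some essential $L$ would meet $S$ essentially, and $Z(S) = 0$ would force $me = 0$. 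Once $me = 0$, we get $e \in r_R(m)$ and the essentiality upgrades to equality $r_R(m) = eR$.

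For case (iii), $R$ right duo, every idempotent is central, so I would use that $S$ s.Rickart gives idempotent-generated annihilators in $S$. The idea is to pick $s \in S$ related to $m$ via the essential submodule $m^{-1}S$: for a suitable $r \in m^{-1}S$ the element $mr \in S$ has $r_R(mr) = eR$ with $e$ central, and centrality lets me transfer the annihilator back to $m$ using Lemma \ref{rightlemma} (which gives $aR \cap r_R(X) = a \cdot r_R(Xa)$). For case (ii), where $\mathcal{B}(R) = \semicentral$ and each $r_R(m) \ideal R$, the two-sidedness of $r_R(m)$ combined with the hypothesis that semicentral idempotents are central should let me realize $r_R(m)$ as an idempotent-generated ideal by the same annihilator-transfer, again invoking nonsingularity of $M$ to eliminate the essential-but-not-summand possibility.

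The hard part will be case (ii) and case (iii), where (unlike case (i)) there is no $\mathcal{G}$-extending hypothesis to directly produce the idempotent; instead the idempotent must be manufactured from the s.Rickart structure of $S$ and pushed out to $m \in M \setminus S$ through the essential pullback, and I must carefully check that the centrality/two-sidedness assumptions make this transfer legitimate and that nonsingularity of $M$ (inherited from $S$) closes the gap between "$r_R(m)$ essential in a summand" and "$r_R(m)$ equals a summand." I expect the cleanest route is to prove $Z(M) = 0$ once at the outset from $S \ess M$ nonsingular, and then in each case reduce to showing $r_R(m)$ contains an idempotent generating it, with the nonsingularity guaranteeing no proper essential annihilators arise.
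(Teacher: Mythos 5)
Your handling of (i) is correct and coincides with the paper's: $Z(S)=0$ passes up the essential extension to give $Z(M)=0$, and then the argument of Theorem \ref{extend}(i) applies verbatim. Establishing $Z(M)=0$ once at the outset is also exactly the right move for all three cases.

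The gap is in the mechanism you propose for (ii) and (iii). Applying the s.Rickart property of $S$ to a \emph{single} element $mr$ with $r\in L:=m^{-1}S$ gives $r_R(mr)=eR$, and Lemma \ref{rightlemma} then only tells you $rR\cap r_R(m)=r\cdot r_R(mr)=reR$; assembling these over varying $r$ writes $r_R(m)$ as an arbitrary intersection of direct summands, which need not be a summand absent SSIP, so this transfer does not close. The step that works --- and the reason the hypothesis is s.Baer rather than s.Rickart --- is to apply the s.Baer property to the whole subset $mL\subseteq S$ at once. Since $mL$ is a submodule of $M$, $r_R(mL)$ is a two-sided ideal, and a two-sided ideal of the form $cR$ forces $c\in\semicentral$; hypothesis (ii) (or the right duo hypothesis in (iii)) then makes $c$ central. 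Now $0=mLc=mcL$ with $L\ess R_R$ and $Z(M)=0$ gives $mc=0$, so $cR\subseteq r_R(m)$; the reverse inclusion is where the assumed two-sidedness of $r_R(m)$ is indispensable, since it yields $r_R(m)=r_R(mR)\subseteq r_R(mL)=cR$. Case (iii) then reduces to (ii) because in a right duo ring every idempotent is central and every $r_R(m)$ is an ideal. Your sketch omits both uses of two-sidedness (of $r_R(mL)$, to get semicentrality of $c$, and of $r_R(m)$, to get the reverse inclusion), and without them the argument for (ii) and (iii) does not go through.
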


\begin{proof}
$(i)$ Since $Z(S) = 0$, $Z(M) = 0$. From Theorem \ref{extend} (i), $M$ is s.Rickart.

$(ii)$ Let $m \in M - S$ and  let $L=m^{-1}S= \{ r \in R \ | \ mr \in S \}$. Since $mL_R \leq M_R$ and $mL \subseteq S$, $r_R(mL) = cR$ where $c \in \semicentral = \mathcal{B} (R)$. Then $0 = mLc = mcL$. Since $L_R \ess R_R$ and $Z(M)=0$, $mc = 0$. So $cR \subseteq r_R(m) = r_R(mR) \subseteq r_R(mL) = cR$. Therefore $r_R (m) = cR$, hence $M$ is s.Rickart.

$(iii)$ If $R$ is right duo then all idempotents are central and $r_R(m) \ideal R$ for all $m \in M$. So this part follows from $(ii)$.
\end{proof}


\begin{prop} \label{commutative_hered_torsion}
Suppose $R_R$ has the SSIP or is orthogonally finite. Then if $R$ is right duo or $R_R$ is $\mathcal{G}$-extending (e.g., $R$ is a commutative Noetherian ring or $R_R$ is nonsingular extending), $(\mathcal{T}_\beta, \mathcal{F}_\beta)$ is a hereditary torsion theory.
\end{prop}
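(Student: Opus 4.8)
The plan is to verify that the torsion-free class $\storfree = \sB$ is closed under injective hulls; by the characterization recalled from \cite[p.~141]{St}, this is precisely equivalent to the torsion theory $(\stor, \storfree)$ being hereditary. First I would observe that the hypothesis ``$R_R$ has the SSIP or is orthogonally finite'' guarantees, via Theorem \ref{dsums} together with Theorem \ref{sbaer_is_torsion_free}, that $\sB$ is closed under direct products, so that the s.Baer torsion theory genuinely exists and $\storfree = \sB$. This is the standing assumption needed even to speak of $(\stor, \storfree)$, and it is supplied by the first disjunction in the hypothesis.

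The core of the argument is a two-step upgrade. Let $M \in \sB$ be an arbitrary s.Baer module, so that $M \ess E(M)$ with $M$ s.Baer. First I would apply Lemma \ref{duo_hered_torsion}: when $R_R$ is $\mathcal{G}$-extending we invoke part $(i)$, and when $R$ is right duo we invoke part $(iii)$, to conclude that $E(M)$ is s.Rickart. Second, I would upgrade s.Rickart to s.Baer: if $R_R$ has the SSIP this is the implication $(i)\Rightarrow(ii)$ of Theorem \ref{ssip}, while if $R$ is orthogonally finite it is Theorem \ref{orthog_finite}. Either way $E(M)$ is s.Baer. Having shown $E(M) \in \sB$ for every $M \in \sB$, the torsion-free class $\storfree$ is closed under injective hulls, whence $(\stor, \storfree)$ is hereditary.

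I do not anticipate a genuine obstacle here; the content lies in correctly assembling Lemma \ref{duo_hered_torsion} with the s.Rickart-to-s.Baer upgrade results. The one point requiring care is matching the two disjunctions to their roles: ``$R$ right duo or $R_R$ $\mathcal{G}$-extending'' feeds the hypotheses of Lemma \ref{duo_hered_torsion} (yielding s.Rickartness of $E(M)$), while ``$R_R$ has the SSIP or $R$ is orthogonally finite'' feeds Theorems \ref{ssip} and \ref{orthog_finite} (yielding s.Baerness). It remains to confirm that the parenthetical examples satisfy both disjunctions at once: a commutative Noetherian ring is right duo (commutativity) and orthogonally finite (the ACC forbids infinite orthogonal idempotent sets), and a nonsingular extending $R_R$ is $\mathcal{G}$-extending (extending implies $\mathcal{G}$-extending) while being a Baer ring by \cite[Theorem 2.1]{CK} and hence possessing the SSIP. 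Thus each example meets one condition from each disjunction, and the proposition applies.
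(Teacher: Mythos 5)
Your proof is correct and follows exactly the paper's argument: take $M \in \sB$, use Lemma \ref{duo_hered_torsion} to get $E(M) \in \sR$, then upgrade to $E(M) \in \sB$ via Theorem \ref{ssip} or Theorem \ref{orthog_finite}, so $\storfree$ is closed under injective hulls. Your additional checks (that the SSIP/orthogonally finite hypothesis also guarantees the torsion theory exists, and that the parenthetical examples satisfy both disjunctions) are accurate and only make explicit what the paper leaves implicit.
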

\begin{proof}
Let $M \in \sB$ and consider $E(M)$. By Lemma \ref{duo_hered_torsion}, $E(M) \in \sR$. Theorem \ref{ssip} or Theorem \ref{orthog_finite} imply $E(M) \in \sB$. Thus, $\mathcal{F}_\beta$ is closed under injective hulls, so $(\mathcal{T}_\beta, \mathcal{F}_\beta)$ is a hereditary torsion theory.
\end{proof}


\begin{prop} \label{torsion_class_extensions}
The s.Baer torsion class $\mathcal{T}_\beta$ is closed under essential extensions. Hence, $(\stor, \storfree)$ is a stable torsion theory.
\end{prop}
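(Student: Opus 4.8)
The plan is to exploit the standard characterization of the torsion class in terms of its torsion-free companion, which here is exactly $\sB$. By definition of the torsion theory $(\stor, \storfree)$ with $\storfree = \sB$, a module $T$ lies in $\stor$ precisely when $\mathrm{Hom}_R(T, L) = 0$ for every s.Baer module $L$. Thus the assertion that $\stor$ is closed under essential extensions reduces to a $\mathrm{Hom}$-vanishing statement, and Proposition \ref{essential_hom} is tailored for exactly this situation.

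First I would fix $T \in \stor$ together with an essential extension $T \ess N$, with the goal of showing $N \in \stor$. To that end, take an arbitrary s.Baer module $L$. Since $T \in \stor$ and $L \in \storfree$, we have $\mathrm{Hom}_R(T, L) = 0$. Now I would apply Proposition \ref{essential_hom} to the essential pair $T \ess N$ with $K = L$: the hypotheses ``$K$ is s.Baer'' and ``$\mathrm{Hom}_R(T, K) = 0$'' are both in force, so its conclusion yields $\mathrm{Hom}_R(N, L) = 0$. Because $L$ was an arbitrary s.Baer module, this shows $N \in \stor$, establishing that $\stor$ is closed under essential extensions.

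For the final clause, I would invoke the fact that every module $M$ embeds essentially in its injective hull, $M \ess E(M)$. Consequently, closure under essential extensions immediately gives that $M \in \stor$ implies $E(M) \in \stor$; that is, $\stor$ is closed under injective hulls, which is precisely the definition of a \emph{stable} torsion theory.

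I do not anticipate a genuine obstacle here, since the entire analytic content is already carried by Proposition \ref{essential_hom}. The only care required is to match its hypotheses correctly---identifying its ``$M$'' with our $N$, its ``$T$'' with our $T$, and its ``$K$'' with the s.Baer test module $L$---and to keep in mind that it is the choice $\storfree = \sB$ that translates membership in $\stor$ into the $\mathrm{Hom}$-vanishing condition being exploited.
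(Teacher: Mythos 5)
Your proposal is correct and follows essentially the same route as the paper: both arguments reduce membership in $\stor$ to the vanishing of $\mathrm{Hom}_R(-,L)$ for s.Baer test modules $L$ and then invoke Proposition \ref{essential_hom} to pass from $T$ to its essential extension. The only cosmetic difference is that the paper phrases the argument directly for the injective hull $E(T)$, whereas you treat an arbitrary essential extension first and then specialize; the underlying mechanism is identical.
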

\begin{proof}
Let $T \in \stor$ and $E(T) = M$. By Proposition \ref{essential_hom}, $Hom_R(M,F) = 0$ for any $F \in \storfree$. So $M \in \stor$. Thus, $(\stor, \storfree)$ is stable.
\end{proof}

Recall from \cite{BMR}, a module $M$ is \textit{FI-extending} if every fully-invariant submodule is essential in a direct summand of $M$.

\begin{cor} \label{FIsplitting} Let $( \mathcal{T} , \mathcal{F} )$ be a torsion theory with a radical $\rho$ for which $\mathcal{T}$ is closed under essential extensions. If $M \in \modr$ is FI-extending, then $\rho (M)$ splits-off. In particular, $\B (M)$ splits-off in any FI-extending module.
\end{cor}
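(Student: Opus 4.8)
The plan is to play the maximality of the torsion submodule $\rho(M)$ against the hypothesis that $\mathcal{T}$ is closed under essential extensions. The first step is to observe that $\rho(M)$ is a \emph{fully invariant} submodule of $M$: since $\mathcal{T}$ is a torsion class it is closed under homomorphic images, so for every $\varphi \in End_R(M)$ the image $\varphi(\rho(M))$ is a torsion submodule of $M$, whence $\varphi(\rho(M)) \subseteq \rho(M)$ by the maximality of the torsion submodule. This is exactly what is needed to bring the FI-extending hypothesis to bear, since that hypothesis controls precisely the fully invariant submodules.

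Because $M$ is FI-extending and $\rho(M) \ideal M$, there is a direct summand $D \dsum M$ with $\rho(M) \ess D$. The crux is to show $D = \rho(M)$. On one hand, $\rho(M) \ess D$ already gives $\rho(M) \subseteq D$. On the other hand, $\rho(M) \in \mathcal{T}$ and $\rho(M) \ess D$, so closure of $\mathcal{T}$ under essential extensions forces $D \in \mathcal{T}$; the maximality of $\rho(M)$ then yields $D \subseteq \rho(M)$. Combining the two inclusions gives $D = \rho(M)$, and therefore $\rho(M) = D \dsum M$, i.e.\ $\rho(M)$ splits off.

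For the final assertion, I would apply this general statement to the s.Baer torsion theory $(\stor, \storfree)$ with idempotent radical $\B$. The only property of the torsion class used in the argument above is closure under essential extensions, and this is precisely what Proposition \ref{torsion_class_extensions} supplies for $\stor$; hence $\B(M)$ splits off in any FI-extending $M$.

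The only point requiring care is the fully-invariance of $\rho(M)$ in the opening step; once that is secured, the identification $D = \rho(M)$ via the maximality/essential-extension squeeze is automatic, so I do not expect a genuine obstacle.
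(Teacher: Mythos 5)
Your proof is correct and follows exactly the paper's argument: $\rho(M)$ is fully invariant, FI-extending supplies a summand $D$ with $\rho(M)\ess D$, closure of $\mathcal{T}$ under essential extensions plus maximality of $\rho(M)$ forces $D=\rho(M)$, and Proposition \ref{torsion_class_extensions} handles the s.Baer case. You merely spell out the fully-invariance and the $D=\rho(M)$ squeeze in more detail than the paper does.
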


\begin{proof}
For any $M \in \modr$, $\rho (M)$ is a fully-invariant submodule of $M$. Since $M$ is FI-extending and $\mathcal{T}$ is closed under essential extensions, $\rho (M) = D$ where $ D \dsum M$. Lastly, observe $( \stor, \storfree )$ is a torsion theory which satisfies the hypotheses (see Proposition \ref{essential_hom}).
\end{proof}

The Goldie torsion theory consists of the stable hereditary torsion class $\mathfrak{G} = \{ N \in \mathcal{M}od_R \ | \ Z(N)\leq^{ess} N \}$, the hereditary torsion-free class $\mathfrak{F} = \{ N \in \mathcal{M}od_R \ | \ Z(N)=0 \}$, and its associated left exact radical $Z_2$ \cite[pp. 148,158]{St}. Since every s.Baer module is nonsingular, $\mathcal{F}_\beta$ is contained in $\mathfrak{F}$ and $\mathfrak{G}$ is contained in $\mathcal{T}_\beta$. For an alternate proof of Proposition \ref{torsion_class_extensions} and more on torsion theories associated with the Goldie Torsion Theory, see \cite{T}.

Up to this point, most of our previous results concern $\storfree$. The remainder of this paper focuses on $\stor$. Here we give conditions for when a module is a s.Baer torsion module.

\begin{prop} \label{cyclic_torsion}
Let $mR$ be a nonzero cyclic $R$-module where $r_R(m)\nsubseteq eR$ for any nontrivial $e=e^2 \in R$. Then $mR \in \stor$.
\end{prop}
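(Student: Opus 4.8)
The plan is to work directly from the standard description of the torsion class of a torsion theory in terms of its torsion-free class. Since the s.Baer torsion theory has $\storfree = \sB$, a module $T$ lies in $\stor$ precisely when $Hom_R(T,F)=0$ for every s.Baer module $F$. Thus the whole statement reduces to proving that every $R$-homomorphism from the cyclic module $mR$ into an arbitrary s.Baer module is zero, and the hypothesis on $r_R(m)$ should be exactly what obstructs the existence of a nonzero such map.

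Concretely, I would fix an arbitrary $F \in \sB$ and an arbitrary $h \in Hom_R(mR,F)$, and set $n := h(m)$. Because $mR$ is cyclic, $h$ is completely determined by $n$, and a one-line computation gives the crucial inclusion: if $mr=0$ then $nr = h(mr) = 0$, so $r_R(m)\subseteq r_R(n)$. Now suppose, toward a contradiction, that $h \neq 0$, equivalently $n \neq 0$. Since $F$ is s.Baer it is in particular s.Rickart, so $r_R(n) = eR$ for some $e=e^2\in R$; and $n = n\cdot 1 \neq 0$ forces $1\notin r_R(n)$, hence $e\neq 1$. Combining this with the inclusion yields $r_R(m)\subseteq eR$ with $e$ a proper idempotent, which is exactly the configuration the hypothesis forbids. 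This contradiction shows $h=0$, and as $F$ and $h$ were arbitrary we conclude $Hom_R(mR,F)=0$ for all $F\in\sB$, i.e.\ $mR\in\stor$.

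The conceptual content is modest; the work lies mostly in setting things up correctly — identifying $\stor$ as the left-orthogonal class $\{\,T : Hom_R(T,\sB)=0\,\}$ and passing from s.Baer to s.Rickart on the \emph{target} $F$ (rather than on $mR$ itself). The one point that genuinely needs care is the bookkeeping with the trivial idempotents. Having $n\neq 0$ immediately rules out $e=1$; the only remaining value to dispose of is $e=0$, which would give $r_R(n)=0$ and hence $r_R(m)=0$. This is precisely why the hypothesis must be read as asserting that $r_R(m)$ is contained in no proper summand $eR$ of $R_R$ whatsoever — in particular $r_R(m)\neq 0$ — so that the case $e=0$ is excluded along with the nontrivial cases. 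Once this reading is made explicit, I anticipate no further obstacle.
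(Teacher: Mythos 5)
Your proof is correct and is essentially the paper's own argument: the paper phrases it through $mR \cong R/r_R(m)$ and the correspondence theorem, concluding that $\ker h$ corresponds to $K = r_R(1+K) = eR \supseteq r_R(m)$, which is exactly your direct computation $r_R(m) \subseteq r_R(h(m)) = eR$ with $e \neq 1$. Your closing remark about the trivial idempotent $e=0$ (the case $r_R(m)=0$, relevant when $R$ has no nontrivial idempotents) flags a genuine edge case that the paper's proof passes over silently, and the reading of the hypothesis you adopt is the one the paper implicitly uses in the corollary that follows, where the condition $r_R(m)\neq 0$ is added for indecomposable $R$.
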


\begin{proof}
Suppose, to the contrary, $Hom(mR,F) \neq 0$ for $F \in \storfree$ and let $h:mR \rightarrow F$ be a nonzero $R$-homomorphism. We know $mR \iso R/r_R(m)$ and we denote $r_R(m) = H$. Now let $ker(h) \iso K/H \leq R/H$. By the second isomorphism theorem, $R/K \iso \frac{R/H}{K/H} \iso h(mR) \in \storfree$. Then $r_R(1+K) = K = eR$ for some $e=e^2 \in R$. By correspondence, $H \subseteq K$, i.e., $r_R(m) \subseteq eR$ contrary to the hypothesis.
\end{proof}


Note that in Example \ref{last_ex}(iii), $\beta (R_R) = \left( \begin{smallmatrix} 1 & 0 \\ 0 & 0 \\ \end{smallmatrix} \right) R$ but $r_R( \left( \begin{smallmatrix} 1 & 0 \\ 0 & 0 \\ \end{smallmatrix} \right) ) = \left( \begin{smallmatrix} 0 & 0 \\ 0 & 1 \\ \end{smallmatrix} \right) R$. Thus the converse of Proposition \ref{cyclic_torsion} is false. 


\begin{cor} 
Let $M \in \modr$ and $R_R$ be indecomposable.
\begin{enumerate}[(i)]
\item $\sum_{r_R(m) \neq 0} mR \leq \B (M)$, and if $\sum_{r_R(m) \neq 0} mR = 0$ then $\B (M) = 0$.
\item If $(\stor, \storfree)$ is hereditary (e.g., $R$ is commutative) then $\sum_{r_R(m) \neq 0} mR \ess \B (M)$.
\end{enumerate}
\end{cor}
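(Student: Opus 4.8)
The plan is to exploit the characterization of s.Baer modules over an indecomposable ring furnished by Proposition \ref{indecomposable}: since $R_R$ is indecomposable, $0$ and $1$ are the only idempotents of $R$, and a module $N\in\modr$ lies in $\storfree=\sB$ precisely when $r_R(n)=0$ for every $0\neq n\in N$. I also record two standing facts: indecomposability forces $R_R$ to have the SSIP, so by Theorem \ref{sbaer_is_torsion_free} the torsion theory $(\stor,\storfree)$ genuinely exists; and $\B(M)$ is by definition the largest torsion submodule of $M$, so $\B(M)\in\stor$. Throughout write $T=\sum_{r_R(m)\neq 0}mR$.

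For part $(i)$ I would first show each defining summand is torsion. Fix $0\neq m\in M$ with $r_R(m)\neq 0$. Because $R$ has no nontrivial idempotents, the hypothesis of Proposition \ref{cyclic_torsion} holds vacuously, so $mR\in\stor$; concretely, any nonzero map $mR\iso R/r_R(m)\to F$ with $F\in\storfree$ would have image $R/K$ (with $r_R(m)\subseteq K\subsetneq R$) that is s.Baer, forcing $r_R(1+K)=K=0$ by Proposition \ref{indecomposable} and contradicting $0\neq r_R(m)\subseteq K$. Hence every such $mR\leq\B(M)$, and as $\B(M)$ is a submodule it contains their sum, giving $T\leq\B(M)$. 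For the second assertion, if $T=0$ then every $m$ with $r_R(m)\neq 0$ is itself zero, i.e. $r_R(m)=0$ for all $0\neq m\in M$; Proposition \ref{indecomposable} then gives $M\in\storfree$, and a torsion-free module has zero torsion submodule, so $\B(M)=0$.

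For part $(ii)$, assuming $(\stor,\storfree)$ is hereditary, I must upgrade the containment $T\leq\B(M)$ to $T\ess\B(M)$. I would argue by contradiction: suppose some $0\neq N\leq\B(M)$ meets $T$ trivially, $N\cap T=0$. Heredity means $\stor$ is closed under submodules, so $N\in\stor$. Now the dichotomy of Proposition \ref{indecomposable} does the work: if some $0\neq n\in N$ had $r_R(n)\neq 0$, then $nR$ would be one of the defining summands of $T$, whence $0\neq nR\subseteq N\cap T=0$, impossible; therefore $r_R(n)=0$ for all $0\neq n\in N$, and so $N\in\storfree$. But $N\in\stor\cap\storfree$ forces $N=0$ (the identity map $N\to N$ is a morphism from a torsion module to a torsion-free one), contradicting $N\neq 0$. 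Hence $T\ess\B(M)$. Finally, when $R$ is commutative it is right duo and, being indecomposable, has the SSIP, so Proposition \ref{commutative_hered_torsion} supplies the required heredity and justifies the parenthetical example.

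The main obstacle is the essentiality step in $(ii)$: the containment $T\leq\B(M)$ is routine, but essentiality demands extracting a genuine contradiction from a hypothetical complement $N$. The crux is recognizing that heredity pins $N$ inside $\stor$ while the indecomposable-ring characterization simultaneously pins it inside $\storfree$, after which the torsion/torsion-free incompatibility collapses $N$ to $0$. The only point needing care is confirming that any nonzero cyclic piece of $N$ with nonzero annihilator already sits in $T$, which is immediate from the definition of $T$.
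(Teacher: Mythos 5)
Your proposal is correct and follows essentially the same route as the paper: part (i) is exactly the paper's appeal to Propositions \ref{cyclic_torsion} and \ref{indecomposable}, and for part (ii) the paper likewise takes a hypothetical complement ($yR$ with $0\neq y\in\B(M)$ rather than your general $N$), uses Proposition \ref{indecomposable} to see it is s.Baer, and derives a contradiction with heredity. Your write-up merely spells out the details (including why Proposition \ref{cyclic_torsion} applies when $r_R(m)\neq 0$) more explicitly than the paper does.
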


\begin{proof}
$(i)$ This is immediate from Propositions \ref{cyclic_torsion} and \ref{indecomposable}.

$(ii)$ Assume there is a nonzero $y \in R$ such that $yR \cap \sum_{r_R(m) \neq 0} mR = 0$. Then for all $0 \neq r \in R$, $yr \neq 0$ (otherwise $y \in \sum_{r_R(m) \neq 0} mR$) and $r_R(yr) = 0$. Contrary to our hypothesis, $yR$ is s.Baer by Proposition \ref{indecomposable}. Thus, $\sum_{r_R(m) \neq 0} mR \ess \B (M)$.
\end{proof}


\begin{lem} \label{prerad}
\cite[p. 16]{BKN} Let $\rho$ be a (pre-)radical. Then $M \rho (R) \subseteq \rho (M)$ for any $M \in \mathcal{M}od_R$. In particular, if $M$ is projective, then $M \rho (R) = \rho (M)$.
\end{lem}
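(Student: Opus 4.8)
The plan is to prove the inclusion for an arbitrary module first, then bootstrap to the equality for projectives via a free cover and a splitting. For the inclusion $M\rho(R)\subseteq\rho(M)$ the only input is that a preradical is a subfunctor of the identity. Fixing $m\in M$, right multiplication $\lambda_m\colon R_R\to M$, $\lambda_m(r)=mr$, is a homomorphism of right $R$-modules, so functoriality of $\rho$ gives $\lambda_m(\rho(R))\subseteq\rho(M)$, that is $m\rho(R)\subseteq\rho(M)$. As $\rho(M)$ is a submodule and $M\rho(R)=\sum_{m\in M}m\rho(R)$, summing over $m$ yields $M\rho(R)\subseteq\rho(M)$. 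This proves the first assertion and furnishes one inclusion needed in the projective case.

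Next I would establish the equality on free modules: for $F=\bigoplus_I R$ I claim $\rho(F)=F\rho(R)$. One inclusion is the special case $F\rho(R)\subseteq\rho(F)$ of what was just proved. For the reverse, let $\pi_j\colon F\to R$ and $\iota_j\colon R\to F$ be the coordinate projections and injections. If $x\in\rho(F)$, then $\pi_j(x)\in\rho(R)$ for each $j$ by functoriality, $x$ has finite support, and $\iota_j(r)=e_jr$ where $e_j\in F$ is the $j$-th standard generator; hence $x=\sum_j e_j\,\pi_j(x)\in F\rho(R)$. This gives $\rho(F)\subseteq F\rho(R)$, and so $\rho(F)=F\rho(R)$.

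Finally I would descend to a projective module $M$. Choose a free module $F$ and $R$-homomorphisms $i\colon M\to F$, $p\colon F\to M$ with $pi=\mathrm{id}_M$. Functoriality of $p$ and $i$ gives $p(\rho(F))\subseteq\rho(M)$ and $\rho(M)=(pi)(\rho(M))\subseteq p(\rho(F))$, so $\rho(M)=p(\rho(F))$. Substituting $\rho(F)=F\rho(R)$ and using that $p$ is a surjective right $R$-homomorphism, $p(F\rho(R))=p(F)\rho(R)=M\rho(R)$; therefore $\rho(M)=M\rho(R)$.

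I expect the delicate point to be the reverse inclusion $\rho(M)\subseteq M\rho(R)$ in the projective case, since unlike the first inclusion it is not visible element-by-element and genuinely uses projectivity. The mechanism that makes it go through is the free-module identity $\rho(F)=F\rho(R)$ together with the splitting $pi=\mathrm{id}_M$, which transports the identity from $F$ down to its summand $M$; the only structural facts being exploited are that the right multiplications $\lambda_m$ and the projections and injections of a direct sum are all module homomorphisms, to which the functoriality of $\rho$ applies.
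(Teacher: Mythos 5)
Your proof is correct and is the standard argument: the paper does not prove this lemma but cites it from [BKN, p.~16], and your route (right multiplications give $M\rho(R)\subseteq\rho(M)$, coordinate projections give $\rho(F)=F\rho(R)$ for $F$ free, then a splitting $pi=\mathrm{id}_M$ transports the equality to a projective summand) is exactly the usual proof of that cited fact. No gaps.
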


\begin{defn} Let $M \in \modr$. The s.Baer (s.Rickart) core of M, $\sbcore (M)$ $( \srcore (M) )$, is the nonempty subset $\{ s \in M \mid sR \in \sB \ ( \sR )  \}$.
\end{defn}

Note $0 \in \sbcore (M)$ ($\srcore (M)$) and, by Lemma \ref{hered}, $\sbcore (M)$ ($\srcore (M)$) is closed under scalar multiplication. If $e \in \semicentral$ and $M \in \modr$, then $1-e \in \rsemicentral$, $eR(1-e) \ideal R$, and $M(1-e) \leq M$ (see \cite[p. 5]{BPR}).

In the following results, we investigate properties of modules with a nonzero s.Baer radical and, in several instances, we compute the s.Baer radical of a module.


\begin{thm} \label{R_nonzero_radical}
Let $M \in \modr$ and $\B (M) \neq M$. Then:
\begin{enumerate}[(i)]
\item There exists $c \in \semicentral$ such that $r_R( \sbcore (M) ) \cap r_R( \sbcore (R) ) = cR \neq R$, $\B (R) \leq cR$, and $(1-c)R \in \storfree$. If $\B (M) = M \B (R)$ (e.g., $M$ is projective), $\B (M) \leq McR$.

\item If $\B (R) \ess cR$, then $\B (R) = cR$.

\item Suppose $\B (R) \ness cR$ and $K$ is any relative complement of $\B (R)$ in $cR$. Then $K \leq cR(1-c)$ and there exists a projective submodule $P$ of $K$ such that $\B (R) \oplus P \ess cR$. 
\end{enumerate}
\end{thm}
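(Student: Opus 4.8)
The plan is to extract a single idempotent $c$ from the two annihilator ideals, read off each clause of (i) from it, and then treat (ii) and (iii) as the two complementary positions of $\B(R)$ inside $cR$. Throughout I use that a submodule of an s.Baer module is s.Baer (Lemma \ref{hered}), so that $\sbcore(M)$ and $\sbcore(R)$ are closed under right multiplication by $R$; hence $A := r_R(\sbcore(M))$ and $B := r_R(\sbcore(R))$ are two-sided ideals. To see these are generated by idempotents, I would form, over a core, the s.Baer module $\bigoplus_s sR$ (a direct sum of s.Baer modules is s.Baer, since under the standing product-closure hypothesis it embeds in the corresponding product) and compute the right annihilator of the family of coordinate elements; this recovers $\bigcap_s r_R(s)$, so $A$, $B$, and—using $\bigoplus_{\sbcore(M)} sR \oplus \bigoplus_{\sbcore(R)} tR$—their intersection $A\cap B$ are each generated by an idempotent. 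Writing $A\cap B = cR$, a two-sided ideal generated by an idempotent forces $c\in\semicentral$. The inclusion $\B(R)\le cR$ is then torsion-theoretic: for $s$ in either core, $b\mapsto sb$ is an $R$-homomorphism from the torsion module $\B(R)$ to the s.Baer (torsion-free) module $sR$, hence zero, so $\sbcore(M)\B(R)=\sbcore(R)\B(R)=0$, i.e. $\B(R)\subseteq A\cap B$.

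To finish (i), for $(1-c)R\in\storfree$ I note $(1-c)R\iso R/(A\cap B)$ embeds in $R/A\oplus R/B$ via $r\mapsto(r+A,\,r+B)$; each factor $R/A$ (resp. $R/B$) embeds through $r\mapsto(sr)_s$ into the s.Baer product $\prod_s sR$, hence is s.Baer, so $R/A\oplus R/B$ is s.Baer (Proposition \ref{finite_dsum}) and $(1-c)R$ is s.Baer as a submodule (Lemma \ref{hered}). For $cR\neq R$, first $\B(R)\neq R$: were $\B(R)=R$, then $R$ is torsion and every s.Baer module is killed, forcing $\B(M)=M$; so the nonzero torsion-free quotient $R/\B(R)$ is s.Baer, whence by Theorem \ref{annihilator_ssip} $\B(R)=r_R(R/\B(R))$ is generated by a left semicentral idempotent and $R/\B(R)$ is isomorphic to a nonzero principal right ideal, exhibiting $\{0\}\neq\sbcore(R)$; thus $B\neq R$ and $cR\subseteq B\subsetneq R$. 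The last clause is immediate: if $\B(M)=M\B(R)$—which holds for projective $M$ by Lemma \ref{prerad}—then $\B(M)=M\B(R)\subseteq McR$ since $\B(R)\subseteq cR$.

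For (ii) I would invoke that $\stor$ is closed under essential extensions (Proposition \ref{torsion_class_extensions}, ultimately Proposition \ref{essential_hom}). As $\B(R)\in\stor$ and $\B(R)\ess cR$, this gives $cR\in\stor$; since $\B(R)$ is the largest torsion submodule of $R_R$, we get $cR\subseteq\B(R)$, and combined with $\B(R)\subseteq cR$ from (i) this yields $\B(R)=cR$.

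For (iii), a relative complement $K$ of $\B(R)$ in $cR$ satisfies $\B(R)\oplus K\ess cR$ by maximality. Since $K\cap\B(R)=0$, $K$ embeds in the torsion-free module $cR/\B(R)$, so $K\in\sB$ (Lemma \ref{hered}) and is in particular s.Rickart; Theorem \ref{essprojective} then supplies a projective $P\le K$ with $P\ess K$, whence $\B(R)\oplus P\ess\B(R)\oplus K\ess cR$. The containment $K\le cR(1-c)$ is the crux. Using $c\in\semicentral$ I have the Peirce decomposition $cR=cRc\oplus cR(1-c)$, with $cR(1-c)$ a (square-zero) ideal; the key structural fact I would establish is that the diagonal corner $cRc$ lies in the torsion radical $\B(R)$. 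Granting this, for $x\in K$ one has $xc\in cRc$ and $xc\in K$ (as $K$ is a right submodule), so $xc\in K\cap cRc\subseteq K\cap\B(R)=0$, forcing $x=x(1-c)\in cR(1-c)$. I expect proving $cRc\subseteq\B(R)$—i.e. pinning down exactly how the torsion radical sits relative to the idempotent $c$—to be the main obstacle; every other step is bookkeeping on Lemma \ref{hered}, Theorem \ref{annihilator_ssip}, Theorem \ref{essprojective}, Lemma \ref{prerad}, and the essential-extension stability of $\stor$.
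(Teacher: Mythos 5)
Your treatment of (i) and (ii) is correct and is essentially the paper's own argument: the cores are nonzero because stability of $\stor$ gives $\B(M)\ness M$ and $\B(R)\ness R$; the annihilators $r_R(\sbcore(M))$ and $r_R(\sbcore(R))$ are computed inside an s.Baer product and are two-sided ideals, hence generated by left semicentral idempotents; $\B(R)\subseteq cR$ follows from $sR\,\B(R)\subseteq\B(sR)=0$ (Lemma \ref{prerad}) for $s$ in either core; and (ii) is exactly the essential closure of $\stor$ from Proposition \ref{torsion_class_extensions}. Your added justifications of $(1-c)R\in\storfree$ and $cR\neq R$, which the paper leaves terse, are sound.

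The gap is in (iii), precisely where you flagged it, and your proposed repair cannot work: the containment $cRc\subseteq\B(R)$ is \emph{false} whenever (iii) is nontrivial. Indeed $c=c\cdot 1\cdot c\in cRc$, and $c\in\B(R)$ would force $cR\subseteq\B(R)\subseteq cR$, i.e.\ $\B(R)=cR$; but the hypothesis $\B(R)\ness cR$ forces $cR\neq 0$ and $\B(R)\neq cR$. So $xc$ cannot be pushed into $\B(R)$ this way. The correct step uses pieces you already have: you showed $K$ embeds in the torsion-free module $cR/\B(R)$, hence $K\in\sB$ by Lemma \ref{hered}, so every $k\in K$ has $kR$ s.Baer, i.e.\ $K\subseteq\sbcore(R)$. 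Since $c\in cR=r_R(\sbcore(M))\cap r_R(\sbcore(R))\subseteq r_R(\sbcore(R))$, we get $Kc\subseteq\sbcore(R)\,c=0$. Thus for $k\in K\leq cR$ we have $k=ck$ and $k=k(1-c)$, so $k=ck(1-c)\in cR(1-c)$. This is the paper's argument. The remainder of your (iii) --- $\B(R)\oplus K\ess cR$ by maximality of the relative complement, and the projective $P\ess K$ supplied by Theorem \ref{essprojective} --- is correct.
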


\begin{proof}
$(i)$ Since $\B (M) \neq M$, then $\B (R) \neq R$. For if $\B (R) = R$, then $M = M \B (R) \leq \B (M) \leq M$, by Lemma \ref{prerad}. From Proposition \ref{torsion_class_extensions}, $\B (M) \ness M$ and $\B (R) \ness R$. Hence, $\sbcore (M)$ and $\sbcore (R)$ are nonzero. Let $I$ be an indexing set for $\sbcore (M)$. Since we are assuming $\storfree$ exists, $\prod_I s_i R \in \sB$ where $s_i \in \sbcore (M)$. Define $g \in \prod_I s_iR$ by $f(i) = s_i$. Then $r_R ( \sbcore (M) ) = r_R(g) = eR \neq R$ where $e \in \semicentral$ since $\sbcore (M)$ is closed under scalar multiplication. By Lemma \ref{prerad}, $sR \B (R) \subseteq \B (sR) = 0$. Hence, $\B (R) \subseteq r_R (sR) \subseteq r_R (s)$ for all $s \in \sbcore (M)$. Then $\B (R) \subseteq \bigcap_{s \in \sbcore (M)} r_R (s) = r_R (\sbcore (M) ) = eR$.

A similar proof using $\sbcore (R)$ gives us $\B (R) \subseteq \bigcap_{s \in \sbcore (R)} r_R (s) = r_R( \sbcore (R) ) = fR$ for some $f \in \semicentral$. Take $c = fe \in \semicentral$. Then $\B (R) \subseteq cR$ and $(1-c)R \in \storfree$. By Lemma \ref{prerad}, $\B (M) \leq McR$.

$(ii)$ This part follows from Proposition \ref{torsion_class_extensions}.

$(iii)$ Clearly $K = cK$. Observe that $K \subseteq \sbcore (R)$. Hence, $0 = Kfe = Kc$. So $K = K(1-c)$. Thus, $K \leq cR(1-c)$. Theorem \ref{essprojective} ensures the existence of the desired $P$.
\end{proof}

\begin{cor} Let $M \in \modr$ such that $\B (M) = M \B (R)$ (e.g., $M$ is projective) and $\B (M) \neq M$. Taking $c$ as in Theorem \ref{R_nonzero_radical}, we have:
\begin{enumerate}[(i)]
\item $\B ( M(1-c) ) \leq McR(1-c) \leq J(M)$ where $J(M)$ is the Jacobson radical of $M$. Hence, if $J(M) = 0$ then $M(1-c) \in \storfree$.

\item If every semicentral idempotent is central (e.g., $R$ is semiprime), then $M = Mc \oplus M(1-c)$ where $Mc = \B (M)$ and $M(1-c) \in \storfree$.

\item If $M$ is projective and $R$ is semiprime then $\B (M)$ splits-off.


\end{enumerate}
\end{cor}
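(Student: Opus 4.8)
The plan is to obtain (i) as a containment computation with the preradical $\B$, derive (ii) from Theorem \ref{R_nonzero_radical} once $c$ is known to be central, and then read off (iii) as a special case. For (i), since $\B$ is a preradical and $M(1-c) \leq M$ (as $1-c \in \rsemicentral$), I would start from $\B(M(1-c)) \subseteq M(1-c) \cap \B(M)$ and combine it with $\B(M) \leq McR$ from Theorem \ref{R_nonzero_radical}(i); the computation then reduces to the identity $M(1-c) \cap McR = McR(1-c)$, whose nontrivial inclusion uses that each $x \in M(1-c)$ satisfies $x = x(1-c)$, so an $x = \sum m_i c r_i \in McR$ equals $\sum m_i c r_i (1-c)$. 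For the bound $McR(1-c) \leq J(M)$, the point is that $cR(1-c)$ is a square-zero (hence nil) ideal, because $(1-c)c = 0$ annihilates the product of any two of its elements; thus $cR(1-c) \subseteq J(R)$, and Lemma \ref{prerad} applied to the radical $\mathrm{rad} = J$ gives $McR(1-c) \subseteq MJ(R) \subseteq J(M)$. The final clause is then immediate, since $J(M) = 0$ forces $\B(M(1-c)) = 0$, i.e. $M(1-c) \in \storfree$.

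For (ii), the hypothesis that every semicentral idempotent is central makes $e$, $f$, and hence $c = fe$ central, so $M = Mc \oplus M(1-c)$ is a direct sum of submodules and $cR(1-c) = 0$; by (i) this already yields $M(1-c) \in \storfree$. The crux is the identification $\B(M) = Mc$. As $c$ is central, $McR = Mc$, and since $\B(M) = M\B(R)$ it suffices to prove $\B(R) = cR$. The inclusion $\B(R) \subseteq cR$ is Theorem \ref{R_nonzero_radical}(i); for the reverse I would show $\B(R) \ess cR$: a nonzero relative complement $K$ of $\B(R)$ in $cR$ would, by Theorem \ref{R_nonzero_radical}(iii), satisfy $K \leq cR(1-c) = 0$, a contradiction, so no such $K$ exists. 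Theorem \ref{R_nonzero_radical}(ii) then upgrades $\B(R) \ess cR$ to $\B(R) = cR$, whence $\B(M) = M\B(R) = McR = Mc$ and $M = \B(M) \oplus M(1-c)$ with $M(1-c) \in \storfree$.

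Part (iii) should fall out of (ii): projectivity of $M$ gives $\B(M) = M\B(R)$ through Lemma \ref{prerad}, while $R$ semiprime guarantees every (left) semicentral idempotent is central, so the hypotheses of (ii) are met and $M = \B(M) \oplus M(1-c)$ displays $\B(M)$ as a direct summand (the degenerate case $\B(M) = M$ splits trivially). I expect the main obstacle to be precisely the identification $\B(R) = cR$ in (ii): Theorem \ref{R_nonzero_radical} supplies only $\B(R) \subseteq cR$, with a possible torsion-free projective ``gap'' $K$, and the genuine content is that centrality collapses this gap via $cR(1-c) = 0$. Everything else is bookkeeping with preradicals and with the square-zero ideal $cR(1-c)$.
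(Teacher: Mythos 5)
Your proposal is correct and follows essentially the same route as the paper: part (i) is the same chain $\B(M(1-c)) \leq \B(M)\cap M(1-c) \leq McR\cap M(1-c) = McR(1-c) \leq MJ(R) \leq J(M)$, part (ii) combines Theorem \ref{R_nonzero_radical} (ii) and (iii) with centrality of $c$ (which kills the complement $K \leq cR(1-c)=0$) to get $\B(R)=cR$, and (iii) specializes (ii). You merely make explicit two steps the paper leaves implicit — that $cR(1-c)$ is a square-zero ideal lying in $J(R)$, and exactly how (ii) and (iii) of the theorem yield $\B(R)=cR$ — which is a faithful filling-in rather than a different argument.
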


\begin{proof}
$(i)$ By Lemma \ref{prerad}, $\B (M(1-c)) \leq \B (M) \cap M(1-c) \leq McR \cap M(1-c) = McR(1-c) \leq M J(R) \leq J(M)$.  

$(ii)$ From Theorem \ref{R_nonzero_radical} (ii) and (iii), $\B (R) = cR$. Then $\B (M) = McR = Mc$ so $M(1-c) \in \storfree$.

$(iii)$ This part follows from $(ii)$.
\end{proof}

\begin{cor} \label{semicentral_reduced}If $R$ is semicentral reduced (e.g., $R$ is a prime ring) and $M$ is projective then either $\B (M) = M$ or $\B (M) = 0$.
\end{cor}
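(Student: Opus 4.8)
The plan is to reduce everything to the structural output of Theorem \ref{R_nonzero_radical} by splitting into the two cases according to whether $\B(M)$ fills up all of $M$. If $\B(M) = M$ there is nothing to prove, so I would concentrate on the case $\B(M) \neq M$ and show that this alternative forces $\B(M) = 0$. Before doing so, I would record the two facts that make the machinery apply: first, that $R$ being \emph{semicentral reduced} means precisely $\semicentral = \{0,1\}$, with the parenthetical prime case following because a left semicentral idempotent $e$ satisfies $(1-e)Re = 0$, whence primeness gives $e \in \{0,1\}$; and second, that since $M$ is projective we have $\B(M) = M\B(R)$ by Lemma \ref{prerad}, so the entire question is governed by the single submodule $\B(R)$.

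Granting the case $\B(M) \neq M$, I would invoke Theorem \ref{R_nonzero_radical}(i) to produce an idempotent $c \in \semicentral$ with $cR \neq R$ and $\B(R) \leq cR$. The semicentral reduced hypothesis then pins down $c$: it must be $0$ or $1$, and the relation $cR \neq R$ eliminates $c = 1$, leaving $c = 0$. Consequently $\B(R) \leq cR = 0$, so $\B(R) = 0$, and therefore $\B(M) = M\B(R) = 0$. Combining the two cases yields the dichotomy $\B(M) = M$ or $\B(M) = 0$.

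I do not expect a genuine computational obstacle here, since all of the substantive work has already been absorbed into Theorem \ref{R_nonzero_radical}. The only point requiring care is the bookkeeping on the idempotent $c$: one must confirm that $c$ really lies in $\semicentral$ (so that the semicentral reduced hypothesis is applicable to it) and that the strict inequality $cR \neq R$ is genuinely available to rule out $c = 1$. Both are delivered verbatim by the statement of Theorem \ref{R_nonzero_radical}(i), so once the case split and the identification $\semicentral = \{0,1\}$ are in place, the corollary follows immediately.
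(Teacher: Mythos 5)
Your proof is correct and matches the paper's argument, which simply declares the corollary an immediate consequence of Theorem \ref{R_nonzero_radical}(i): in the case $\B(M)\neq M$ that theorem supplies $c\in\semicentral$ with $cR\neq R$ and $\B(M)\leq McR$, and semicentral reducedness forces $c=0$. Your additional bookkeeping (the identification $\semicentral=\{0,1\}$, the primeness remark, and the use of Lemma \ref{prerad}) just makes explicit what the paper leaves implicit.
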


\begin{proof}
This result is an immediate consequence of Theorem \ref{R_nonzero_radical} (i).
\end{proof}

Even in the Goldie torsion theory, we do not see such a dichotomy as in Corollary \ref{semicentral_reduced} for projective modules. There exists $R_R$ indecomposable (hence semicentral reduced) where $0 \neq Z_2 (M) \subsetneq M$ (e.g., $M=R$ is the trivial extension of $\Z_4$ by $\Z$).


\begin{prop} Let $R = \left( \begin{smallmatrix} A & M \\ 0 & C \\ \end{smallmatrix} \right)$ where $C$ is semicentral reduced. Assume that for each $1 \neq a \in \mathcal{S}_l (A)$ there exists $m \in M$ such that $am \neq m$ (e.g., $_AM$ is faithful). Then:
\begin{enumerate}[(i)]

\item $\B (R) \neq R$ if and only if there exists $1 \neq e \in \mathcal{S}_l (A)$ such that $\B (R) \leq \left( \begin{smallmatrix} eA & eM \\ 0 & 0 \\ \end{smallmatrix} \right)$ if and only if $C$ is a Baer ring.

\item $\B (R) \subseteq \left( \begin{smallmatrix} eA & 0 \\ 0 & 0 \\ \end{smallmatrix} \right)$ where $1 \neq e \in \mathcal{S}_l (A)$ if and only if $\B (R) \cap \left( \begin{smallmatrix} 0 & M \\ 0 & C \\ \end{smallmatrix} \right) = \{0 \}$.

\item Assume for each $0 \neq ea \in eA$ there exists $k \in M$ such that $0 \neq eak$ (e.g., $_AM$ is faithful) where $1 \neq e \in \mathcal{S}_l (A)$. 
Let $\left( \begin{smallmatrix} 0 & X \\ 0 & 0 \\ \end{smallmatrix} \right) = 
\B (R) \cap \left( \begin{smallmatrix} 0 & M \\ 0 & C \\ \end{smallmatrix} \right)$. 
If $\B (R) = \left( \begin{smallmatrix} eA & eM \\ 0 & 0 \\ \end{smallmatrix} \right)$, then $X \neq 0$. If $X_C \ess eM_C$, then 
$\B (R) = \left( \begin{smallmatrix} eA & eM \\ 0 & 0 \\ \end{smallmatrix} \right)$.
\end{enumerate}
\label{compute_radical}
\end{prop}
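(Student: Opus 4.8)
The plan is to exploit the right $R$-module decomposition $R_R = e_1R \oplus e_0R$, where $e_1 = \left(\begin{smallmatrix} 1 & 0 \\ 0 & 0 \end{smallmatrix}\right)$ and $e_0 = \left(\begin{smallmatrix} 0 & 0 \\ 0 & 1 \end{smallmatrix}\right)$ are orthogonal idempotents, so $e_1R = \left(\begin{smallmatrix} A & M \\ 0 & 0 \end{smallmatrix}\right)$ and $e_0R = \left(\begin{smallmatrix} 0 & 0 \\ 0 & C \end{smallmatrix}\right)$. Since $\B$ is the radical of a torsion theory it is additive on finite direct sums, so $\B(R) = \B(e_1R) \oplus \B(e_0R)$. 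The first reduction is to compute $\B(e_0R)$: the right action on $e_0R$ factors through the projection $\pi\colon R \to C$, so $e_0R \cong C_C$, and a direct annihilator computation shows that $\left(\begin{smallmatrix} 0 & 0 \\ 0 & N \end{smallmatrix}\right)$ is s.Baer over $R$ exactly when $N$ is s.Baer over $C$ (an idempotent $g\in C$ generating $r_C(S)$ corresponds to $\left(\begin{smallmatrix} 1 & 0 \\ 0 & g \end{smallmatrix}\right)\in R$). Because $C$ is semicentral reduced and $C_C$ is projective, Corollary \ref{semicentral_reduced} applied to $C$ forces the s.Baer radical of $C_C$ to be $0$ or $C_C$; thus either $C$ is a Baer ring (so $e_0R$ is s.Baer and $\B(e_0R)=0$) or $C_C$ is $C$-torsion. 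In the latter case a short $\mathrm{Hom}$-argument closes the gap: any $R$-map from $e_0R$ into an s.Baer module $F$ has image a submodule of $F$ that is annihilated by $\left(\begin{smallmatrix} A & M \\ 0 & 0 \end{smallmatrix}\right)$, hence an inflated $C$-module that is s.Baer over $R$ and therefore over $C$, so it receives no nonzero map from the $C$-torsion module $C_C$. Hence $\B(e_0R)=e_0R$, and in all cases $\B(e_0R)=0 \iff C$ is Baer.

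For (i), the implication ``$C$ Baer $\Rightarrow \B(R)\neq R$'' is then immediate: $e_0R\neq 0$ is torsion-free, and since $\mathrm{Hom}_R(R_R,F)\cong F$, a nonzero torsion-free module excludes $\B(R)=R$. The implication ``$\B(R)\leq\left(\begin{smallmatrix} eA & eM \\ 0 & 0 \end{smallmatrix}\right) \Rightarrow C$ Baer'' follows because $R/\B(R)\in\storfree$ and $e_0R \cap \B(R) \leq \left(\begin{smallmatrix} eA & eM \\ 0 & 0 \end{smallmatrix}\right)\cap\left(\begin{smallmatrix} 0 & 0 \\ 0 & C \end{smallmatrix}\right)=0$, so $e_0R$ embeds in $R/\B(R)$ and is s.Baer by Lemma \ref{hered}. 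For the converse direction ``$C$ Baer $\Rightarrow$ the bound,'' note $\B(e_0R)=0$ gives $\B(R)=\B(e_1R)\leq e_1R$; applying Theorem \ref{R_nonzero_radical} to produce $c\in\semicentral$ with $\B(R)\leq cR$ and intersecting $cR$ with $e_1R$ lands $\B(R)$ inside $\left(\begin{smallmatrix} aA & aM \\ 0 & 0 \end{smallmatrix}\right)$, where $a=c_{11}\in\mathcal{S}_l(A)$ (using that the corner $\gamma=c_{22}\in\mathcal{S}_l(C)=\{0,1\}$ and that $\gamma=0$ forces $x=0$, $aM=0$). Assuming $M\neq 0$ one checks $a\neq 1$, giving the required $e=a$.

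The main obstacle is the remaining horn ``$\B(R)\neq R \Rightarrow C$ Baer.'' By the decomposition this reduces to showing that when $C$ is \emph{not} Baer the top summand $e_1R$ is torsion, i.e. $Fe_1=0$ for every s.Baer $F$. From a nonzero $w=we_1\in F$ one extracts $r_R(w)=hR$ with $h=\left(\begin{smallmatrix} h_1 & 0 \\ 0 & 1 \end{smallmatrix}\right)$, $h_1=h_1^2\in A$ (forced by $e_0\in r_R(w)$ and $w\neq 0$), whence $wR\cong(1-h)R=\left(\begin{smallmatrix} (1-h_1)A & (1-h_1)M \\ 0 & 0 \end{smallmatrix}\right)$ is s.Baer as a submodule of $F$. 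Reading off annihilators of its elements $\left(\begin{smallmatrix} 0 & (1-h_1)m \\ 0 & 0 \end{smallmatrix}\right)$ shows that $r_C(T)$ is idempotent-generated for every $T\subseteq(1-h_1)M$, i.e. $(1-h_1)M$ is an s.Baer right $C$-module. The difficulty—and where the hypothesis that every $1\neq a\in\mathcal{S}_l(A)$ satisfies $am\neq m$ for some $m$, together with the semicentral-reducedness of $C$, is essential—is to upgrade this to the full Baer property of $C$: one must guarantee that $(1-h_1)M$ is faithful (indeed captures a free rank-one) over $C$, so that an arbitrary bad annihilator $r_C(S)$ in a non-Baer $C$ is realized on a subset of $(1-h_1)M$, yielding the contradiction. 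I expect to carry this out by analyzing $r_R\big((1-h)R\big)=e'R$ with $e'\in\semicentral$ (Theorem \ref{R_nonzero_radical}) to identify $h_1$ with (an associate of) a left semicentral idempotent of $A$ and then transferring $r_C(S)$ through the $A$-action; this interlacing of the idempotent structure of $A$, its action on $M$, and annihilators in $C$ is the technical heart.

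Finally, (ii) and (iii) follow formally from (i) and the stability of the s.Baer torsion theory. For (ii), the nontrivial direction assumes $\B(R)\cap\left(\begin{smallmatrix} 0 & M \\ 0 & C \end{smallmatrix}\right)=0$; since $\left(\begin{smallmatrix} 0 & 0 \\ 0 & C \end{smallmatrix}\right)\neq 0$ lies in this intersection when $\B(R)=R$, we get $\B(R)\neq R$, so (i) gives $\B(R)\leq\left(\begin{smallmatrix} eA & eM \\ 0 & 0 \end{smallmatrix}\right)$; right-multiplying any $t\in\B(R)$ by $e_0$ keeps $te_0\in\B(R)$ (an ideal) while placing it in $\left(\begin{smallmatrix} 0 & M \\ 0 & 0 \end{smallmatrix}\right)\cap\B(R)=0$, killing the top-right entry and yielding $\B(R)\leq\left(\begin{smallmatrix} eA & 0 \\ 0 & 0 \end{smallmatrix}\right)$; the reverse inclusion is trivial. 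For (iii), $\B(R)=\left(\begin{smallmatrix} eA & eM \\ 0 & 0 \end{smallmatrix}\right)$ gives $X=eM$, and applying the hypothesis to $e=e\cdot 1\in eA$ produces $k$ with $ek\neq 0$, so $X=eM\neq 0$. For its converse, the same hypothesis shows $\left(\begin{smallmatrix} 0 & eM \\ 0 & 0 \end{smallmatrix}\right)\ess\left(\begin{smallmatrix} eA & eM \\ 0 & 0 \end{smallmatrix}\right)$ as right $R$-modules (right-multiplying an element of nonzero top-left $ea$ by $\left(\begin{smallmatrix} 0 & k \\ 0 & 0 \end{smallmatrix}\right)$ reaches $eak\neq 0$), while $X\ess eM$ over $C$ gives $\left(\begin{smallmatrix} 0 & X \\ 0 & 0 \end{smallmatrix}\right)\ess\left(\begin{smallmatrix} 0 & eM \\ 0 & 0 \end{smallmatrix}\right)$; chaining these, the torsion module $\B(R)\supseteq\left(\begin{smallmatrix} 0 & X \\ 0 & 0 \end{smallmatrix}\right)$ is essential in $\left(\begin{smallmatrix} eA & eM \\ 0 & 0 \end{smallmatrix}\right)$, so by Proposition \ref{torsion_class_extensions} the latter is torsion and equals $\B(R)$.
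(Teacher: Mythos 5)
Your overall architecture for part (i) — splitting $R_R = e_1R \oplus e_0R$ with $e_1 = \left(\begin{smallmatrix} 1 & 0 \\ 0 & 0 \end{smallmatrix}\right)$, $e_0 = \left(\begin{smallmatrix} 0 & 0 \\ 0 & 1 \end{smallmatrix}\right)$, computing $\B(e_0R)$ via the s.Baer theory over $C$, and closing a cycle of implications — is workable in principle, and your arguments for ``$C$ Baer $\Rightarrow \B(R)\neq R$'' and ``bound $\Rightarrow C$ Baer'' are correct and essentially the paper's. Parts (ii) and (iii) are also handled correctly (and (iii) matches the paper's essentiality-plus-stability argument). But there is a genuine gap in (i): the implication ``$\B(R)\neq R \Rightarrow C$ Baer'' (equivalently, $\Rightarrow$ the bound) is exactly the one you leave unfinished, and the strategy you sketch for it does not go through. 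You reduce it to showing that $(1-h_1)M$ is a \emph{faithful} s.Baer right $C$-module, so that a non-idempotent-generated $r_C(S)$ in a non-Baer $C$ is ``realized'' inside it. Nothing in the hypotheses forces this: the standing assumption only controls the action of left semicentral idempotents of $A$ on $M$, and $M_C$ (let alone $(1-h_1)M$) need not be faithful, torsionless, or contain a free rank-one $C$-submodule. So the ``technical heart'' you defer is not a routine verification but an obstruction to your route.

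The repair is to apply Theorem \ref{R_nonzero_radical}(i) with $M=R$ directly under the bare hypothesis $\B(R)\neq R$ (you only invoke it after already assuming $C$ is Baer). That theorem immediately produces $c\in\semicentral$ with $cR\neq R$ and $\B(R)\leq cR$. By the description of left semicentral idempotents of a generalized triangular matrix ring (\cite[Proposition 1.3]{ABT1}), one may take $c=\left(\begin{smallmatrix} e & 0 \\ 0 & f \end{smallmatrix}\right)$ with $e\in\mathcal{S}_l(A)$, $f\in\mathcal{S}_l(C)$ and $emf=mf$ for all $m\in M$; since $C$ is semicentral reduced, $f\in\{0,1\}$, and $f=1$ would give $em=m$ for all $m$, whence $e=1$ by the standing hypothesis and $c=1$, a contradiction. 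Thus $f=0$ and $cR=\left(\begin{smallmatrix} eA & eM \\ 0 & 0 \end{smallmatrix}\right)$, which is the bound; $C$ Baer then follows from your own step that $e_0R$ embeds in $R/\B(R)\in\storfree$. Two smaller points: your parenthetical ``$\gamma=0$ forces $x=0$'' is false (e.g. $\left(\begin{smallmatrix} 1 & x \\ 0 & 0 \end{smallmatrix}\right)$ is left semicentral for any $x$), although $cR=\left(\begin{smallmatrix} aA & aM \\ 0 & 0 \end{smallmatrix}\right)$ still holds because $x=ax\in aM$; and your appeal to Corollary \ref{semicentral_reduced} over the ring $C$ needs the remark that closure of $\sB$ under direct products passes from $R$-modules to $C$-modules (it does, via the embedding $N\mapsto\left(\begin{smallmatrix} 0 & 0 \\ 0 & N \end{smallmatrix}\right)$, but this should be said).
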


\begin{proof}
$(i)$ Assume $\B (R) \neq R$. From Theorem \ref{R_nonzero_radical} (i), there exists $c \in \semicentral$ with $c \neq 1$ such that $\B (R) \leq cR$. By \cite[Proposition 1.3]{ABT1}, there exists $e \in \mathcal{S}_l (A)$ and $f \in \mathcal{S}_l (C)$ such that $c = \left( \begin{smallmatrix} e & 0 \\ 0 & f \\ \end{smallmatrix} \right)$ and $emf = mf$ for all $m \in M$. Observe that $f \in \{ 0, 1 \} \subseteq C$. Assume that $f = 1$. Then $em = m$ for all $m \in M$. By hypothesis, $e = 1$, a contradiction. Hence $f = 0$, so $\B (R) \leq \left( \begin{smallmatrix} eA & eM \\ 0 & 0 \\ \end{smallmatrix} \right)$. This in turn implies that $\left( \begin{smallmatrix} 0 & 0 \\ 0 & C \\ \end{smallmatrix} \right) \in \storfree$, i.e., $C$ is a Baer ring. The last equivalence yields that $\B (R) \neq R$ since there exists a nonzero homomorphism $R \rightarrow R / \left( \begin{smallmatrix} A & M \\ 0 & 0 \\ \end{smallmatrix} \right) \iso \left( \begin{smallmatrix} 0 & 0 \\ 0 & C \\ \end{smallmatrix} \right)$.

$(ii)$ The proof is routine.

$(iii)$ Let $0 \neq t = \left( \begin{smallmatrix} ea & em \\ 0 & 0 \\ \end{smallmatrix} \right) \in 
\left( \begin{smallmatrix} eA & eM \\ 0 & 0 \\ \end{smallmatrix} \right)$. Assume that $\B (R) = 
\left( \begin{smallmatrix} eA & eM \\ 0 & 0 \\ \end{smallmatrix} \right)$. If $ea = 0$ then $ 0 \neq em \in X$. If $ea \neq 0$ there exists $k \in M$ such that $0 \neq eak \in X$. Now assume that $X_C \ess eM_C$. 
If $0 \neq em$ there exists $\gamma \in C$ such that $0 \neq em\gamma \in X$. Hence, $0 \neq t 
\left( \begin{smallmatrix} 0 & 0 \\ 0 & \gamma \\ \end{smallmatrix} \right) \in \B (R)$. If $0 = em$, there exists $n \in M$ such that $0 \neq ean \in eM$. Again there exists $\delta \in C$ such that $0 \neq ean\delta \in X$. So $0 \neq t \left( \begin{smallmatrix} 0 & n\delta \\ 0 & 0 \\ \end{smallmatrix} \right) \in \B (R)$. Consequently, $\B (R)_R \ess \left( \begin{smallmatrix} eA & eM \\ 0 & 0 \\ \end{smallmatrix} \right)_R$. By Proposition \ref{torsion_class_extensions}, $\B (R) = \left( \begin{smallmatrix} eA & eM \\ 0 & 0 \\ \end{smallmatrix} \right)$.
\end{proof}

In Proposition \ref{compute_radical}, if $\B (R) \neq 0$ and $_AM$ is faithful, then $\B (R) \nsubseteq \left( \begin{smallmatrix} eA & 0 \\ 0 & 0 \\ \end{smallmatrix} \right)$.

\begin{ex} \label{last_ex} The following are generalized triangular matrix rings, $R = \left( \begin{smallmatrix} A & M \\ 0 & C \\ \end{smallmatrix} \right)$ where $C$ is semicentral reduced, $_AM$ is faithful, $M_C$ is uniform, and $R$ is orthogonally finite or $R_R$ has the SSIP. Hence, $R$ satisfies all the hypotheses of Proposition \ref{compute_radical}.
\end{ex}
\begin{enumerate}[(i)]
\item $R = T_2 (A)$ where $A$ is a right uniform local ring with a nonzero nilpotent right ideal (e.g., $\Z_{p^n}$ for $n>1$). By the comment after Corollary \ref{FIsplitting}, $R = Z_2 (R) = \B (R)$. $R$ is orthogonally finite but, in general, $R_R$ does not have the SSIP.

\item Let $M = C$ be a division ring and $A$ a subring of $C$. By Corollary \ref{gen_triangular_matrix_cor} (i),  $\B (R) = 0$. $R$ is orthogonally finite and $R_R$ has the SSIP (see Lemma \ref{triangular_ssip_lemma}).

\item Let $C$ be a right Ore domain that is not a division ring, $M$ its classical right ring of quotients, and $A$ a subring of $C$. Then $\B (R) = \left( \begin{smallmatrix} A & M \\ 0 & 0 \\ \end{smallmatrix} \right)$. $R$ is orthogonally finite, and $R_R$ has the SSIP (see Lemma \ref{triangular_ssip_lemma}). $\stor$ is not hereditary since $M_C$ is s.Baer. Therefore, $\sB$ is not closed under essential extensions.
\end{enumerate}

\section*{Acknowledgements}
The fact that $R$ is not SSIP in Example \ref{counterexample} is due to a referee.

\section*{Preprint}
Preprint of an article published in [J. Algebra Appl. Volume 14, Issue 8, 2015] [DOI: 10.1142/S0219498815501315]  [\textcopyright World Scientific Publishing Company] [\url{http://www.worldscientific.com/worldscinet/jaa}]


\bibliographystyle{alpha}
\bibliography{jobname}

\end{document}